\newtheorem{theorem}{Theorem}
\newtheorem{corollary}{Corollary}
\newtheorem{proposition}{Proposition}
\newtheorem{lemma}{Lemma}
\theoremstyle{definition}
\newtheorem{definition}{Definition}
\theoremstyle{remark}
\newtheorem{remark}{Remark}
\theoremstyle{definition}
\newcommand{\Z}{\mathbb{Z}}
\newcommand{\C}{\mathbb{C}}
\newcommand{\Q}{\mathbb{Q}}
\newcommand{\T}{\mathbb{T}}
\newcommand{\R}{\mathbb{R}}
\newcommand{\B}{B}
\newcommand{\mc}{\mathcal}
\newcommand{\mb}{\mathbf}
\newcommand{\N}{\mathsf N}
\newcommand{\ZZ}{\mathsf Z}
\newcommand{\V}{\mathsf V}
\newcommand{\mbhat}[1]{\hat{\mb #1}}
\newcommand{\mbbar}[1]{\bar{\mb #1}}
\DeclareMathOperator{\supp}{supp}
\DeclareMathOperator{\Gal}{Gal}
\DeclareMathOperator{\Sp}{Sp}
\title{Fractal uncertainty for discrete 2D Cantor sets}
\author{Alex Cohen}
\thanks{Supported by a Fannie and John Hertz Foundation fellowship and an NSF GRFP fellowship. Partial support from NSF CAREER grant DMS-1749858 is acknowledged.}
\date{June 2022}
\begin{document}
\begin{abstract}
We prove that a self-similar Cantor set in $\mathbb{Z}_N \times \mathbb{Z}_N$ has a fractal uncertainty principle if and only if it does not contain a pair of orthogonal lines. The key ingredient in our proof is a quantitative form of Lang's conjecture in number theory due to Ruppert and Beukers \& Smyth. Our theorem answers a question of Dyatlov and has applications to open quantum maps. 
\end{abstract}

\maketitle
\section{Introduction}

\subsection{One dimensional fractal uncertainty}
The Bourgain-Dyatlov \cite{BourgainDyatlov} fractal uncertainty principle (FUP) says, in a precise quantitative sense, that a function $f: \R \to \C$ cannot simultaneously have large $L^2$ mass on a fractal set in physical space and large $L^2$ mass on a fractal set in Fourier space. This theorem and its variants have many applications to quantum chaos, see the survey article of Dyatlov \cite{DyatlovFUPSurvey}. The proof of FUP in \cite{BourgainDyatlov} is quite tricky, but the analagous result in the discrete setting has similar ingredients and is much simpler. 

First some notation. In this paper $\Z_N = \Z/N\Z$ refers to the integers mod $N$. 
We use the unitary discrete Fourier transform $\mc F: \ell^2(\Z_N) \to \ell^2(\Z_N)$, given by 
\begin{align*}
	\mc F f(\xi) &= \hat f(\xi) = \frac{1}{\sqrt{N}}\sum_{x \in \Z_N} f(x)e^{-\frac{2\pi i}{N}\xi x} \\ 
	\mc F^{-1} f(x) &= f^{\vee}(x) = \frac{1}{\sqrt{N}}\sum_{\xi \in \Z_N} f(\xi)e^{\frac{2\pi i}{N}\xi x}.
\end{align*}
We will also use the one dimensional and two dimensional tori $\T$, $\T^2$, which are identified as sets with $[0,1)$ and $[0, 1) \times [0,1)$.

Let us restrict attention to self-similar Cantor sets (when we say Cantor set we always mean self-similar). Fix a base $M$ and an alphabet $\mc A \subsetneq \Z_M$. Then let 
\begin{equation*}
	\mc X_k = \{a_0 + a_1M + \dots + a_{k-1}M^{k-1} \, :\, a_j \in \mc A\} \subset \Z_{M^k}
\end{equation*}
be the $k$th iterate. It will be convenient to let $N = M^k$, so $\mc X_k \subset \Z_N$. We say $\mc A$ has dimension $\delta_A = \log_M |\mc A|$, so $M^{k\delta} = |\mc X_k|$ for all $k$. Similarly, let $\mc Y_k$ be the Cantor iterates associated with the alphabet $\mc B$. 
Dyatlov \& Jin \cite{DyatlovJinBakersMaps} proved the following fractal uncertainty principle for discrete 1D Cantor sets. They were motivated by applications to open quantum maps, see \S\ref{sec:application_quantum_chaos} for more discussion.
\begin{theorem}[1D FUP \cite{DyatlovJinBakersMaps}*{Theorem 2}]\label{thm:1D_FUP}
For all alphabets $\mc A, \mc B \subsetneq \Z_M$, the estimate
\begin{equation}\label{eq:disc_fup}
	\|1_{\mc Y_k} \mc F 1_{\mc X_k}\|_{2 \to 2} \lesssim M^{-k\beta}
\end{equation}
holds for some $\beta > 0$. 
\end{theorem}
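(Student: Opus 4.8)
The plan is to derive \eqref{eq:disc_fup} from two ingredients: a sub\nobreakdash-multiplicativity estimate reflecting the self-similarity of the Cantor sets, and a ``base case'' ruling out exact saturation of the uncertainty bound at one fixed level. Write $h_j=\|1_{\mc Y_j}\mc F 1_{\mc X_j}\|_{2\to 2}\le 1$, where $\mc F$ now denotes the DFT on $\Z_{M^j}$ (so $h_k$ is exactly the left side of \eqref{eq:disc_fup}). I would establish: (i) $h_{k+\ell}\le h_k h_\ell$; and (ii) $h_{k_0}<1$ for some $k_0$. Given these, $h_{mk_0}\le h_{k_0}^m$, and writing $k=mk_0+r$ with $0\le r<k_0$ gives $h_k\le h_{k_0}^m\le h_{k_0}^{-1}\bigl(h_{k_0}^{1/k_0}\bigr)^k$, which is \eqref{eq:disc_fup} with $\beta=-\tfrac1{k_0}\log_M h_{k_0}>0$.

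For (i) I would use the Cooley--Tukey factorization of the DFT on $\Z_{M^{k+\ell}}$. Writing $x=x_0+M^\ell x_1$ with $x_0\in\Z_{M^\ell}$, $x_1\in\Z_{M^k}$, and $\xi=\xi_0+M^k\xi_1$ with $\xi_0\in\Z_{M^k}$, $\xi_1\in\Z_{M^\ell}$, a direct computation of the phase $e^{-2\pi i\xi x/M^{k+\ell}}$ gives
\[
  \mc F_{M^{k+\ell}} g\bigl(\xi_0+M^k\xi_1\bigr)
  = \mc F_{M^\ell}\Bigl[\,x_0\mapsto e^{-2\pi i\xi_0 x_0/M^{k+\ell}}\,
     \mc F_{M^k}\bigl[\,x_1\mapsto g(x_0+M^\ell x_1)\,\bigr](\xi_0)\,\Bigr](\xi_1).
\]
Now $\mc X_{k+\ell}=\{x_0+M^\ell x_1:x_0\in\mc X_\ell,\ x_1\in\mc X_k\}$ and $\mc Y_{k+\ell}=\{\xi_0+M^k\xi_1:\xi_0\in\mc Y_k,\ \xi_1\in\mc Y_\ell\}$; so if $g$ is supported in $\mc X_{k+\ell}$ then, for each fixed $x_0\in\mc X_\ell$, the slice $x_1\mapsto g(x_0+M^\ell x_1)$ is supported in $\mc X_k$ (and the slice vanishes identically when $x_0\notin\mc X_\ell$), whence the bracketed function of $x_0$ above is supported in $\mc X_\ell$, the unimodular ``twiddle factor'' $e^{-2\pi i\xi_0 x_0/M^{k+\ell}}$ being irrelevant to $\ell^2$ norms. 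Expanding $\|1_{\mc Y_{k+\ell}}\mc F g\|_2^2$ as $\sum_{\xi_0\in\mc Y_k}\sum_{\xi_1\in\mc Y_\ell}$, applying the definition of $h_\ell$ to the outer $\mc F_{M^\ell}$ and then $h_k$ to each inner $\mc F_{M^k}$, and using Parseval in $x_0$, yields $h_{k+\ell}\le h_k h_\ell$.

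For (ii) I would first normalize the alphabets by translation. Replacing $\mc A$ by $\mc A+t$ replaces $\mc X_k$ by $\mc X_k+t\tfrac{M^k-1}{M-1}$, and translating the input of $\mc F$ only multiplies the output by a unimodular phase, so $h_k$ is unchanged; translating $\mc B$ instead modulates the input, again leaving $h_k$ unchanged. Since $\mc A,\mc B\subsetneq\Z_M$, we may choose translates with $M-1\notin\mc A$ and $M-1\notin\mc B$, i.e.\ $\mc A,\mc B\subseteq\{0,\dots,M-2\}$. Suppose now $h_k=1$; since the supremum defining $h_k$ is attained in finite dimensions, there is a nonzero $g$ supported in $\mc X_k$ with $\widehat g$ supported in $\mc Y_k$. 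Encode $g$ by $P(z)=\sum_{x\in\mc X_k}g(x)z^x$, so $\widehat g(\xi)=M^{-k/2}P(\zeta^{-\xi})$ with $\zeta=e^{2\pi i/M^k}$; then $P$ vanishes at the $M^k-|\mc Y_k|$ distinct roots of unity $\zeta^{-\xi}$, $\xi\in\Z_{M^k}\setminus\mc Y_k$, so $\deg P\ge M^k-|\mc B|^k$. But $P$ has monomials only at exponents in $\mc X_k$, hence $\deg P\le\max\mc X_k=(\max\mc A)\tfrac{M^k-1}{M-1}\le\tfrac{M-2}{M-1}(M^k-1)$. Since $|\mc B|\le M-1$, the inequality $\tfrac{M-2}{M-1}(M^k-1)<M^k-(M-1)^k$ holds once $k>\tfrac{\log(M-1)}{\log M-\log(M-1)}$, a contradiction; so $P\equiv 0$, and $h_k<1$ for all such $k$. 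Fixing any such $k_0$ and invoking (i) completes the argument.

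The step requiring the most care is the Cooley--Tukey bookkeeping in (i): once the indices of $x$ and $\xi$ are split correctly so that the splittings of $\mc X_{k+\ell}$ and $\mc Y_{k+\ell}$ line up with it, the twiddle factors are unimodular and drop out for free, but arranging this alignment is the only delicate point. I should also flag that the translation normalization in (ii) is genuinely needed and easy to miss: if $0$ and $M-1$ both lie in $\mc A$ then $\max\mc X_k=M^k-1$ and the naive degree count fails, and establishing $h_{k_0}<1$ would instead demand a much harder analysis of which sparse polynomials can vanish on a prescribed set of roots of unity --- the sort of difficulty that, in the two-dimensional problem of the present paper, necessitates the quantitative form of Lang's conjecture.
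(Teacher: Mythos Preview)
Your overall framework---submultiplicativity plus a base case $h_{k_0}<1$---is exactly the reduction the paper uses, and your Cooley--Tukey argument for (i) is correct and matches the paper's \S\ref{sec:submul_proof}. The gap is in step (ii), in the translation normalization. Your claim that ``replacing $\mc A$ by $\mc A+t$ replaces $\mc X_k$ by $\mc X_k+t\tfrac{M^k-1}{M-1}$'' is only valid when $\mc A+t\subset\{0,\dots,M-1\}$ as \emph{integers}, i.e.\ with no wraparound in $\Z_M$. But the case you need---$\{0,M-1\}\subset\mc A$---is precisely one where no integer translate lands in $\{0,\dots,M-2\}$, so you must wrap around, and then the formula fails. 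Concretely: $M=3$, $\mc A=\{0,2\}$, $t=1$ gives $\mc A+1\equiv\{0,1\}\pmod 3$; one computes $\mc X_2(\{0,2\})=\{0,2,6,8\}$ and $\mc X_2(\{0,1\})=\{0,1,3,4\}$, whereas $\{0,2,6,8\}+4\equiv\{1,3,4,6\}\pmod 9$. The sets $\{0,1,3,4\}$ and $\{0,2,6,8\}$ are not translates of one another in $\Z_9$ (their difference multisets differ), so there is no reason for $h_k$ to be invariant under cyclic translation of the alphabet.

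The repair is to translate $\mc X_k$ itself in $\Z_{M^k}$ rather than $\mc A$ in $\Z_M$. Since some $c\notin\mc A$, the set $\mc X_k$ avoids the interval $[cM^{k-1},(c+1)M^{k-1})$; shifting by $-(c+1)M^{k-1}$ (which, as you correctly observe, leaves $h_k$ unchanged) puts the translate inside $[0,(M-1)M^{k-1})$, so $\deg P\le (M-1)M^{k-1}-1$. Your root count then needs $(M-1)M^{k-1}-1<M^k-(M-1)^k$, i.e.\ $(M-1)^k<M^{k-1}+1$, which still holds for all large $k$. The paper's own base case (Proposition~\ref{prop:1D_prop}) is the dual of this: instead of bounding $\deg P$ via a gap in $\mc X_k$, it constructs a degree-$(|\supp f|-1)$ multiplier killing all but one point of $\supp f$ and uses a length-$M^{k-1}$ gap in $\mc Y_k$ to force $|\supp f|>M^{k-1}$, contradicting $|\mc X_k|\le(M-1)^k$ under the same inequality on $k$.
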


Because of self-similarity, Cantor sets enjoy the following submultiplicativity estimate (see \S\ref{sec:submul_proof} for a proof)
\begin{equation}\label{eq:submul}
	\|1_{\mc Y_{r+k}} \mc F 1_{\mc X_{r+k}}\|_{2 \to 2} \leq \|1_{\mc Y_r} \mc F 1_{\mc X_r}\|_{2 \to 2} \|1_{\mc Y_k} \mc F 1_{\mc X_k}\|_{2 \to 2} 
\end{equation}
which reduces (\ref{eq:disc_fup}) to the problem of proving that for \textit{some} $k > 0$, 
\begin{equation*}
	\|1_{\mc Y_k} \mc F 1_{\mc X_k}\|_{2 \to 2} < 1.
\end{equation*}
This estimate holds if there is no nonzero function $f$ with $\supp f \subset \mc X_k$ and $\supp \hat f \subset \mc Y_k$. To recap, proving a 1D FUP reduces to showing that for some $k$, there is no function $f$ with $\supp f \subset \mc X_k$ and $\supp \hat f \subset \mc Y_k$. 

In the general case of arbitrary porous sets (not necessarily Cantor sets), submultiplicativiy is replaced by an induction on scales argument which allows one to find significant $L^2$ mass of $\hat f$ in the gaps of $\mc Y_k$ at every scale.

\subsection{Two dimensional fractal uncertainty}\label{sec:2d_fup_sec_intro}

In two dimensions, Cantor sets are determined by an alphabet $\mc A \subsetneq \Z_M^2$. We set 
\begin{equation}\label{eq:X_k_cantor_defn}
	\mc X_k = \{(a_0 + \dots + a_{k}M^{k-1}, b_0 + \dots + b_kM^{k-1})\, :\, (a_j, b_j) \in \mc A\} \subset \Z_{N}^2,
\end{equation}
where $N:= M^k$.
We have $|\mc A| = M^{\delta}$ with $0 < \delta < 2$, and $|\mc X_k| = M^{k\delta}$. The unitary Fourier transform in two dimensions is given by 
\begin{align*}
	\mc F f(\xi, \eta) = \hat f (\xi, \eta) &= \frac{1}{N} \sum_{(x,y) \in \Z_N^2} f(x,y)e^{-\frac{2\pi i}{N} (x\xi + y\eta)} \\ 
	\hat f(\mb \xi) &= \frac{1}{N} \sum_{\mb x \in \Z_N^2} f(\mb x) e^{-\frac{2\pi i}{N} \mb x \cdot \mb \xi}\quad \text{ in vector notation.}
\end{align*}
Submultiplicativity (\ref{eq:submul}) holds in two dimensions as well (see \S\ref{sec:submul_proof}), so proving a 2D FUP reduces to showing that for some $k$, there is no nonzero $f$ with $\supp f \subset \mc X_k$ and $\supp \hat f \subset \mc Y_k$. 

Unfortunately, this claim is not true in general. Indeed, 
\begin{equation*}
	f(x,y) = N^{-1/2}1_{y = 0} \text{ has } \hat f = N^{-1/2}1_{x = 0}
\end{equation*}
and fractal sets can contain vertical and horizontal lines. We show that the fractal sets generated by the alphabets $\mc A, \mc B$ containing a pair of orthogonal lines is the only obstruction to a 2D FUP. 
For $\mc A \subset \Z_M^2$ an alphabet, let 
\begin{align*}
	\mb A = \overline{\{(x,y) \in \T^2\, :\, (\lfloor Mx\rfloor, \lfloor My\rfloor) \in \mc A\}}.
\end{align*}
This is a closed drawing of $\mc A$ in $\T^2$, and we draw the Cantor iterate $\mc X_k$ as 
\begin{equation}\label{eq:kth_iterate_drawing}
	\mb X_k = \overline{\{(x,y) \in \T^2\, :\, (\lfloor M^kx\rfloor, \lfloor M^ky\rfloor) \in \mc X_k\}} \subset \T^2.
\end{equation}
We write $\mb X = \bigcap_k \mb X_k \subset \T^2$ as the limiting Cantor set, so
\begin{align*}
	\mb A &= \mb X_0 \supset \mb X_1 \supset \mb X_2 \supset \cdots \supset \mb X, \\
	\mb X &= \{(0.a_0a_1\dots, 0.b_0b_1\dots)\, :\, (a_j, b_j) \in \mc A \text{ for all } j \geq 0\} \text{ in base } M.
\end{align*}
Note that if $x \in \T$ is of the form $a/M^k$ then there are two possible decimal expansions---the point $(x, y) \in \T$ is in $\mb X$ if \textit{some} decimal expansion has all digits in the alphabet. 
For $\mc B$ a second alphabet we write $\mb B \subset \T^2$ as the drawing of $\mc B$ and $\mb Y \subset \T^2$ as the limiting Cantor set for $\mc B$. 
We need these closed sets to state the condition of our main theorem. 

\begin{theorem}[2D FUP]\label{thm:2D_FUP}
Suppose $\mc A, \mc B$ are alphabets. Then either 
\begin{equation}\label{eq:has_orthogonal_lines}
	\R \mb v + \mb p \subset \mb X \quad \text{and}\quad \R \mb v^{\perp} + \mb q \subset \mb Y
\end{equation}
for some $\mb v = (a,b) \in \R^2 - \{0\}$, $\mb p, \mb q \in \T^2$,
or if not then $\mc X_k$, $\mc Y_k$ satisfy
\begin{equation}\label{eq:2D_FUP_formula}
	\|1_{\mc Y_k} \mc F 1_{\mc X_k}\|_{2 \to 2} \lesssim M^{-k\beta}
\end{equation}
for some $\beta > 0$. 
\end{theorem}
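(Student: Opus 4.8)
The plan is to follow the reduction already set up in the introduction: by submultiplicativity it suffices to show that, when \eqref{eq:has_orthogonal_lines} fails, there exists some fixed $k$ for which no nonzero $f$ has $\supp f \subset \mc X_k$ and $\supp \hat f \subset \mc Y_k$. The key structural observation is that a function supported on $\mc X_k$ with Fourier transform supported on $\mc Y_k$ forces, after unwinding the self-similar tensor-like structure of the Cantor iterates, a rigid algebraic relation between $\mb A$ and $\mb B$ at \emph{every} scale. Concretely, I would show that if such an $f$ exists for all $k$, then one can extract, via a compactness/pigeonhole argument on the finitely many possible ``shapes'' of $\mc A$ and $\mc B$, an actual curve or an exponential-type identity relating points of $\mb X$ and $\mb Y$.

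First I would set up the ``no function'' condition as a linear-algebra statement: writing the Fourier transform on $\Z_{M^k}^2$ in terms of the base-$M$ digits, the condition $\supp \hat f \subset \mc Y_k$ becomes a system of vanishing character sums indexed by the digits of $\mc Y_k$, tested against $f$ supported on digit strings from $\mc A$. I would then argue that the existence of a nontrivial solution for all $k$ is equivalent to a projective algebraic variety — cut out by the monomials $z^a w^b$ with $(a,b)$ ranging over $\mc A$ minus a base point, and similarly for $\mc B$ — containing a curve of a controlled degree, or containing too many torsion points. This is exactly the setting where a \emph{quantitative} Lang conjecture applies: the Ruppert / Beukers--Smyth bounds say that a plane curve which is not a translate of a subgroup of $\mathbb{G}_m^2$ (i.e.\ not given by a binomial equation $\alpha z^a w^b = \beta$) contains only boundedly many roots of unity, with an explicit bound in terms of the degree. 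So if $\mc A$ were ``too big'' to be of binomial type at every scale, the number of torsion points would exceed $|\mc X_k|$ for large $k$, a contradiction.

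Next I would translate ``binomial type'' back into geometry: an equation $\alpha z^a w^b = \beta$ with $z = e^{2\pi i x}$, $w = e^{2\pi i y}$ says that $ax + by$ is constant mod $1$, i.e.\ the relevant digits of $\mc A$ all lie on a line of rational slope $-a/b$ (or a vertical line if $b=0$), and dually for $\mc B$. Running the scale induction, I would show that the only way to sustain a nonzero $f$ through all iterates is for $\mb X$ to contain a full line segment $\R\mb v + \mb p$ and for $\mb Y$ to contain the orthogonal line $\R \mb v^\perp + \mb q$ — the orthogonality coming from the pairing $\mb x \cdot \mb \xi$ in the Fourier phase, which forces the ``physical'' line and the ``frequency'' line to be perpendicular. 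This is precisely the excluded condition \eqref{eq:has_orthogonal_lines}, so its failure yields the desired $k$, and \eqref{eq:2D_FUP_formula} follows by submultiplicativity with $\beta = -\frac{1}{k}\log_M \|1_{\mc Y_k}\mc F 1_{\mc X_k}\|_{2\to 2} > 0$.

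The main obstacle I expect is the bookkeeping in the scale induction: the condition $\supp \hat f \subset \mc Y_k$ is not literally a single binomial equation but a nested family of them interacting across digit positions, so I must be careful that the quantitative Lang bound — which is a statement about a fixed curve of bounded degree — can be applied uniformly as $k \to \infty$. The resolution is that the \emph{alphabet} $\mc A$ is fixed, so the ambient curve has degree at most $M$ independent of $k$; the number of torsion points it can carry is therefore bounded by a constant $C(M)$, while $|\mc X_k| = M^{k\delta} \to \infty$, giving the contradiction for $k$ large enough that $M^{k\delta} > C(M)$. Making the passage between ``$f$ exists'' and ``the variety contains $\ge M^{k\delta}$ torsion points'' fully rigorous — in particular ruling out degenerate cases where $f$ is supported on a lower-dimensional sub-Cantor set, and handling the vertical/horizontal line case $b = 0$ separately — is where the real work lies.
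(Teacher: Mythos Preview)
Your proposal identifies the right ingredients --- submultiplicativity, Beukers--Smyth, lines, and the orthogonality forced by the Fourier pairing --- but the mechanism you describe for linking them does not work, and it is not the paper's mechanism.

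The central gap is your claim that ``the ambient curve has degree at most $M$ independent of $k$'' and therefore carries at most $C(M)$ torsion points, contradicting $|\mc X_k| = M^{k\delta} \to \infty$. You have not produced any such curve. The condition $\supp f \subset \mc X_k$, $\supp \hat f \subset \mc Y_k$ does not place $\mc X_k$ on a plane curve of bounded degree; your description of a variety ``cut out by the monomials $z^a w^b$ with $(a,b)$ ranging over $\mc A$'' is a high-dimensional monomial curve, not a plane curve to which Beukers--Smyth applies, and in any case the existence of $f$ does not force $\mc X_k$ to sit on it. So the contradiction you sketch has no source.

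What the paper actually does is the reverse: it does not try to fit $\mc X_k$ on a low-degree curve, but rather \emph{constructs a multiplier}. Given $f$ with $S = \supp f$, one builds (Lemma~\ref{lem:polynomials_through_sets}) a polynomial $F^*$ of degree $\lesssim |S|^{1/2}$ such that $S \setminus \ZZ_N(F^*)$ lies on a single irreducible line $\ell$ of small coefficients. The construction is recursive: pass a minimal-degree polynomial through $S$; if it is irreducible and not a line, Beukers--Smyth bounds $|S|$ by $22D^2$, so a degree-$(D-1)$ curve already captures a fixed fraction of $S$ and one recurses on the remainder. Setting $h(x,y) = F^*(e^{2\pi i x/N}, e^{2\pi i y/N})$ gives $g = hf$ nonzero, supported on $\ell \cap S$, with $\supp \hat g \subset \N_R(\supp \hat f)$ for $R \lesssim |S|^{1/2}$ (Lemma~\ref{lem:main_lemma}). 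From there one argues at a \emph{single} scale $N$ (Proposition~\ref{prop:main_FUP_prop}): since $g$ is supported on $\ell$, $\hat g$ has constant magnitude along dual lines $\Z\mb v + \mb \xi$; if $\mb Y$ avoids lines in direction $\mb v$ one gets $g = 0$ directly, and if not then $\mb X$ avoids lines in direction $\mb v^\perp$, and one reduces to the 1D argument (Proposition~\ref{prop:1D_prop}) applied to $g$ along $\ell$. There is no scale induction, no compactness extraction of a limiting line, and Beukers--Smyth is applied not to a fixed alphabet curve but inside the recursive construction of the multiplier.
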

In particular, if $\mb X$ does not contain any line then it has a FUP. We note that in this theorem, $(a,b)$ can be taken to be integers. Otherwise $a/b$ is irrational and the coset $\R \mb v + \mb p$ is dense in $\T^2$, so it cannot lie entirely in the closed set $\mb X \subsetneq \T^2$. The main outside ingredient we use is Theorem \ref{thm:beukers_smyth} due to Ruppert \cite{Ruppert}*{Corollary 5} and Beukers \& Smyth \cite{BeukersSmyth}*{Theorem 4}, see \S\ref{sec:main_lemma}. 

In \S\ref{sec:thm_sharp} we show that this theorem is sharp: if $\mb X$, $\mb Y$ contain a pair of orthogonal lines, FUP will fail. Notice that the condition of the theorem depends on the limiting Cantor sets $\mb X, \mb Y \subset \T^2$, and it is not immediately clear when alphabets $\mc A, \mc B$ generate Cantor sets satisfying this orthogonal line condition. The following proposition reduces this question to a finite combinatorial problem. 

\begin{proposition}\label{prop:lines_in_cantor_set}
A line $\R \mb v + \mb p$ lies on $\mb X$ if and only if $\R \mb v + M^k \mb p$ lies on $\mb A$ for all $k \geq 0$. Additionally, suppose $(a,b) \in \Z^2 - \{0\}$ is given, $a, b$ coprime. In order for there to be some $\mb p$ with $\R \mb v + \mb p \subset \mb X$, we must have $\max(|a|, |b|) \leq M$. 
\end{proposition}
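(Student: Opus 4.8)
The plan is to prove the two claims separately. For the first claim, I would unwind the self-similar structure. The key observation is that the map $(x,y) \mapsto (Mx \bmod 1, My \bmod 1)$ sends $\mathbf{X}$ into itself in a way compatible with the nested description $\mathbf{X}_k = \mathbf{A} \cap M^{-1}\mathbf{X}_{k-1}$ (interpreting the scaling appropriately on $\mathbb{T}^2$). More precisely, $(0.a_0a_1a_2\dots, 0.b_0b_1b_2\dots) \in \mathbf{X}$ iff every digit pair $(a_j,b_j)$ lies in $\mathcal{A}$, and applying $M\cdot$ modulo $1$ shifts the digit sequence, so $\mathbf{X}$ is invariant under this shift map. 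A line $\mathbb{R}\mathbf{v} + \mathbf{p}$ is preserved as a set by the linear map $x \mapsto Mx$ (it becomes $\mathbb{R}\mathbf{v} + M\mathbf{p}$), so $\mathbb{R}\mathbf{v}+\mathbf{p} \subset \mathbf{X}$ forces $\mathbb{R}\mathbf{v} + M\mathbf{p} \subset \mathbf{X} \subset \mathbf{X}_0 = \mathbf{A}$, and iterating gives $\mathbb{R}\mathbf{v} + M^k\mathbf{p} \subset \mathbf{A}$ for all $k$. Conversely, if $\mathbb{R}\mathbf{v} + M^k\mathbf{p} \subset \mathbf{A}$ for all $k$, then reading off the digits of points on the line shows every point of $\mathbb{R}\mathbf{v}+\mathbf{p}$ has all its digit pairs in $\mathcal{A}$, hence lies in $\mathbf{X}$; here I need to be a little careful about boundary points and the closure in the definition of $\mathbf{A}$, which is why the statement uses closed drawings.

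For the second claim, suppose $\mathbb{R}\mathbf{v} + \mathbf{p} \subset \mathbf{X}$ with $\mathbf{v} = (a,b) \in \mathbb{Z}^2 \setminus \{0\}$, $\gcd(a,b) = 1$. By the first part, $\mathbb{R}\mathbf{v} + \mathbf{q} \subset \mathbf{A}$ for a suitable $\mathbf{q}$ (in fact for $\mathbf{q} = M^k\mathbf{p}$, any $k$). Now $\mathbf{A}$ is a union of at most $|\mathcal{A}|$ closed axis-aligned squares of side $1/M$ tiling $\mathbb{T}^2$ on the grid $\frac{1}{M}\mathbb{Z}/\mathbb{Z}$. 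I want to count how many distinct grid squares the line segment of one period of $\mathbb{R}\mathbf{v}+\mathbf{q}$ must pass through. Lifting to $\mathbb{R}^2$, one period of the line is a segment from $\mathbf{q}$ to $\mathbf{q} + (a,b)$, i.e. it spans $a$ units horizontally and $b$ units vertically, passing through an $M\times M$ refinement of that $|a| \times |b|$ block of unit cells. A standard lattice-point/grid-crossing count shows such a segment meets at least $M\max(|a|,|b|)$ of the $\frac{1}{M}$-grid cells (it crosses at least $M|a| - 1$ vertical gridlines and $M|b|-1$ horizontal gridlines, and each crossing enters a new cell). Since all these cells must be among the $\le |\mathcal{A}| \le M^2 - 1 < M^2$ squares of $\mathbf{A}$, and more importantly since the cells lie in a single $\mathbb{T}^2$ of only $M^2$ cells total, we get $M\max(|a|,|b|) \le M^2$, hence $\max(|a|,|b|) \le M$.

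The main obstacle I anticipate is getting the grid-crossing count exactly right, including the edge cases where the line passes through a corner of the grid (hitting two new cells at once, or the closures of several cells meeting at a point) and the subtlety introduced by the closures in the definition of $\mathbf{A}$ and $\mathbf{X}_k$: a point on the boundary of a removed square still belongs to $\mathbf{A}$, so naively "the line avoids removed cells" is not quite the right phrasing. I would handle this by working with the open interiors of the removed squares and arguing that a line cannot lie entirely in the complement of a union of open squares unless it misses enough cells, or alternatively by perturbing $\mathbf{q}$ slightly along the line direction to a generic position where no corner is hit and the crossing count is clean, then using that $\mathbf{X}$ and $\mathbf{A}$ are closed and the line is connected. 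A clean way to organize the whole argument is: reduce mod $1$ so the line becomes a closed geodesic on $\mathbb{T}^2$ of "combinatorial length" $\max(|a|,|b|)$ in units of the coarse grid after one rescaling, observe it must be covered by the $|\mathcal{A}|$ cells of $\mathbf{A}$, and the geodesic visits at least $M\max(|a|,|b|)$ cells of the fine $\frac1M$-grid — forcing $\max(|a|,|b|) \le M$ since $\mathbb{T}^2$ has exactly $M^2$ such cells and actually only $M$ cells lie over each coarse cell in the relevant direction.
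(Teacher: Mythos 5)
Your treatment of the first claim is essentially the paper's: both directions come from the characterization $\mb x \in \mb X \Leftrightarrow M^k\mb x \in \mb A$ for all $k\ge 0$ together with the fact that $\mb x \mapsto M\mb x$ carries $\R\mb v+\mb p$ onto $\R\mb v+M\mb p$, and the closure issues you flag are exactly what the closed drawings are designed to absorb. That part is fine.

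The second claim is where you take a different route, and it has a genuine gap. Your lower bound of $M\max(|a|,|b|)$ cells is a count of distinct cells met by the \emph{lift} of one period, i.e.\ by the segment from $\mb q$ to $\mb q+(a,b)$ in $\R^2$. The pigeonhole step ``these cells lie in a single $\T^2$ with only $M^2$ cells, hence $M\max(|a|,|b|)\le M^2$'' requires the cells to remain distinct after reduction mod $\Z^2$, and they do not: in the very regime you need, $\max(|a|,|b|)>M$, the geodesic visits at most $M^2$ distinct cells of $\T^2$ for trivial reasons (for direction $(1,b)$ with $b>M$, within each column of width $\frac1M$ the coordinate $y$ sweeps the whole fiber, so the line revisits the same $M^2$ cells over and over while your $\R^2$ count is $Mb>M^2$). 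So no contradiction can be extracted from counting distinct cells on the torus. Moreover, even a correct cell count would not close the argument, for the reason you yourself note: touching the closed missing square is compatible with staying in $\mb A$, so one must show the line enters the \emph{open} interior of a cell not in $\mc A$. The paper handles both points at once with Lemma \ref{lem:denseness_fact}: a geodesic of coprime direction $(a,b)$ meets every fiber $\{y=y_0\}$ in a coset of $\frac1b\Z$ and every fiber $\{x=x_0\}$ in a coset of $\frac1a\Z$, hence passes within $\frac{1}{2\max(|a|,|b|)}$ (in $\ell^\infty$) of every point of $\T^2$; applied to the center of a missing $\frac1M\times\frac1M$ square, containment in $\mb A$ forces $\frac{1}{2\max(|a|,|b|)}\ge\frac{1}{2M}$, i.e.\ $\max(|a|,|b|)\le M$. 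Your counting idea can only be repaired by upgrading it to this kind of quantitative density statement along a single row or column, at which point it becomes the paper's argument.
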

Proposition \ref{prop:lines_in_cantor_set} leaves open a natural algorithmic question. Given an alphabet $\mc A$ and vector $\mb v \in \Z^2 - \{0\}$, does there exist a point $\mb p \in \T^2$ such that $\R \mb v + \mb p \subset \mb X$? An efficient algorithm for this problem would lead to an efficient algorithm for testing when two alphabets $\mc A, \mc B$ satisfy the conditions of Theorem \ref{thm:2D_FUP}. For the proof and more discussion see \S\ref{sec:pf_lines_in_cantor_prop}.

\begin{remark}
Theorem \ref{thm:2D_FUP} refines Dyatlov's Conjecture 6.7 from \cite{DyatlovFUPSurvey}. That conjecture recognizes the potential obstruction of $\mb X, \mb Y$ containing a pair of vertical / horizontal or diagonal / antidiagonal lines (the case $\max(|a|, |b|) \leq 1$ in Proposition \ref{prop:lines_in_cantor_set}), but does account for lines with other slopes, which may occur in practice. See Figure \ref{fig:cantor_with_line}.
\end{remark}

\begin{figure}
\centering
\begin{subfigure}[t]{.5\textwidth}
  \centering
  \includegraphics[width=0.9\linewidth]{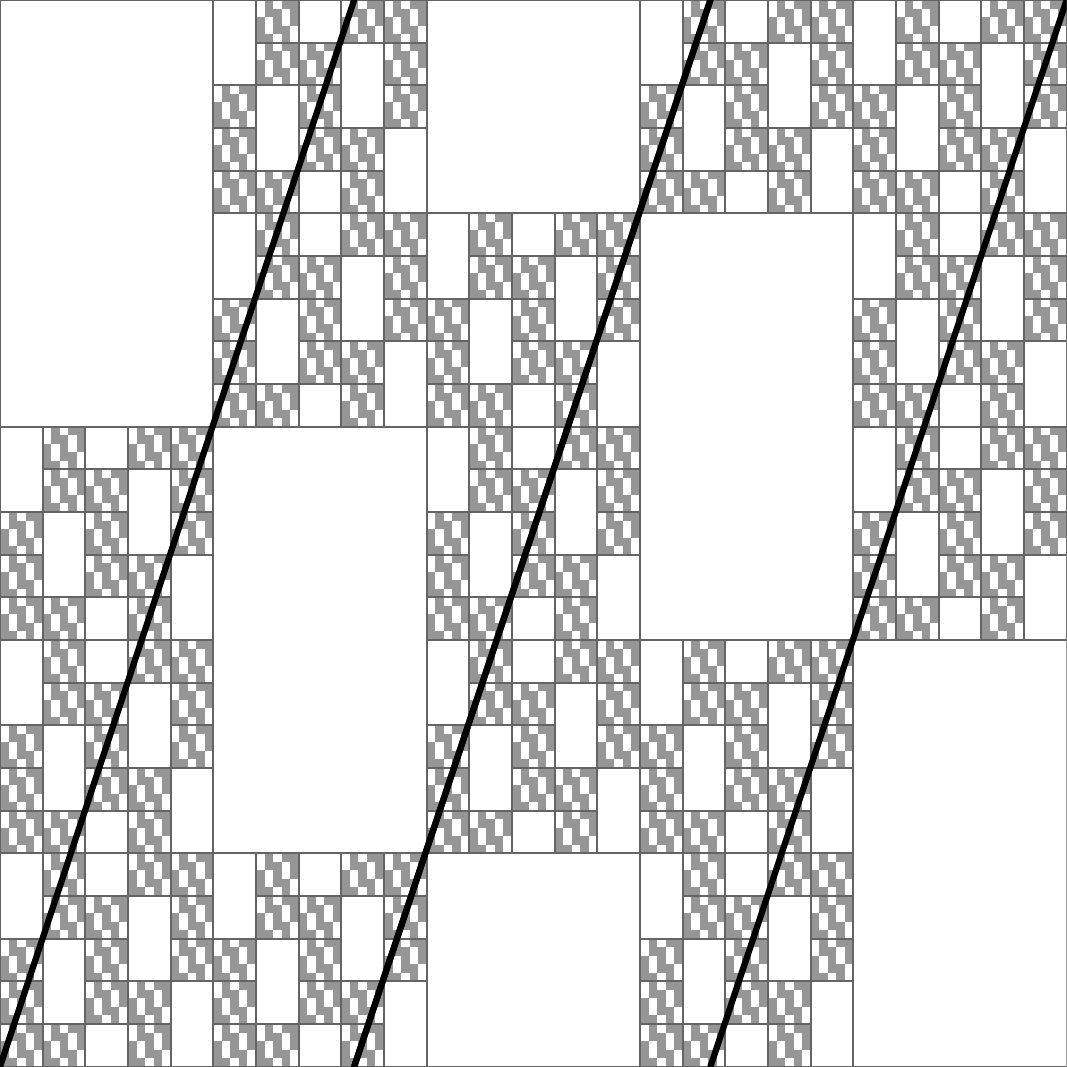}
  \caption{All iterates contain a skewed line}
\end{subfigure}%
\begin{subfigure}[t]{.5\textwidth}
  \centering
  \includegraphics[width=0.9\linewidth]{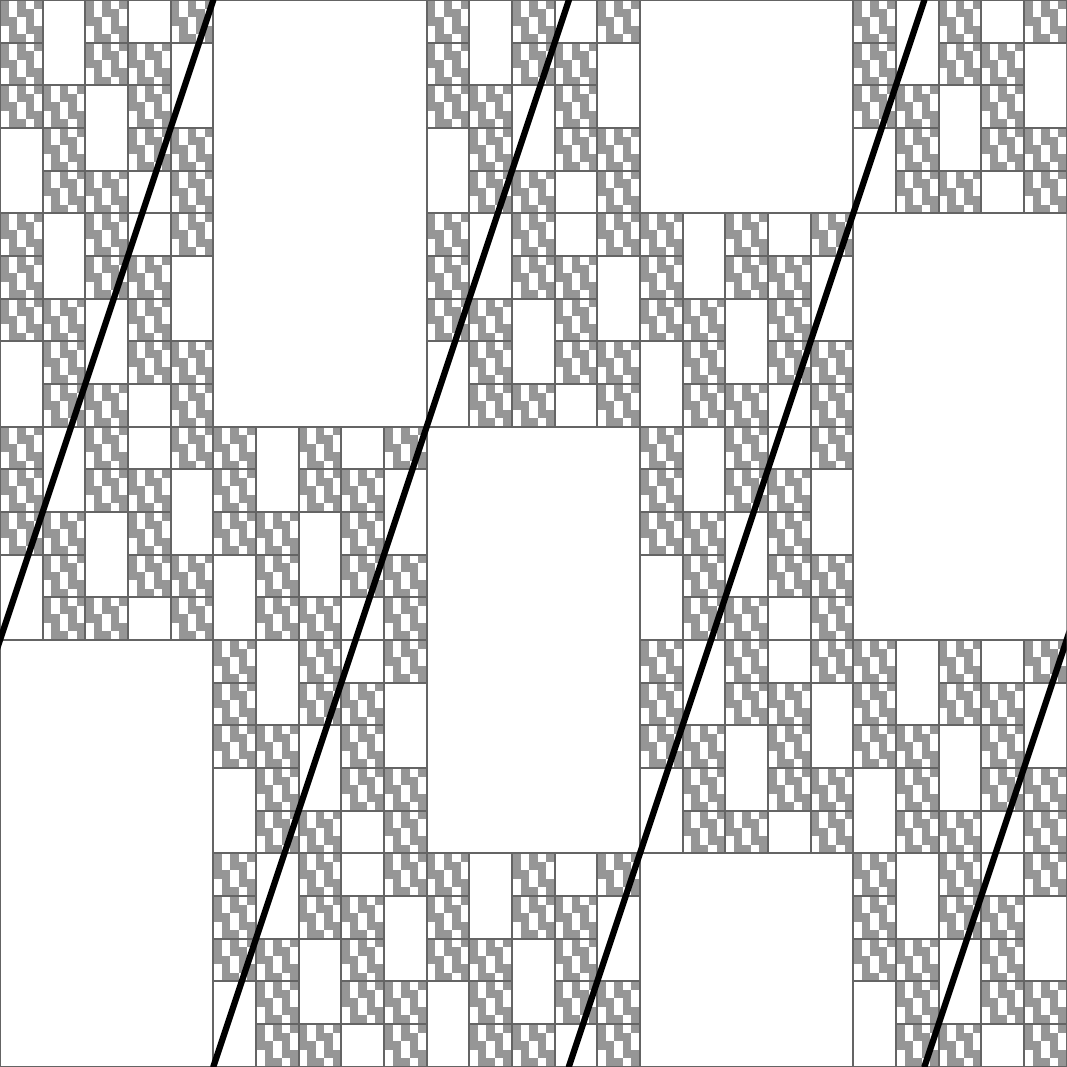}
  \caption{The first iterate contains a skewed line, but further iterates do not}
\end{subfigure}
\caption{Cantor sets can contain lines that aren't horizontal, vertical, or diagonal, but they are less stable}
\label{fig:cantor_with_line}
\end{figure}

Theorem \ref{thm:2D_FUP} is only interesting when $\frac{\delta_{A} +\delta_{B}}{2} \geq 1$. Indeed, Equation 6.8 from \cite{DyatlovFUPSurvey} says that (\ref{eq:2D_FUP_formula}) always holds with $\beta = \max\Bigl(0, 1 - \frac{\delta_{A} + \delta_{B}}{2}\Bigr)$. Combining Theorem \ref{thm:2D_FUP} with Proposition 6.8 from \cite{DyatlovFUPSurvey}, we can classify exactly which discrete 2D Cantor sets exhibit a fractal uncertainty principle. 

\begin{corollary}\label{cor:sharp_FUP_full_range}
Let $\mc A, \mc B$ be a pair of alphabets. Equation (\ref{eq:2D_FUP_formula}) holds for some $\beta > \max\Bigl(0, 1 - \frac{\delta_A + \delta_B}{2}\Bigr)$ if and only if 
\begin{itemize}
	\item $\delta_A + \delta_B \geq 2$ and the orthogonal line condition from Theorem \ref{thm:2D_FUP} holds, 
	\item $\delta_A + \delta_B \leq 2$ and for some $\mb j, \mb j' \in \mc A$, $\mb k, \mb  k' \in \mc B$, 
	\begin{equation*}
		\langle \mb j - \mb j', \mb k - \mb k'\rangle \neq 0 \text{ as an inner product in } \Z.
	\end{equation*}
\end{itemize}
\end{corollary}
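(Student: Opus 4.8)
The plan is to combine the two inputs the corollary explicitly names: Theorem \ref{thm:2D_FUP} for the regime $\delta_A+\delta_B \geq 2$, and Proposition 6.8 of \cite{DyatlovFUPSurvey} for the regime $\delta_A+\delta_B \leq 2$ (the two regimes overlap at $\delta_A+\delta_B=2$, which will need a small check). The key observation that ties them together is that the threshold $\beta_0 := \max(0, 1-\tfrac{\delta_A+\delta_B}{2})$ is exactly the exponent that \emph{always} holds for free (Equation 6.8 of \cite{DyatlovFUPSurvey}, quoted in the excerpt), so the content of the corollary is a dichotomy: either one can do strictly better than this trivial exponent, or one cannot do better at all. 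So I would organize the proof around the equivalence ``(\ref{eq:2D_FUP_formula}) holds for some $\beta > \beta_0$'' $\iff$ ``the stated combinatorial/geometric condition holds,'' handling the cases $\delta_A+\delta_B > 2$, $=2$, $<2$ in turn.

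\textbf{Case $\delta_A + \delta_B > 2$.} Here $\beta_0 = 0$, so the claim is simply: (\ref{eq:2D_FUP_formula}) holds with \emph{some} $\beta>0$ iff the orthogonal line condition of Theorem \ref{thm:2D_FUP} holds. The ``if'' direction is exactly Theorem \ref{thm:2D_FUP}. For ``only if,'' I would invoke the sharpness half of the paper: \S\ref{sec:thm_sharp} shows that if $\mb X, \mb Y$ contain a pair of orthogonal lines then FUP fails, i.e. $\|1_{\mc Y_k}\mc F 1_{\mc X_k}\|_{2\to 2} \not\lesssim M^{-k\beta}$ for any $\beta>0$ — so contrapositively, if (\ref{eq:2D_FUP_formula}) holds for some $\beta>0$, the orthogonal line condition must hold. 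One should note that for $\delta_A+\delta_B\ge 2$ the second bullet of the corollary is subsumed: since $\mc A \subsetneq \Z_M^2$ generates a set of positive dimension, it automatically contains two points $\mb j \neq \mb j'$, and similarly for $\mc B$; if the orthogonal line condition fails we are in the FUP regime and there is nothing to prove, while if it holds one still needs $\langle \mb j - \mb j', \mb k-\mb k'\rangle \ne 0$ to be available — I would check that the hypotheses of the first bullet already force (or are compatible with) this, or simply phrase the first bullet as the operative condition when $\delta_A+\delta_B\ge 2$ as the corollary statement does.

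\textbf{Case $\delta_A+\delta_B < 2$.} Here $\beta_0 = 1 - \tfrac{\delta_A+\delta_B}{2} > 0$. The relevant input is Proposition 6.8 of \cite{DyatlovFUPSurvey}, which I would cite to the effect that one can beat the exponent $\beta_0$ precisely when $\mc A$ and $\mc B$ are ``not orthogonally degenerate'' in the sense of the second bullet: there exist $\mb j,\mb j'\in\mc A$ and $\mb k,\mb k'\in\mc B$ with $\langle \mb j-\mb j', \mb k-\mb k'\rangle \neq 0$ in $\Z$. (The point being that if $\langle \mb j-\mb j',\mb k-\mb k'\rangle = 0$ for \emph{all} such quadruples, then after translating $\mc A, \mc B$ so $0\in\mc A\cap\mc B$, the sets $\mc A$ and $\mc B$ live in orthogonal complementary subspaces, the Cantor sets split as products, and the operator norm factors as a product of 1D norms realizing exactly the exponent $\beta_0$ and no better; conversely the nondegeneracy lets one gain.) This is a direct quotation of the companion proposition, so the proof here is ``apply \cite{DyatlovFUPSurvey}*{Proposition 6.8}.''

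\textbf{The boundary $\delta_A+\delta_B = 2$} needs a word: both bullets nominally apply and $\beta_0=0$. The first bullet is the operative one (it is Theorem \ref{thm:2D_FUP} plus \S\ref{sec:thm_sharp} as above); I would check that when the orthogonal line condition holds at $\delta_A+\delta_B=2$, the second-bullet inequality is also automatically satisfiable (two distinct points in each alphabet exist, and if all pairwise inner products vanished the Cantor sets would be contained in a finite union of orthogonal lines, forcing $\delta_A,\delta_B\le 1$ with equality and putting us in a product situation that one checks directly), so the two descriptions agree on the overlap and the corollary is consistent. \textbf{The main obstacle} I anticipate is precisely this bookkeeping at and near $\delta_A+\delta_B=2$: making sure the geometric condition (orthogonal lines in the limit sets $\mb X,\mb Y$) and the combinatorial condition (a single non-orthogonal pair of difference vectors in $\mc A\times\mc B$) are stitched together without a gap or overlap inconsistency, and verifying that the ``trivial exponent'' $\beta_0$ is genuinely \emph{not} beatable in the degenerate product case — this last non-improvability is really where one uses that the 1D operator norm $\|1_{\mc Y_k}\mc F 1_{\mc X_k}\|_{2\to 2}$ in a genuinely one-dimensional factor decays no faster than the dimension bound allows, which I would extract from the sharpness discussion in \S\ref{sec:thm_sharp} together with the product structure. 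Everything else is a citation.
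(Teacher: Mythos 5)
Your proposal is correct and follows essentially the same route as the paper: the corollary is obtained by citing Theorem \ref{thm:2D_FUP} (together with the sharpness construction of \S\ref{sec:thm_sharp} for the converse) when $\delta_A+\delta_B\ge 2$, citing Proposition 6.8 of \cite{DyatlovFUPSurvey} when $\delta_A+\delta_B\le 2$, and then verifying that the two characterizations coincide on the overlap $\delta_A+\delta_B=2$ (where the paper notes that failure of FUP forces $\delta_A=\delta_B=1$ and $\mc A,\mc B$ to be one of the two explicit orthogonal-line pairs). The consistency check at the boundary that you flag as the main obstacle is exactly the point the paper addresses in the paragraph following the corollary.
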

The second condition above is a different sort of orthogonal line condition from the first. Although it is not initially obvious, the two conditions are the same when $\delta_A + \delta_B = 2$. Indeed, this must be the case, because both conditions are if and only if statements. If $\delta_A + \delta_B = 2$ and $\mc A, \mc B$ do not obey a FUP, then $\delta_A = \delta_B = 1$ and 
\begin{align*}
	\mc A &= \{(x_0,t)\, :\, t \in \Z_M\} \text{ and } \mc B = \{(t,y_0)\, :\, t \in \Z_M\} \text{ for some } x_0, y_0 \in \Z_M, \text{ or } \\ 
	\mc A &= \{(t,t)\, :\, t \in \Z_M\} \text{ and } \mc B = \{(t, M-1-t)\, :\, t \in \Z_M\}, 
\end{align*}
or the reverse of these. Indeed, if $\delta_A < 1$ then $\mb X$ is less than one dimensional and it cannot contain any line, so Theorem \ref{thm:2D_FUP} applies. If $\delta_A = \delta_B = 1$ then $|\mc A| = |\mc B| = N$, and $\mc A - \mc A$, $\mc B - \mc B$ must both lie on one dimensional cosets as subsets of $\Z^2$. This can only be true in one of the two cases listed above. 

\subsection{Sketch of the argument}
Suppose $f: \Z_N^2 \to \C$ has $\supp f = S$, $\supp \hat f = T$. Our argument shows that if $S$ avoids lines in a robust sense, then $|T| \gtrsim N^2$. Proposition \ref{prop:supp_size_cor} is a realization of this heuristic. 

We start by writing functions on $\Z_N^2$ with Fourier support in $[0,D]^2$ as a trigonometric polynomial on $\T^2 \subset \C^2$ with degree $\leq D$. We gain two things from using polynomials: unique factorization and Bezout's theorem on the intersection of zero loci. The heart of the argument is constructing a trigonometric polynomial
\begin{equation}\label{eq:trig_poly_g}
	h(x,y) = \sum_{0 \leq k, l \leq D} a_{kl} z^k w^l,\quad z = e^{\frac{2\pi i}{N} x}, w = e^{\frac{2\pi i}{N} y},\quad D \lesssim \sqrt{|T|}
\end{equation}
which vanishes on all of $T$ except one line (and does not vanish on all of $T$). Then $h \hat f$ is nonzero and supported along a line, so $(h \hat f)^{\vee}$ has constant magnitude along dual lines. We have $(h \hat f)^{\vee} = h^{\vee} * f$, so 
\begin{equation*}
	\supp (h \hat f)^{\vee} \subset S - [0,D] \times [0, D].
\end{equation*}
Thus $S - [0,D] \times [0,D]$ contains \textit{some} dual line, and combining this fact with the structural condition on $S$ implies $D \gtrsim N$. Thus $|T| \gtrsim N^2$. Because we end up analyzing the function $h \hat f$, $h$ is called a \textit{multiplier}. 

It is useful to consider a hypothetical scenario: what if $T$ is the vanishing set of some low degree trigonometric polynomial in $\Z_N^2$, e.g. 
\begin{equation*}
	T = \{(x,y) \in \Z_N^2\, :\, z^2 + 4zw + w = 1\},\quad  z = e^{\frac{2\pi i}{N} x}, w = e^{\frac{2\pi i}{N} y}. 
\end{equation*}
Bezout's inequality (Theorem \ref{thm:bezout_theorem}) states that any trigonometric polynomial $h$ can only vanish on at most $4D$ points of $T$, or it must vanish on all of $T$. So any multiplier as in (\ref{eq:trig_poly_g}) would have degree $\sim |T| \gg \sqrt{|T|}$, obstructing our strategy if $|T|$ is large.

Luckily, Theorem \ref{thm:beukers_smyth} of Ruppert \cite{Ruppert}*{Corollary 5} and Beukers \& Smyth \cite{BeukersSmyth}*{Theorem 4.1} excludes this possibility. They prove that the vanishing set of a degree $D$ trigonometric polynomial in $\Z_N^2$ either has order $\leq 22D^2$, or contains a line. Concretely, with $T$ defined as above, $|T| \leq 88$ for all $N$. This theorem gives a sharp quantitative form to Lang's conjecture, which is a qualitative statement about cyclotomic roots of polynomials in $\C^n$---see \S\ref{sec:main_lemma} for more details. Lemma \ref{lem:main_lemma} encapsulates this number theoretic input as it applies to our result. 

\subsection{An application to quantum chaos}\label{sec:application_quantum_chaos}
Dyatlov and Jin \cite{DyatlovJinBakersMaps} initially introduced Theorem \ref{thm:1D_FUP} to prove results in quantum chaos. In particular, they used Theorem \ref{thm:1D_FUP} to prove a class of one dimensional \textit{quantum open baker's maps}, a discrete model for open quantum maps, always have a spectral gap. Adapting their pipeline we can use our Theorem \ref{thm:2D_FUP} to prove a large class of two dimensional quantum open baker's maps have a spectral gap. 

\subsubsection*{One dimensional baker's maps}
First we will review the one dimensional situation as discussed in \cite{DyatlovJinBakersMaps}. The quantum open baker's maps in consideration are parametrized by triples 
\begin{equation*}
	(M, \mc A, \chi),\quad M \in \Z_{> 0},\quad \mc A \subsetneq \Z_M,\quad \chi \in C_0^{\infty}((0,1); [0,1]). 
\end{equation*}
Here $M$ is the base, $\mc A$ is the alphabet, and $\chi$ is the cutoff function. For any $N \geq 1$, let $\chi_N \in \ell^2(\Z_N)$ be given by $\chi_N(x) = \chi(x/N)$. For each $k \geq 1$ the corresponding quantum open baker's map is the operator on $\ell^2(\Z_N)$, $N=M^k$, given by 
\begin{align*}
	B_N &= \mc F_N^* \begin{pmatrix}
		\chi_{N/M} \mc F_{N/M} \chi_{N/M} & & \\
		& \ddots & \\
		& & \chi_{N/M} \mc F_{N/M} \chi_{N/M}
	\end{pmatrix}
	I_{\mc A, M}. \\ 
\end{align*}
Here $I_{\mc A, M}$ is the $N\times N$ diagonal matrix with $k$-th diagonal entry equal to 1 if $\lfloor \frac{\ell}{N/M}\rfloor \in \mc A$ and 0 otherwise, and $\chi_{N/M} \mc F_{N/M} \chi_{N/M}$ is an $(N/M)\times (N/M)$ block matrix given by the corresponding operator on $\ell^2(\Z_{N/M})$. It is convenient to introduce the projection operator 
\begin{align*}
	&\Pi_a: \ell^2(\Z_N) \to \ell^2(\Z_{N/M}),\quad a \in \Z_M,\\
	&\Pi_a u(j) = u\Bigl(j + a\frac{N}{M}\Bigr).
\end{align*} 
Then 
\begin{align*}
	B_N = \sum_{a \in \mc A} B_N^a,\quad B_N^a := \mc F_N^* \Pi_a^* \chi_{N/M} \mc F_{N/M} \chi_{N/M} \Pi_a. 
\end{align*}
Let $\mc X_k \subset \Z_{M^k}$ denote the Cantor iterates of $\mc A$ as before. 
The following proposition relates the fractal uncertainty principle to spectral gaps for $B_N$. 
\begin{proposition}[\cite{DyatlovJinBakersMaps}*{Proposition 2.6}]\label{prop:dyatlov_jin_FUP_specgap}
Suppose 
\begin{equation}\label{eq:FUP_eqn_for_FUP_implies_specgap}
	\|1_{\mc X_k} \mc F 1_{\mc X_k}\|_{2\to 2} \leq C_{\beta}M^{-k \beta}\quad \text{ for all } k.
\end{equation}
Then 
\begin{equation}\label{eq:spec_gap}
	\limsup_{N\to \infty} \max\{|\lambda|\, :\, \lambda \in \Sp(B_N)\} \leq M^{-\beta},
\end{equation}
where $\Sp(B_N)$ is the spectrum.
\end{proposition}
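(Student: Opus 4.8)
The plan is to bound the spectral radius $\mathrm{spr}(B_N)$, which by Gelfand's formula equals $\inf_{m\ge1}\|B_N^m\|_{2\to2}^{1/m}$, by estimating the single power $B_N^{k}$ with $N=M^k$ and comparing it to the fractal uncertainty quantity $\|1_{\mc X_k}\mc F 1_{\mc X_k}\|_{2\to2}$ appearing in the hypothesis \eqref{eq:FUP_eqn_for_FUP_implies_specgap}.

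The core step I would carry out is a ``baker's map identity'' unwinding $B_N^k$. Expanding $B_N^k=\sum_{\mathbf{a}\in\mc A^k}B_N^{a_1}\cdots B_N^{a_k}$ and using that the block projections $\Pi_a$ have pairwise orthogonal ranges, so that the product over a fixed word collapses cleanly, one commutes each $\Pi_{a_j}$ past the outer $\mc F_N^*$. Each such move converts the full Fourier transform into the Fourier transform at scale smaller by a factor of $M$, at the price of an explicit unimodular phase; after $k$ steps the innermost transform is $\mc F_1=\mathrm{Id}$, and reassembling the pieces shows that $B_N^k$ equals, up to conjugation by a unitary and an explicit phase, an operator of the form $\Psi_k\,\mc F_N\,\Phi_k$, where $\Phi_k$ and $\Psi_k$ are $k$-fold products of rescaled copies of the cutoff $\chi$. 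Because $\chi$ takes values in $[0,1]$ and is supported strictly inside $(0,1)$, these are multiplication operators of norm $\le 1$ supported inside the $k$-th Cantor iterate $\mc X_k$, so $\Phi_k=1_{\mc X_k}\Phi_k$ and $\Psi_k=\Psi_k 1_{\mc X_k}$.

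Given this, submultiplicativity of the operator norm yields $\|B_N^k\|_{2\to2}=\|\Psi_k\,\mc F_N\,\Phi_k\|_{2\to2}\le\|1_{\mc X_k}\mc F_N 1_{\mc X_k}\|_{2\to2}$, and then \eqref{eq:FUP_eqn_for_FUP_implies_specgap} gives $\|B_N^k\|_{2\to2}\le C_\beta M^{-k\beta}$. Hence $\mathrm{spr}(B_N)\le\|B_N^k\|_{2\to2}^{1/k}\le C_\beta^{1/k}M^{-\beta}$, and letting $N=M^k\to\infty$ (so $k\to\infty$) gives $\limsup_{N\to\infty}\max\{|\lambda|:\lambda\in\Sp(B_N)\}\le M^{-\beta}$, which is \eqref{eq:spec_gap}.

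The main obstacle is entirely in the second paragraph: making the baker's map identity precise — bookkeeping the phase factors produced each time a block projection is moved past $\mc F_N^*$, checking that they assemble into a bona fide unitary conjugation (this is where the orthogonality of the $\Pi_a$-ranges is used, so that the $|\mc A|^k$ summands do not proliferate), and verifying that the composition of the $k$ rescaled copies of $\chi$ is genuinely supported inside $\mc X_k$ uniformly in $k$ and $N$. Once this algebraic identity is in hand, the passage to the spectral gap is the short computation above.
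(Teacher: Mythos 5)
The final step of your argument---Gelfand's formula applied to a norm bound on a power of $B_N$---is indeed how the proof in \cite{DyatlovJinBakersMaps} concludes. But the ``baker's map identity'' in your second paragraph is not a bookkeeping matter to be filled in later: it is false as stated, and repairing it is the entire content of the proposition. The obstruction is that $\Pi_a\mc F_N^*$ does not factor as a unimodular phase times a rescaled Fourier transform. Writing $\ell^2(\Z_N)\cong\ell^2(\Z_M)\otimes\ell^2(\Z_{N/M})$, one has $\mc F_N=(\mc F_{\Z_M}\otimes\mathrm{Id})\circ D\circ(\mathrm{Id}\otimes\mc F_{\Z_{N/M}})$, where $D$ is the phase (``twiddle factor'') coupling the most significant position digit to the least significant frequency digit---this is exactly the identity used in \S\ref{sec:submul_proof} for submultiplicativity. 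Because of $D$ and this digit reversal, the cross terms $\Pi_a\mc F_N^*\Pi_{a'}^*$ with $a\neq a'$ do not vanish, the sum over words $\mathbf a\in\mc A^k$ does not collapse, and $B_N^k$ is not unitarily conjugate to $\Psi_k\,\mc F_N\,\Phi_k$ with $\Phi_k,\Psi_k$ multiplication operators. A further sign that no exact identity can hold at full depth $k$: the innermost cutoff would be $\chi$ sampled on $\Z_1$, i.e.\ the single value $\chi(0)=0$ since $\chi\in C_0^\infty((0,1))$, so your $\Phi_k$ would be the zero operator, contradicting the fact that $B_N$ generically has nonzero spectrum.

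The actual argument (Propositions 2.3--2.6 of \cite{DyatlovJinBakersMaps}, whose two-dimensional adaptation is sketched in \S\ref{sec:proof_application_bakers}) replaces the identity by an approximate localization. Non-stationary phase (the propagation-of-singularities statement, Proposition \ref{prop:propagation_singularities}) shows that $(1-1_{\mc X_n(\epsilon)})B_N^n$ and its Fourier-side analogue have norm $\mc O(N^{-\infty})$, but only for $n\le\rho k$ with $\rho<1$: the smooth cutoffs need room between the scale $M^{-n}$ of the $n$-th Cantor iterate and the discretization scale $N^{-1}=M^{-k}$. This yields $\|B_N^{2n}\|_{2\to2}\le\|1_{\mc X_n(\epsilon)}\mc F_N 1_{\mc X_n(\epsilon)}\|_{2\to2}+\mc O(N^{-\infty})$, which is controlled by hypothesis (\ref{eq:FUP_eqn_for_FUP_implies_specgap}) via submultiplicativity; Gelfand's formula then gives $\limsup_N \max\{|\lambda|:\lambda\in\Sp(B_N)\}\le M^{-\rho\beta}$, and one lets $\rho\to1$. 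The $\limsup$ in the conclusion and the disappearance of the constant $C_\beta$ reflect precisely these two limits, which your proposed exact-identity route would have had no need for.
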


Combining Proposition \ref{prop:dyatlov_jin_FUP_specgap} with Theorem \ref{thm:1D_FUP}, Dyatlov and Jin obtain a spectral gap for our quantum open bakers maps. 
\begin{theorem}[\cite{DyatlovJinBakersMaps}*{Theorem 1}]\label{thm:1D_spec_gap_theorem}
There exists $\beta = \beta(M, \mc A) > 0$ such that 
\begin{equation*}
	\limsup_{N\to \infty} \max\{|\lambda|\, :\, \lambda \in \Sp(B_N)\} \leq M^{-\beta},
\end{equation*}
$\Sp(B_N)$ is the spectrum.
\end{theorem}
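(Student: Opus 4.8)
The plan is simply to combine the two results just stated, since Theorem \ref{thm:1D_spec_gap_theorem} is the direct output of feeding the 1D fractal uncertainty principle into the FUP-implies-spectral-gap mechanism. First I would apply Theorem \ref{thm:1D_FUP} with the second alphabet taken equal to the first, $\mc B = \mc A$, so that $\mc Y_k = \mc X_k$. This produces a constant $\beta = \beta(M,\mc A) > 0$ and an implied constant $C_\beta$ for which
\[
	\|1_{\mc X_k}\mc F 1_{\mc X_k}\|_{2\to 2} \leq C_\beta M^{-k\beta}\quad\text{for all } k \geq 1,
\]
which is exactly the hypothesis (\ref{eq:FUP_eqn_for_FUP_implies_specgap}) of Proposition \ref{prop:dyatlov_jin_FUP_specgap}.

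Then I would invoke Proposition \ref{prop:dyatlov_jin_FUP_specgap} verbatim: with the displayed FUP bound in hand, its conclusion (\ref{eq:spec_gap}) gives
\[
	\limsup_{N\to\infty}\max\{|\lambda| : \lambda\in\Sp(B_N)\}\leq M^{-\beta},
\]
with the same exponent $\beta$. Since $\beta$ depends only on $M$ and $\mc A$, this is precisely the claimed statement, and the proof is complete in two lines.

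The only points that need checking — rather than constituting a genuine obstacle — are bookkeeping ones. One should confirm that the $\beta$ supplied by Theorem \ref{thm:1D_FUP} is uniform over all $k$; this holds because submultiplicativity (\ref{eq:submul}) reduces that theorem to producing a single scale $k_0$ with $\|1_{\mc X_{k_0}}\mc F 1_{\mc X_{k_0}}\|_{2\to 2} < 1$, after which $\beta$ is read off from $k_0$ and the estimate (\ref{eq:disc_fup}) propagates to all $k$ with a fixed implied constant. One should also check that Proposition \ref{prop:dyatlov_jin_FUP_specgap} is stated for the same normalization of $\mc F$ and the same indexing convention $N = M^k$ in use here, which it is. No new idea is required: the entire substance lives in the cited Theorem \ref{thm:1D_FUP} and Proposition \ref{prop:dyatlov_jin_FUP_specgap}, and this theorem merely records their composition.
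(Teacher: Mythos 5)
Your proposal is correct and is exactly the paper's route: the paper obtains Theorem \ref{thm:1D_spec_gap_theorem} by combining Theorem \ref{thm:1D_FUP} (applied with $\mc B = \mc A$) with Proposition \ref{prop:dyatlov_jin_FUP_specgap}, just as you do. The bookkeeping points you flag (uniformity of $\beta$ via submultiplicativity, matching normalizations) are the right ones to check and hold as you describe.
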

It is not hard to show that (\ref{eq:FUP_eqn_for_FUP_implies_specgap}) always holds with $\beta = \max(0, 1/2 - \delta)$, $\delta$ the fractal dimension, so this theorem is only interesting when $\delta \geq 1/2$. A different argument for $\delta < 1/2$ shows that in Theorem \ref{thm:1D_FUP} we can take $\beta > \max(0, 1/2 - \delta)$ for all $\delta$, giving an improved spectral gap for all fractal dimensions in Theorem \ref{thm:1D_spec_gap_theorem}. 

\subsubsection*{Two dimensional baker's maps}
A two dimensional quantum open baker's map is parametrized by a triple 
\begin{equation*}
	(M, \mc A, \chi),\quad M \in \Z_{> 0},\quad \mc A \subsetneq (\Z_M)^2,\quad \chi \in C_0^{\infty}((0,1)^2; [0,1]).
\end{equation*}
We will define baker's maps $B_N: \ell^2(\Z_N^2) \to \ell^2(\Z_N^2$), $N = M^k$. 
As before, define 
\begin{align*}
	&\Pi_{\mb a}: \ell^2(\Z_N^2) \to \ell^2(\Z_{N/M}^2),\quad \mb a = (a_1, a_2) \in (\Z_M)^2, \\ 
	&\Pi_{\mb a} u(\mb j) = u\Bigl(\mb j + \mb a\frac{N}{M}\Bigr).
\end{align*}
Then set 
\begin{align*}
	B_N = \sum_{\mb a \in \mc A} B_N^{\mb a}, \quad B_N^{\mb a} := \mc F_N^* \Pi_{\mb a}^* \chi_{N/M} \mc F_{N/M} \chi_{N/M} \Pi_{\mb a}
\end{align*}
where $\mc F_N$ denotes the unitary Fourier transform on $\ell^2(\Z_N^2)$ and $\chi_N(\mb j) = \chi(\mb j/N)$. In \S\ref{sec:proof_application_bakers} we sketch the proof that Proposition \ref{prop:dyatlov_jin_FUP_specgap} holds for two dimensional bakers maps as well, leading to the following. 

\begin{theorem}\label{thm:2D_spec_gap_theorem}
Suppose $\mc A \subsetneq \Z_M^2$ is an alphabet such that $\mb X$, the Cantor set generated by $\mc A$, does not contain a pair of orthogonal lines as in Theorem \ref{thm:2D_FUP}. Then there is some $\beta  = \beta(M, \mc A) > 0$ so that 
\begin{align*}
	\limsup_{N\to \infty} \max\{|\lambda|\, :\, \lambda \in \Sp(B_N)\} \leq M^{-\beta}.
\end{align*}
\end{theorem}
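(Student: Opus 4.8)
The plan is to reduce Theorem~\ref{thm:2D_spec_gap_theorem} to Theorem~\ref{thm:2D_FUP} via the two-dimensional analogue of Proposition~\ref{prop:dyatlov_jin_FUP_specgap}, exactly mirroring the one-dimensional pipeline of Dyatlov--Jin. The first step is to verify that the submultiplicativity estimate~(\ref{eq:submul}) holds in two dimensions with $\mc X_k = \mc Y_k$ (a special case of the general two-dimensional submultiplicativity asserted in \S\ref{sec:submul_proof}), so that the hypothesis $\|1_{\mc X_k}\mc F 1_{\mc X_k}\|_{2\to 2}\lesssim M^{-k\beta}$ for \emph{all} $k$ follows from the same bound for a \emph{single} $k$, which is precisely what Theorem~\ref{thm:2D_FUP} gives (with $\mc A = \mc B$, noting that the orthogonal-line hypothesis on $\mb X$ rules out the obstruction~(\ref{eq:has_orthogonal_lines}) since a single Cantor set containing both $\R\mb v + \mb p$ and $\R\mb v^\perp + \mb q$ is a stronger condition). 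Thus the analytic input~(\ref{eq:FUP_eqn_for_FUP_implies_specgap}) (in its 2D form) is available; it remains only to establish the 2D version of Proposition~\ref{prop:dyatlov_jin_FUP_specgap}.

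For that, I would follow the Dyatlov--Jin argument for \cite{DyatlovJinBakersMaps}*{Proposition 2.6} line by line in the two-dimensional setting, which is the content promised in \S\ref{sec:proof_application_bakers}. The key algebraic fact is that iterating $B_N = \sum_{\mb a\in\mc A}B_N^{\mb a}$ a total of $k$ times produces, after conjugating by $\mc F_N$, a sum over words $\mb w = \mb a_0\mb a_1\cdots\mb a_{k-1}\in\mc A^k$ of operators each of which factors through multiplication by $1_{\mc X_k}$ (in position space) sandwiched between Fourier transforms and cutoffs $\chi$; the cutoffs are harmless because $\|\chi\|_\infty \le 1$ and they only shrink supports, while the structural constraint from the alphabet forces the intermediate position and frequency variables to lie in $\mc X_k$. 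Concretely one shows $\|B_N^k\|_{2\to 2} \lesssim \|1_{\mc X_k}\mc F_N 1_{\mc X_k}\|_{2\to 2} + o(1)$ as $N\to\infty$ along $N = M^k$ (the $o(1)$ coming from boundary effects of the smooth cutoff $\chi$ supported in the open cube $(0,1)^2$), and then combining with~(\ref{eq:FUP_eqn_for_FUP_implies_specgap}) gives $\limsup_{N\to\infty}\|B_N^k\|_{2\to2}^{1/k} \le M^{-\beta}$. Since the spectral radius satisfies $\max\{|\lambda| : \lambda\in\Sp(B_N)\} \le \|B_N^k\|_{2\to2}^{1/k}$ for every $k$, sending $k\to\infty$ (equivalently $N\to\infty$) yields the claimed bound~(\ref{eq:spec_gap}).

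The main obstacle is the bookkeeping in the 2D analogue of \cite{DyatlovJinBakersMaps}*{Proposition 2.6}: one must carefully track how the composition of the block structure $\Pi_{\mb a}$, the rescaled Fourier transforms $\mc F_{N/M}$, and the cutoffs $\chi_{N/M}$ at successive scales assembles, after telescoping, into a single operator governed by $1_{\mc X_k}\mc F_N 1_{\mc X_k}$ up to errors controlled by the modulus of continuity of $\chi$ near $\partial(0,1)^2$. In one dimension this is a somewhat delicate but standard computation; the two-dimensional version is structurally identical --- the indices $\mb a,\mb j$ are simply vectors in $(\Z_M)^2$, $\Z_N^2$ rather than scalars, and the digit expansion~(\ref{eq:X_k_cantor_defn}) is componentwise --- so no genuinely new idea is needed, only care with the tensorial notation. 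I expect everything else (submultiplicativity, the reduction to a single scale, the passage from operator-norm decay to spectral radius) to be routine.
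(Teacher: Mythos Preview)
Your proposal is correct and follows essentially the same route as the paper: apply Theorem~\ref{thm:2D_FUP} with $\mc A = \mc B$ to obtain the FUP estimate~(\ref{eq:FUP_eqn_for_FUP_implies_specgap}), then invoke the two-dimensional analogue of Proposition~\ref{prop:dyatlov_jin_FUP_specgap}, whose proof is carried over from \cite{DyatlovJinBakersMaps} with only notational changes. The one refinement the paper makes explicit is that the single nontrivial ingredient to re-verify in 2D is the propagation-of-singularities estimate (\cite{DyatlovJinBakersMaps}*{Proposition~2.3}, reproduced here as Proposition~\ref{prop:propagation_singularities}), after which Propositions~2.4--2.6 of Dyatlov--Jin go through verbatim; this is precisely the ``bookkeeping'' you flag as the main obstacle, so your assessment is accurate.
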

Just as Theorem \ref{thm:1D_spec_gap_theorem} is only interesting for $\delta \geq 1/2$, Theorem \ref{thm:2D_spec_gap_theorem} is only interesting for $\delta \geq 1$, because we can always take $\beta = \max(0, 1 - \delta)$ in (\ref{eq:FUP_eqn_for_FUP_implies_specgap}). 

\subsection{Organization}
In \S\ref{sec:1D_argument} we give a new proof of 1D FUP (Theorem \ref{thm:1D_FUP}) as a warmup for our 2D argument. In \S\ref{sec:2D_argument} we prove Theorem \ref{thm:2D_FUP}, up to the proof of the main Lemma \ref{lem:main_lemma}, which we defer to \S\ref{sec:main_lemma}. 
In \S\ref{sec:loose_ends} we supply proofs of several earlier claims which are not directly relevant to Theorem \ref{thm:2D_FUP}.
In particular, we show the condition of Theorem \ref{thm:2D_FUP} is sharp,
prove Proposition \ref{prop:lines_in_cantor_set} regarding lines in Cantor sets, and sketch the 2D proof of Proposition \ref{prop:dyatlov_jin_FUP_specgap} regarding the application of FUP to quantum Bakers maps. 
In \S\ref{sec:beukers-smyth-argument} we give a sketch of Ruppert and Beukers-Smyth's Theorem \ref{thm:beukers_smyth} which is the essential ingredient to our Lemma \ref{lem:main_lemma}. 
Finally, in \S\ref{sec:higher_dim_continuous}, we compare Theorem \ref{thm:2D_FUP} to a more recent higher dimensional FUP the author \cite{Cohen2023} proved in $\R^d$. The more recent result can be used to prove an FUP for discrete Cantor sets in any dimension that avoid all lines, but cannot recover the precise orthogonal line condition proved in 2D in the present paper. 

\section{The 1D argument}\label{sec:1D_argument}
Our starting point is the following simple argument which can be used to establish a 1D FUP. 

\begin{proposition}\label{prop:1D_prop}
Let $I = [a, b)$ be an interval, and suppose $f: \Z_N \to \C$ is nonzero and has $\hat f|_{I} = 0$. Then $|\supp f| > |I| = b-a$.
\end{proposition}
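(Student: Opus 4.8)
The plan is to argue by contradiction via a dimension count, which is the natural ``uncertainty'' mechanism: a function supported on few points cannot have a Fourier transform vanishing on a large interval. Suppose $f \neq 0$, $\hat f|_I = 0$, and $|\supp f| \leq |I|$. I would consider the linear map that takes a function $g$ supported on $\supp f$ to the vector of values $(\hat g(\xi))_{\xi \in I}$. The hypothesis says $\hat f$ vanishes on $I$, so $f$ lies in the kernel of this map. To get a contradiction it suffices to show the map is injective when $|\supp f| \le |I|$, i.e.\ that the relevant matrix has full column rank.

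The key step is to identify this matrix as (a submatrix of) a Vandermonde-type matrix. Writing $\supp f = \{x_1, \dots, x_m\}$ with $m = |\supp f|$, the condition $\hat f(\xi) = 0$ for all $\xi \in I = \{a, a+1, \dots, b-1\}$ reads
\begin{equation*}
	\sum_{j=1}^m f(x_j) e^{-\frac{2\pi i}{N} \xi x_j} = 0, \qquad \xi = a, a+1, \dots, b-1.
\end{equation*}
Factoring out $e^{-\frac{2\pi i}{N} a x_j}$ from the $j$th term, this is a homogeneous linear system in the unknowns $c_j := f(x_j) e^{-\frac{2\pi i}{N} a x_j}$ whose coefficient matrix has $(\xi - a, j)$ entry $\zeta_j^{\,\xi-a}$ where $\zeta_j = e^{-\frac{2\pi i}{N} x_j}$. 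Since $x_1, \dots, x_m$ are distinct elements of $\Z_N$, the $\zeta_j$ are distinct $N$th roots of unity. This is a Vandermonde matrix with $b-a = |I|$ rows and $m$ columns; if $m \leq |I|$ it contains an $m \times m$ Vandermonde minor with distinct nodes, hence has rank $m$, so the only solution is $c_j = 0$ for all $j$, forcing $f = 0$, a contradiction. Therefore $|\supp f| > |I|$.

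I expect the main (only) obstacle to be purely bookkeeping: making sure the interval $I$ is interpreted correctly as a set of $|I| = b-a$ consecutive residues so the exponents $\xi - a$ range over $0, 1, \dots, |I|-1$, and recording that the nonvanishing of the Vandermonde determinant uses only that the $\zeta_j$ are \emph{distinct} (which holds since the $x_j$ are distinct mod $N$), not that they are primitive or that $N$ is prime. One could alternatively phrase the whole argument as: $\hat f$ is a trigonometric polynomial in one variable of ``degree'' controlled by $|\supp f|$, and a nonzero such polynomial cannot have $|\supp f|$ or more zeros among the $N$th roots of unity unless it is identically zero — but the Vandermonde formulation above is the cleanest and requires no new notation.
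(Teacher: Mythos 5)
Your proof is correct: the Vandermonde minor $(\zeta_j^s)_{0\le s\le m-1,\,1\le j\le m}$ with distinct nodes $\zeta_j=e^{-2\pi i x_j/N}$ is nonsingular, so the $|I|$ linear conditions $\hat f(\xi)=0$, $\xi\in I$, force $f(x_j)=0$ for all $j$ once $m\le|I|$, contradicting $x_j\in\supp f$. The bookkeeping you flag (consecutive residues giving consecutive exponents after factoring out $e^{-2\pi i a x_j/N}$, and distinctness rather than primitivity of the $\zeta_j$) is exactly right and is all that is needed.

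However, your route is genuinely different from the paper's. The paper does not compute a rank; it constructs an explicit multiplier $h$ whose Fourier transform is supported on $\{0,\dots,k-1\}$ and which vanishes at all support points of $f$ except one, so that $hf=c\,\delta_{x_k}$ has full Fourier support, and then exhibits a single frequency $b-1$ at which $\widehat{hf}=\hat h*\hat f$ would have to vanish if $k\le|I|$. The two arguments are dual faces of the same fact — your nonvanishing Vandermonde determinant is precisely what guarantees the interpolation problem defining $h$ is solvable — but the multiplier formulation is the one the paper deliberately sets up, because it is the version that survives the passage to two dimensions: there one multiplies $\hat f$ by a low-degree trigonometric polynomial vanishing on all but one \emph{line} of the support (Lemma \ref{lem:main_lemma}), and there is no clean 2D analogue of ``an $m\times m$ Vandermonde minor is nonsingular.'' Your argument is shorter and more self-contained for the 1D statement itself, but it does not foreshadow the 2D mechanism, which is the stated purpose of \S\ref{sec:1D_argument}.
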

\begin{proof}
Suppose $|\supp f| = k$. Let $S = \supp f = \{x_1, \ldots, x_k\}$. Let $F(z)$ be the polynomial 
\begin{align*}
	F(z) = (z-e^{\frac{2\pi i}{N} x_1})\dots (z-e^{\frac{2\pi i}{N} x_{k-1}}) = \sum_{j=0}^{k-1} a_j z^j. 
\end{align*}
Let $h: \Z_N\to \C$ be defined by
\begin{align*}
	h(x) = \frac{1}{\sqrt{N}} F(e^{\frac{2\pi i}{N} x}),\quad \hat h(j) = \begin{cases}
		a_j & 0 \leq j \leq k-1, \\ 
		0 & \text{else}.
	\end{cases}
\end{align*}
Then $h$ vanishes on all of $S$ except for $x_k$ (and $h$ is nonzero at $x_k$). Thus $hf = c \delta_{x_k}$, $c \neq 0$. So $\widehat{hf}$ has full Fourier support. But 
\begin{align*}
	\widehat{hf}(b-1) = (\hat h * \hat f)(b-1) = \sum_{j=0}^{k-1}  \hat h(j) \hat f(b-1-j).
\end{align*}
If $k \leq |I|$ we have $\widehat{hf}(b-1) = 0$ leading to a contradiction. Thus $|\supp f| > |I|$. 
\end{proof}
See Figure \ref{fig:1d_arg_vis} for a visualization. 
\begin{figure}
\includegraphics[width=0.9\linewidth]{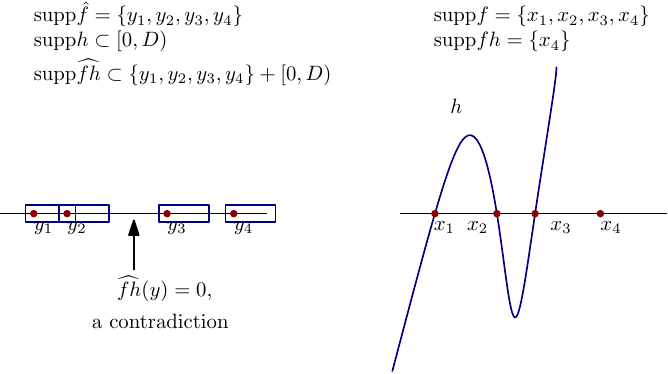}
\caption{Diagram of the 1D argument}
\label{fig:1d_arg_vis}
\end{figure}

\begin{remark}
This proof shares some similarities with Bourgain-Dyatlov's proof of 1D FUP for general fractal sets. They constructed a function $\psi$ with compact Fourier support and which decays quickly on a fractal set. They multiply by this function to discover that a function supported on a fractal set must have substantial Fourier mass in a union of intervals. In the discrete setting, things are much simpler: we may construct a multipler that vanishes on all but one element of the fractal set, and then multiply by this function to discover some Fourier mass in every gap. 
\end{remark}

\section{The 2D argument}\label{sec:2D_argument}

We first state our main lemma, then derive Theorem \ref{thm:2D_FUP} from this lemma, and finally discuss the proof of the lemma. 
For $A \subset \Z_N^2$, let 
\begin{align*}
	\N_R(A) &= A + [0,R) \times [0,R) = \supp (1_{[0,R)\times [0,R)} * 1_A)
\end{align*}
be the $R$-neighborhood of $A$. A line $\ell \subset \Z_N^2$ is a coset of the form 
\begin{align*}
	\ell = \{(x,y) \in \Z_N^2\, :\, ax+by=c\}.
\end{align*}
The coefficients $(a,b,c)$ are only determined up to multiplication by $\Z_N^{\times}$. 
We say $\ell$ is \textit{irreducible} if $a,b$ are coprime over $\Z_N$, and $\|\ell\| = R$ is the minimal number so that we can write
\begin{equation}\label{eq:size_of_line}
	\ell = \{(x,y) \in \Z_N^2\, :\, ax+by = c\},\quad |a|,|b| \leq R. 
\end{equation}

\begin{lemma}\label{lem:main_lemma}
Let $f: \Z_N^2 \to \C$ be a nonzero function with $\supp f = S$. Let $R = \lfloor 200|S|^{1/2}\rfloor$. There is an irreducible line $\ell$ with $\|\ell\| \leq R$ and a nonzero function $g$ with $\supp g \subset S \cap \ell$ and $\supp \hat g \subset \N_{R}(\supp \hat f)$. 
\end{lemma}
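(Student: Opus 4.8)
The plan is to mimic the one–dimensional argument of Proposition \ref{prop:1D_prop}, but instead of the elementary polynomial factorization used there, to build a two–variable trigonometric polynomial multiplier of low degree that kills all of $S$ except for one line's worth of points. Write $z=e^{2\pi i x/N}$, $w=e^{2\pi i y/N}$, and seek
\[
 h(x,y)=\sum_{0\le k,l\le D} a_{kl}z^k w^l,\qquad D\approx |S|^{1/2},
\]
chosen so that $h$ vanishes on $S$ minus some line $\ell$ and is not identically zero there. Dimension counting: the space of such $h$ has dimension $(D+1)^2$, and imposing $h=0$ at a point of $\Z_N^2$ is one linear condition on the coefficients $a_{kl}$. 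So as soon as $(D+1)^2 > |S\setminus\ell|$ for a suitable line $\ell$, a nonzero $h$ exists. The subtlety is that naively $|S\setminus\ell|$ could be as large as $|S|$, forcing $D\sim|S|^{1/2}$ only if $|S\setminus\ell|<|S|-\text{(something)}$; but more importantly one must guarantee the resulting $h$ does \emph{not} vanish on all of $S\cap\ell$. This is exactly where Ruppert–Beukers–Smyth (Theorem \ref{thm:beukers_smyth}) enters, as advertised: their theorem, or rather the packaged Lemma \ref{lem:main_lemma}'s internal use of it, says a degree-$D$ trigonometric polynomial either vanishes on at most $\sim D^2$ points of an irreducible algebraic curve or contains a line — so by carefully choosing $D\approx 200|S|^{1/2}$ (whence $R=\lfloor 200|S|^{1/2}\rfloor$) and the line $\ell$, one can force the existence of $h$ vanishing on $S\setminus\ell$ while Bezout/Ruppert–Beukers–Smyth prevents $h$ from also vanishing on all of $S\cap\ell$ unless $h$ is forced to contain the line $\ell$ — in which case one factors that linear factor out and repeats, or directly produces the desired structure.

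Granting such an $h$, set $g=h\,\hat f$. Then $\mathrm{supp}\,g\subset \mathrm{supp}\,\hat f \cap(\text{zero set of }h\text{'s complement})$ — wait, I have the roles of physical/Fourier side to line up. Concretely: take $g$ supported where we want it by letting $g = h f$ on the physical side, so $\mathrm{supp}\,g \subset S$ and, since $h$ kills $S\setminus\ell$, in fact $\mathrm{supp}\,g\subset S\cap\ell$, and $g\ne0$ because $h$ doesn't vanish identically on $S\cap\ell$. On the Fourier side, $\hat g = \hat h * \hat f$, and $\hat h$ is supported on $[0,D]\times[0,D]$, so
\[
 \mathrm{supp}\,\hat g \subset \mathrm{supp}\,\hat f + [0,D]\times[0,D] = \N_{D+1}(\mathrm{supp}\,\hat f)\subset \N_R(\mathrm{supp}\,\hat f),
\]
using $D+1\le R$. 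That gives both conclusions once $\ell$ is checked to be irreducible — which we can arrange, since if $\ell$ had a common factor $d$ in its coefficients over $\Z_N$ we could replace it by the irreducible line through $S\cap\ell$ (the points of $S\cap\ell$ lie on an irreducible sub-coset), and $\|\ell\|\le R$ will come from the bound on the slope of lines that can carry $\gtrsim|S|^{1/2}$ points combined with the $\max(|a|,|b|)$ bookkeeping in the spirit of Proposition \ref{prop:lines_in_cantor_set}.

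The main obstacle, and the step I expect to occupy most of the real work, is the \emph{selection of the line $\ell$} together with the non-vanishing guarantee: one needs to show that among all irreducible lines, there is one $\ell$ with $\|\ell\|\le R$ such that (a) $|S\setminus\ell|$ is small enough — at most $(D+1)^2-1$ — to force existence of a nonzero $h$ vanishing on $S\setminus\ell$, yet (b) $h$ cannot simultaneously vanish on all of $S\cap\ell$. Part (b) is a Bezout-type statement: if $h$ vanished on $S\setminus\ell$ and on $S\cap\ell$, it would vanish on all of $S$; but if $|S\cap\ell|$ is large ($\gtrsim D$) and $h$ vanishes there without containing $\ell$ as a component, that contradicts the classical Bezout bound (Theorem \ref{thm:bezout_theorem}) $|\ell\cap\{h=0\}|\le (\deg\ell)(\deg h)\lesssim D$; so $h$ must contain $\ell$, and then dividing by the linear factor of $\ell$ and iterating — or choosing the minimal-degree $h$ — yields a multiplier of the required shape. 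Making the constants fit ($200$, the $22D^2$ from Ruppert–Beukers–Smyth, and the passage from $D$ to $R$) and verifying that a line $\ell$ carrying the "heavy" part of $S$ genuinely exists (a pigeonhole/double-counting argument: if no line carried many points of $S$, then $|S\setminus\ell|$ would be close to $|S|$ for every $\ell$, and we'd just take $D\approx|S|^{1/2}$ with $\ell$ arbitrary of small norm, so really the two regimes — "$S$ concentrated on a line" vs. "$S$ spread out" — must be handled and reconciled) is the delicate bookkeeping I would need to get right.
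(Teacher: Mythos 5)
Your top-level strategy coincides with the paper's: build a low-degree trigonometric multiplier $h$ with $\hat h$ supported in $[0,R)^2$ that kills $S$ off a single line $\ell$, set $g = hf$, and read off $\supp g \subset S\cap\ell$ together with $\supp\hat g \subset \supp(\hat h * \hat f) \subset \N_R(\supp\hat f)$. That final step is correct as you state it. The genuine gap is the construction of $h$ itself, which you explicitly defer (``the step I expect to occupy most of the real work'') and for which your proposed dichotomy does not close. In your ``spread out'' regime --- no line carries many points of $S$ --- you suggest taking $\ell$ arbitrary of small norm and $D\approx|S|^{1/2}$; but the dimension count then only produces \emph{some} nonzero $h$ of degree $D$ vanishing on $S\setminus\ell$, and nothing prevents that $h$ from vanishing on all of $S$ (indeed a nonzero polynomial of degree $\lfloor|S|^{1/2}\rfloor$ vanishing on \emph{all} of $S$ always exists by the same rank--nullity count, and it is a perfectly valid output of your linear algebra). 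Your Bezout safeguard only bites when $|S\cap\ell|\gtrsim D\sim|S|^{1/2}$, i.e.\ precisely when $\ell$ is heavy, so in the spread-out case you have no mechanism forcing $h$ to be nonzero somewhere on $S$; imposing an extra affine condition $h(p)=1$ at a point $p\in S\cap\ell$ does not help, since the evaluation at $p$ may lie in the span of the evaluations at $S\setminus\ell$.

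The paper closes this gap with Lemma \ref{lem:polynomials_through_sets}, proved by a recursion you would need to reproduce. One takes $F$ of \emph{minimal} degree $D\le|S|^{1/2}$ vanishing on all of $S$, minimizing also the number of irreducible factors. If $F$ cuts out a line, then $S$ already lies on an irreducible line of norm $\le|S|^{1/2}$ and one takes $F^*=1$. If $F$ is irreducible but not a line, Theorem \ref{thm:beukers_smyth} gives $|S|\le 22D^2$, so a curve of degree $D-1$ passes through at least $(D-1)^2\ge|S|/40$ points of $S$; by minimality of $D$ the remaining points are nonempty, and one recurses on them, the degrees summing geometrically to $\le 200|S|^{1/2}$. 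If $F$ is reducible one peels off a factor and recurses similarly. This recursive peeling --- not a single choice of line plus one dimension count --- is what reconciles your two regimes; it is also where the irreducibility of $\ell$ and the bound $\|\ell\|\le R$ actually come from (the line is the factor $z^aw^b-\zeta$ supplied by Ruppert and Beukers--Smyth, with $a,b$ coprime and $\max(|a|,|b|)\le\deg F$), rather than from the pigeonhole bookkeeping in the spirit of Proposition \ref{prop:lines_in_cantor_set} that you sketch. As written, your argument establishes the easy reduction but not the lemma.
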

This Lemma is analagous to the proof of Proposition \ref{prop:1D_prop}, except we can only localize the support of $f$ to a line $\ell$ rather than to a single point. See Figure \ref{fig:lemma1_vis}. 
\begin{figure}
\includegraphics[width=0.8\linewidth]{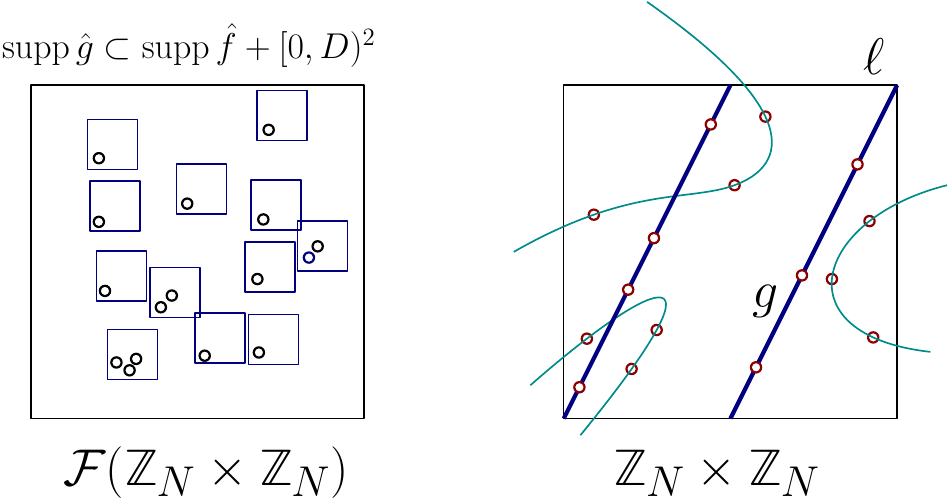}
\caption{Visualization of Lemma \ref{lem:main_lemma}}
\label{fig:lemma1_vis}
\end{figure}
Before showing how to derive Theorem \ref{thm:2D_FUP} using this Lemma we discuss discretizations of sets in $\T^2$, lines in $\T^2$, and lines in $\Z_N^2$. 

\subsection{Discretization of fractal sets}\label{sec:disc_of_fractal_set}
It will be more convenient to state our main results for discretizations of general fractal sets in $\T^2$ and then specialize to Cantor sets later. Let $\mb X \subset \T^2$ be closed. For $0 < r < 1$, let $\N_r(\mb X) = \mb X + [-r,r] \times [-r,r]$ be the $r$-neighborhood.\footnote{Our convention is that $\N_r(\mb X) = \mb X + [-r,r]\times [-r,r]$ denotes a neighborhood in $\T^2$, and $\N_R(A) = A + [0,R)\times [0,R)$ denotes an ``upper right'' neighborhood in $\Z_N^2$. We take the full neighborhood in $\T^2$ rather than just the upper right neighborhood for technical reasons---this convention makes Equation (\ref{eq:nbhd_eq}) true, and otherwise it would be more complicated to state.} Let
\begin{align*}
	X_N &= \left\{(x,y) \in \Z_N^2\, :\, \Bigl[\frac{x}{N}, \frac{x+1}{N}\Bigr] \times \Bigl[\frac{y}{N}, \frac{y+1}{N}\Bigr] \cap \mb X \neq \emptyset\right\} \\ 
	&\subset \left\{(x,y) \in \Z_N^2\, :\, \Bigl(\frac{x}{N}, \frac{y}{N}\Bigr) \in \N_{1/N}(\mb X)\right\}
\end{align*}
be a discretization of $\mb X$ to $\Z_N^2$. 
If $\mb X$ is the limiting Cantor set for an alphabet $\mc A$, then $X_{M^k} \subset \Z_{M^k}^2$ is just slightly larger than the $k$th Cantor iterate $\mc X_k$ of $\mc A$ (due to endpoint considerations). Likewise, the drawing $\mb X_k$ (\ref{eq:kth_iterate_drawing}) of the $k$th iterate in $\T^2$ is slightly smaller than $\N_{M^{-k}}(\mb X)$. 
If $R$ is an integer and $\N_R(X_N) = X_N + [0,R)\times[0,R)$, then 
\begin{equation}\label{eq:nbhd_eq}
	\N_R(X_N) \subset \left\{(x, y) \in \Z_N^2\, :\, \Bigl(\frac{x}{N}, \frac{y}{N}\Bigr) \in \N_{R/N}(\mb X)\right\} 
\end{equation}
where $\N_{R/N}(\mb X) \subset \T^2$. In what follows $R$ will be $\sim N^{\beta}$, $\beta < 1$, so $R/N \sim N^{\beta - 1}$, and $\N_{R/N}(\mb X)$ will look like a very small neighborhood of $\mb X$ in $\T$. 

\subsection{Some useful lemmas on lines}

\begin{lemma}\label{lem:lines_gen_one_elem}
Let 
\begin{align*}
	\ell = \{(x,y) \in \Z_N^2\, :\, ax+by=c\},\quad a,b,c \in \Z_N
\end{align*}
be an irreducible line, i.e., $a,b$ are coprime as elements of $\Z_N$. Then $\ell = \Z(-b, a) + \mb p$ where $\mb p \in \ell$ is arbitrary. We have $|\ell| = N$. Also, $a, b$ can be taken as coprime integers. 
\end{lemma}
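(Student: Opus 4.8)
The plan is to deduce all three assertions from the single fact that ``$a,b$ coprime in $\Z_N$'' means there exist $u,v \in \Z_N$ with $ua+vb=1$. First I would record that $\ell \neq \emptyset$: the point $(uc,vc)$ lies on it since $(ua+vb)c = c$. Fix any $\mb p=(p_1,p_2)\in\ell$. The containment $\Z(-b,a)+\mb p\subseteq\ell$ is clear because $a(-b)+b(a)=0$, so everything reduces to showing that any $(s,t)\in\Z_N^2$ with $as+bt=0$ is an integer multiple of $(-b,a)$. The point is that $\lambda:=ut-vs$ works: a direct substitution using $as=-bt$ and $ua+vb=1$ gives $-\lambda b=s$ and $\lambda a=t$. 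This yields $\ell=\Z(-b,a)+\mb p$ for every $\mb p\in\ell$, and as a byproduct shows $\ell$ is independent of the chosen representative $(a,b,c)$.

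For $|\ell|=N$: since $\ell$ is a translate of the cyclic subgroup $H:=\Z(-b,a)\le\Z_N^2$, it is enough to see $|H|=N$. The surjection $\Z_N\to H$, $\lambda\mapsto\lambda(-b,a)$, is injective, because $\lambda a=\lambda b=0$ forces $\lambda=\lambda(ua+vb)=u(\lambda a)+v(\lambda b)=0$; hence $|H|=N$.

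For the integer representatives: choose any integer lifts $a_0,b_0$ of $a,b$. Coprimality in $\Z_N$ says $\gcd(a_0,b_0,N)=1$, hence $d:=\gcd(a_0,b_0)$ satisfies $\gcd(d,N)=1$, so $d$ is a unit in $\Z_N$. Then $\mb w:=(-b_0/d,\,a_0/d)$ is a primitive integer vector whose reduction mod $N$ equals $d^{-1}(-b,a)$, hence still generates $H$. Writing $\mb w=(-\beta,\alpha)$ with $\gcd(\alpha,\beta)=1$, the line $\{\alpha x+\beta y=c'\}$ with $c':=\alpha p_1+\beta p_2$ passes through $\mb p$ and, by the coset description already proved (note $\gcd(\alpha,\beta)=1$ over $\Z$ implies $\alpha,\beta$ coprime over $\Z_N$), has difference set $\Z(-\beta,\alpha)=H$; therefore it equals $\ell$. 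So $\ell$ is cut out by coprime integer coefficients. The computations are all elementary; the only step that needs a little care is this last one, where one must check that passing from $(a_0,b_0)$ to the primitive vector $(a_0/d,b_0/d)$ does not change the subgroup generated modulo $N$, which is precisely why one needs $\gcd(a_0,b_0)$ to be invertible mod $N$.
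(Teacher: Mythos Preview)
Your proof is correct and follows essentially the same approach as the paper: both use a B\'ezout pair $ua+vb=1$ in $\Z_N$ to exhibit a point on $\ell$, to show every solution of $ax+by=0$ is a multiple of $(-b,a)$ (your $\lambda=ut-vs$ is exactly the paper's $-tx+sy$), and to argue $|\ell|=N$ via injectivity of $\lambda\mapsto\lambda(-b,a)$. For the last claim the paper also divides an integer lift of $(a,b)$ by its gcd $d$ and uses that $d$ is a unit mod $N$; you route this through the coset description rather than directly manipulating the defining equation, but the content is the same.
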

\begin{proof}
Pick $s, t$ so tha $sa+tb = 1 \pmod N$. We have $(cs, ct) \in \ell$. Suppose $ax+by=0$. We claim $(x,y) = (-b, a)\cdot (-tx+sy)$. Indeed, 
\begin{align*}
	-b(-tx+sy) = tbx - sby = tbx + sax = x \pmod N, \\ 
	a(-tx+sy) = -atx + asy = tby + say = y \pmod N
\end{align*}
as needed. This shows that for $(x,y) \in \ell$, $(x, y) - \mb p \in (-b, a)\Z$. 

To see that $|\ell| = N$, notice that $(-nb, na) + \mb p = (-mb, ma) + \mb p \pmod N$ if and only if $(-(n-m)b, (n-m)a) = 0 \pmod N$ if and only if $n = m \pmod N$, using that $a,b$ are coprime. 

Finally, suppose $a$ and $b$ are not coprime integers, but $a = \alpha a', b = \alpha b'$, where $a', b'$ are coprime integers. Then because $a,b$ are coprime mod $N$, $\alpha, N$ are coprime, so 
\begin{align*}
	ax+by = c \Leftrightarrow \alpha(a'x+b'y) = c \Leftrightarrow a'x + b'y = \alpha^{-1}c
\end{align*}
where the equalities above are mod $N$. 
\end{proof}

We will need a uniformity result for lines through closed sets $\mb X \subset \T^2$. In what follows 
\begin{equation}\label{eq:linf_dist_T2}
d(\mb p, \mb q) = \max(|p_1 - q_1|_{\T}, |p_2 - q_2|_{\T}),\quad |x|_{\T} = \min_{n \in \Z} |x-n|_{\R}. 	
\end{equation}
is the $\ell^{\infty}$ distance on $\T^2$. 
First we need a lemma. 
\begin{lemma}\label{lem:denseness_fact}
Let $\mb v = (a,b)$ with $a,b$ coprime integers. Every coset $\ell = \R \mb v + \mb p$ is quantitatively dense in $\T^2$, in the sense that for every $\mb q \in \T^2$, $d(\mb q, \ell) \leq 1/\max(|a|, |b|)$.
\end{lemma}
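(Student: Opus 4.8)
The plan is elementary: reduce to the case where one coordinate of $\mb v$ strictly dominates, then slice $\T^2$ by that dominant coordinate and use $\gcd(a,b)=1$ to see that $\ell$ meets each slice in an arithmetic progression of step exactly $1/\max(|a|,|b|)$. First I would dispose of the degenerate case $\max(|a|,|b|)\le 1$: there $1/\max(|a|,|b|)\ge 1$, while $d(\mb q,\ell)\le d(\mb q,\mb p)\le \tfrac12$ always by \eqref{eq:linf_dist_T2} (since $\mb p\in\ell$ and each coordinate distance $|\cdot|_\T$ is at most $\tfrac12$), so the bound is automatic. So assume $\max(|a|,|b|)\ge 2$; since $\R\mb v=\R(-\mb v)$ and we may swap the two coordinates, we may assume $b=\max(|a|,|b|)\ge 2$. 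Note that $b\ge 2$ together with $\gcd(a,b)=1$ forces $|a|\ne|b|$, so this reduction is unambiguous.

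Next, fix a target $\mb q=(q_1,q_2)\in\T^2$ and examine the points of $\ell$ lying on the horizontal circle $\{y=q_2\}$. A point $\mb p+t\mb v\in\ell$ has second coordinate $p_2+tb$, which is $\equiv q_2\pmod 1$ exactly when $t=(q_2-p_2+m)/b$ for some $m\in\Z$ (this is where $b\ne 0$ is used: $t$ real genuinely lets the second coordinate hit every residue mod $1$). For such $t$, the first coordinate of the point is $p_1+ta=p_1+\tfrac{a}{b}(q_2-p_2)+\tfrac{a}{b}m\pmod 1$. Since $\gcd(a,b)=1$, the map $m\mapsto am\bmod b$ is a bijection of $\Z/b\Z$, so as $m$ runs over $\{0,1,\dots,b-1\}$ the residues $\tfrac{a}{b}m\bmod 1$ run over $\{0,\tfrac1b,\tfrac2b,\dots,\tfrac{b-1}{b}\}$. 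Hence the first coordinates of the points of $\ell\cap\{y=q_2\}$ form a coset of $\tfrac1b\Z/\Z$ in $\T$, i.e. an arithmetic progression of step $1/b$ wrapping once around the circle.

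Finally, every point of $\T$ lies within $1/(2b)$ of such a progression, so there is a point $\mb r\in\ell$ with $|r_1-q_1|_\T\le 1/(2b)$ and $|r_2-q_2|_\T=0$, giving $d(\mb q,\ell)\le d(\mb q,\mb r)\le 1/(2b)\le 1/\max(|a|,|b|)$ — slightly stronger than claimed. I do not expect any real obstacle here: the only care needed is in the bookkeeping of the degenerate and symmetry reductions, and in observing that the slice argument uses the coprimality $\gcd(a,b)=1$ in an essential way (without it the step would be $\gcd(a,b)/b$ and the progression would only cover a sub-circle).
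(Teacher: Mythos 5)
Your proof is correct and is essentially the paper's argument: both intersect the coset with a coordinate slice and use $\gcd(a,b)=1$ to identify that intersection as a progression of step $1/\max(|a|,|b|)$ wrapping around the circle. The only cosmetic differences are that you first reduce by symmetry to the dominant coordinate (where the paper slices in both directions and takes the better bound) and that your bookkeeping yields the slightly sharper constant $1/(2\max(|a|,|b|))$.
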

In the following proof we denote $\frac{1}{b}\Z = \{n / b\, :\, n \in \Z\}$. 
\begin{proof}
For every $y_0 \in \T$, $(\R \mb v + \mb p) \cap \{y = y_0\}$ is a coset of $\frac{1}{b}\Z$, 
and for every $x_0 \in \T$, $(\R \mb v + \mb p) \cap \{x = x_0\}$ is a coset of $\frac{1}{a}\Z$. Thus
\begin{align*}
	d((x_0,y_0), \ell) &\leq d((x_0,y_0), \ell \cap \{y = y_0\}) \leq \frac{1}{|a|} \\ 
	d((x_0,y_0), \ell) &\leq d((x_0,y_0), \ell \cap \{x = x_0\}) \leq \frac{1}{|b|} 
\end{align*}
giving the result. 
\end{proof}

\begin{lemma}\label{lem:line_contains_uniformity}
Suppose $\mb X \subsetneq \T^2$ is closed. There is a constant $c_{\mb X} > 0$ such that for every direction $\mb v \in \R^2 - \{0\}$, either some coset $\R \mb v + \mb p$ lies entirely on $\mb X$, or 
\begin{equation}\label{eq:line_avoids_byc}
 	\sup_{x \in \R \mb v + \mb p} d(x, \mb X) \geq c_{\mb X}
\end{equation} 
for every $\mb p$. Moreover, there is some $C_{\mb X} > 0$ so that if $a, b$ are coprime integers with $\max(|a|, |b|) > C_{\mb X}$, then (\ref{eq:line_avoids_byc}) holds for $\mb v = (a,b)$. 
\end{lemma}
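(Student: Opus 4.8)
The plan is to measure, for each direction, how far the \emph{best} coset in that direction can stay from $\mb X$, and then bound this quantity below uniformly. Write $D = D_{\mb X} := \max_{\mb x \in \T^2} d(\mb x, \mb X)$, which is positive since $\mb X$ is a proper closed subset of $\T^2$. For a direction $\mb v \in \R^2 - \{0\}$ and $\mb p \in \T^2$ put $\ell_{\mb v,\mb p} = \R\mb v + \mb p$ and let $\overline{\ell_{\mb v,\mb p}}$ be its closure in $\T^2$; since $d(\cdot,\mb X)$ is continuous, $\sup_{\mb x \in \ell_{\mb v,\mb p}} d(\mb x,\mb X) = \max_{\mb x \in \overline{\ell_{\mb v,\mb p}}} d(\mb x,\mb X)$. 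Set
\[
	F(\mb v) \;=\; \min_{\mb p \in \T^2}\; \max_{\mb x \in \overline{\ell_{\mb v,\mb p}}} d(\mb x,\mb X).
\]
Because $d(\cdot,\mb X)$ is $1$-Lipschitz and $\overline{\ell_{\mb v,\mb p}} = \overline{\ell_{\mb v,\mb 0}} + \mb p$ is a translate of a fixed compact set, the map $\mb p \mapsto \max_{\mb x \in \overline{\ell_{\mb v,\mb p}}} d(\mb x,\mb X)$ is $1$-Lipschitz, so the minimum defining $F(\mb v)$ is attained, at some $\mb p_\ast$. Consequently $F(\mb v) = 0$ iff $\overline{\ell_{\mb v,\mb p_\ast}} \subset \mb X$, i.e.\ iff some coset of direction $\mb v$ lies on $\mb X$; and if $F(\mb v) > 0$ then every coset $\R\mb v + \mb p$ satisfies $\sup_{\mb x \in \R\mb v+\mb p} d(\mb x,\mb X) \geq F(\mb v)$. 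So the lemma reduces to a positive lower bound for $F(\mb v)$, uniform over the directions with $F(\mb v) > 0$, plus a threshold $C_{\mb X}$ past which integer directions lie in that class.

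Next I would split into three cases. If $\mb v$ has irrational slope then $\overline{\ell_{\mb v,\mb p}} = \T^2$ for every $\mb p$ (Kronecker), so $F(\mb v) = D$. If $\mb v$ is parallel to a coprime integer pair $(a,b)$ with $m := \max(|a|,|b|)$, then Lemma \ref{lem:denseness_fact} says every coset $\R\mb v+\mb p$ is $1/m$-dense; choosing $\mb q_\ast$ with $d(\mb q_\ast,\mb X) = D$, each such coset meets the $1/m$-ball about $\mb q_\ast$, whence $\max_{\mb x \in \ell_{\mb v,\mb p}} d(\mb x,\mb X) \geq D - 1/m$ by the triangle inequality, so $F(\mb v) \geq D - 1/m$. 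Taking $C_{\mb X} = 2/D$, every integer direction with $m > C_{\mb X}$ has $F(\mb v) \geq D/2 > 0$, which is exactly the ``moreover'' clause. Finally, only finitely many rational directions remain — those parallel to a coprime pair with $\max(|a|,|b|) \leq C_{\mb X}$ — and I let $c_1$ be the minimum of $F(\mb v)$ over those of them with $F(\mb v) > 0$ (set $c_1 = D$ if there are none), which is positive as a finite minimum of positive numbers. Then $c_{\mb X} := \min(D/2, c_1)$ works for all directions: irrational and large-coefficient rational directions give $F \geq D/2 \geq c_{\mb X}$, while each remaining small-coefficient rational direction either carries a coset on $\mb X$ (the first alternative) or satisfies $F(\mb v) \geq c_1 \geq c_{\mb X}$.

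The main obstacle is conceptual, not computational: one cannot simply invoke continuity of $F$ on the compact space of directions, because $F$ is genuinely discontinuous — it equals $D$ on irrational directions but can drop to something arbitrarily small, or to $0$, on nearby rational directions. The fix is the density bound of Lemma \ref{lem:denseness_fact}, which confines small values of $F$ to rational directions with bounded coefficients, a finite set that can be handled individually. A secondary point needing care is the equivalence ``$F(\mb v) = 0 \iff$ some coset lies on $\mb X$'', which relies on the minimum in the definition of $F$ being attained; this is why the $1$-Lipschitz/compactness observation is made up front, since otherwise a direction with infimum $0$ but not attained would slip between the two alternatives.
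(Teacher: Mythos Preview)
Your proof is correct and follows essentially the same approach as the paper: pick a point at maximal distance $D$ from $\mb X$, use Kronecker density for irrational slopes, use Lemma~\ref{lem:denseness_fact} to get $F(\mb v)\geq D-1/m$ for rational slopes (so large $m$ gives $F\geq D/2$), and handle the finitely many remaining small-coefficient rational directions individually. Your framing via the function $F$ and the explicit Lipschitz/compactness justification for why the minimum over $\mb p$ is attained is slightly more careful than the paper's, but the argument is otherwise identical.
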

\begin{proof}
Because $\mb X$ is a closed proper subset of $\T^2$ it is not dense, and there is some $x_0 \in \T^2$ with $d(x_0, \mb X) \geq 2c_0$. If $\mb v = (\alpha, \beta)$ with $\alpha / \beta$ or $\beta / \alpha$ irrational, then $\R \mb v + \mb p$ is dense and has points coming arbitrarily close to $x_0$, thus 
\begin{align*}
	\sup_{x \in \R \mb v + \mb p} d(x, \mb X) \geq 2c_0.
\end{align*}
Otherwise, let $\mb v = (a,b)$ with $a,b$ coprime integers. By Lemma \ref{lem:denseness_fact},
\begin{align*}
	\inf_{x \in \R \mb v + \mb p} d(x, x_0) \leq 1/\max(|a|, |b|)
\end{align*}
and
\begin{align*}
	\sup_{x \in \R \mb v + \mb p} d(x, \mb X) \geq 2c_0 - 1/\max(|a|, |b|). 
\end{align*}
Hence if $\max(|a|, |b|) > 1/c_0$, $\sup_{x \in \R \mb v + \mb p} d(x, \mb X) \geq c_0$. For each pair of coprime integers $a,b$ with $\max(|a|, |b|) \leq 1/c_0$, either some coset $\R(a,b) + \mb p$ lies on $\mb X$, or there is a $c_1$ so 
\begin{align*}
 	\sup_{x \in \R(a,b) = \mb p} d(x, \mb X) \geq c_1 \quad \text{ for all } \mb p \in \T^2.
\end{align*} 
There are finitely many such choices of $(a,b)$, so $c_1$ can be chosen uniformly in all of them. We take $c_{\mb X} = \min(c_0, c_1)$ in (\ref{eq:line_avoids_byc}). 
\end{proof}

\subsection{Proof of Theorem \ref{thm:2D_FUP} assuming Lemma \ref{lem:main_lemma}}
Before proving Theorem \ref{thm:2D_FUP}, we prove the following simpler proposition, which applies when one of the fractal sets $\mb X, \mb Y$ avoids all lines. 

\begin{proposition}\label{prop:supp_size_cor}
Suppose $\mb X \subsetneq \T^2$ is closed and does not contain any closed cosets $\R \mb v + \mb p \subset \T^2$. By Lemma \ref{lem:line_contains_uniformity}, there is some $c_{\mb X} > 0$ so that
\begin{equation*}
	\sup_{x \in \ell} d(x, \mb X) \geq c_{\mb X}, \quad \ell = \R \mb v + \mb p \text{ arbitrary}.
\end{equation*}
If $f: \Z_N^2 \to \C$ is nonzero and has $\supp \hat f \subset X_N$, then 
\begin{equation}\label{eq:S_large}
	|\supp f| \geq \frac{c_{\mb X}^2}{400^2}N^2.
\end{equation}
\end{proposition}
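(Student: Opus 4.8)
The plan is to derive the lower bound on $|\supp f|$ by a contradiction argument driven by Lemma \ref{lem:main_lemma}, exactly mirroring the single-point localization in Proposition \ref{prop:1D_prop} but with a line playing the role of the point. Write $S = \supp f$ and $T = \supp \hat f \subset X_N$. Suppose for contradiction that $|S| < \frac{c_{\mb X}^2}{400^2}N^2$, so that $R = \lfloor 200|S|^{1/2}\rfloor < \frac{c_{\mb X}}{2}N$ (up to the floor, which is harmless for $N$ large; for small $N$ the statement is vacuous or can be absorbed into the implied constant). Apply Lemma \ref{lem:main_lemma} to $f$: there is an irreducible line $\ell \subset \Z_N^2$ with $\|\ell\| \leq R$ and a nonzero $g$ with $\supp g \subset S \cap \ell$ and $\supp \hat g \subset \N_R(\supp \hat f) = \N_R(T)$.

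Next I would translate the two support conditions on $g$ into a geometric statement in $\T^2$. On the Fourier side, $\supp \hat g \subset \N_R(T) \subset \N_R(X_N)$, and by the neighborhood inclusion (\ref{eq:nbhd_eq}), every point $\mb\xi \in \supp \hat g$ satisfies $(\xi_1/N, \xi_2/N) \in \N_{R/N}(\mb X)$. On the physical side, $\supp g$ lies in the single line $\ell$. Here is the crux: since $g$ is supported on a one-dimensional coset $\ell = \Z(-b,a) + \mb p$ (Lemma \ref{lem:lines_gen_one_elem}), its Fourier transform $\hat g$ is constant in magnitude along the dual cosets — more precisely, $\hat g$ is supported on a union of at most... actually $\hat g$ need not be supported on few lines, but $|\hat g|$ is \emph{invariant} under translation by the dual line direction $\mb w$ dual to $\mb v = (-b,a)$. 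So if $\hat g(\mb\xi_0) \neq 0$ for some $\mb\xi_0$, then $\hat g(\mb\xi_0 + t\mb w) \neq 0$ for \emph{all} $t \in \Z_N$, i.e. a full dual line $\ell^* = \Z\mb w + \mb\xi_0 \subset \supp \hat g$. The direction $\mb w$ is (up to scaling) $(a,b)$ — the same coefficient vector defining $\ell$ — so $\ell^*$ is a line with coefficient vector of size $\leq R$, hence by Lemma \ref{lem:lines_gen_one_elem} and Lemma \ref{lem:denseness_fact} its drawing $\{(\xi_1/N,\xi_2/N) : \mb\xi \in \ell^*\}$ is $\frac{1}{\max(|a|,|b|)}$-dense in $\T^2$; crucially its real-line extension $\R\mb w + \mb\xi_0/N$ is a genuine geodesic coset in $\T^2$.

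Now combine: this entire dual coset $\ell^*$ lies in $\supp\hat g \subset \N_R(X_N)$, so the geodesic line $L := \R\mb w + \mb\xi_0/N \subset \T^2$ satisfies $\sup_{x \in L} d(x, \mb X) \leq R/N + O(1/N) < c_{\mb X}$, where the $R/N$ comes from (\ref{eq:nbhd_eq}) and the $O(1/N)$ from passing between the lattice points $\mb\xi/N$ and the continuous line $L$ (the lattice points $\{\mb\xi/N : \mb\xi \in \ell^*\}$ are within $O(1/N)$ of $L$ since $\max(|a|,|b|) \leq R \ll N$, using Lemma \ref{lem:denseness_fact} to control the spacing). This contradicts Proposition \ref{prop:line_contains_uniformity}'s... rather Lemma \ref{lem:line_contains_uniformity}'s conclusion (\ref{eq:line_avoids_byc}) for the direction $\mb w$, which asserts $\sup_{x \in L} d(x,\mb X) \geq c_{\mb X}$ since $\mb X$ contains no geodesic line. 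The contradiction forces $|S| \geq \frac{c_{\mb X}^2}{400^2}N^2$.

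\textbf{Main obstacle.} The delicate point is the passage from "$g$ supported on a lattice line $\ell \subset \Z_N^2$" to "$|\hat g|$ invariant along a dual lattice line" and then to "a genuine geodesic line in $\T^2$ stays within $c_{\mb X}$ of $\mb X$." One must check that the dual direction $\mb w$ really has small coefficients (it should, being essentially $(a,b)$ with $|a|,|b| \leq R$), that the lattice points of $\ell^*$ approximate the continuous geodesic $L$ to accuracy $o(c_{\mb X})$ — this needs $R/N$ and $1/N$ both small, i.e. $R < \frac{c_{\mb X}}{2}N$, which is exactly where the constant $400^2$ (versus $200^2$) buys the needed slack — and that the neighborhood bookkeeping in (\ref{eq:nbhd_eq}) is applied with the correct radius. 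Everything else is a routine translation of Fourier-support facts into geometry. I expect no serious difficulty beyond carefully tracking these constants and the $\ell^\infty$-distance conventions on $\T^2$.
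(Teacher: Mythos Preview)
Your proposal is correct and follows essentially the same route as the paper's proof: apply Lemma~\ref{lem:main_lemma}, use that $g$ supported on $\ell$ forces $|\hat g|$ to be constant along the dual direction $\mb v=(a,b)$, and then invoke the line-avoidance constant $c_{\mb X}$ to reach a contradiction with $\supp\hat g\subset \N_R(X_N)$. The paper phrases the last step contrapositively (for each $\mb\xi$ it finds an integer $n$ with $n\mb v+\mb\xi\notin \N_R(X_N)$, hence $\hat g\equiv 0$), while you argue that a full dual coset would sit inside $\N_{R/N}(\mb X)$; these are the same argument. One small correction: the ``$O(1/N)$'' you cite for passing between $L$ and the lattice points of $\ell^*$ should be $O(R/N)$, since consecutive points $n\mb v/N$ on $L$ are spaced by $\max(|a|,|b|)/N\le R/N$ in the $\ell^\infty$ metric (the lattice points lie exactly on $L$, so the error is in approximating arbitrary points of $L$ by them). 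This does not affect the conclusion, as $R/N<c_{\mb X}/2$ still yields $\sup_{x\in L}d(x,\mb X)\le \tfrac{3}{2}R/N<c_{\mb X}$.
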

\begin{proof}
Suppose $\supp f = S$, $\supp \hat f \subset X_N$. 
Apply Lemma \ref{lem:main_lemma} to $f$. We obtain an $R \leq 200 |S|^{1/2}$, a line 
\begin{align*}
	\ell = \{(x, y)\, :\, ax+by=c\},\quad a,b \text{ coprime},\quad \max(|a|, |b|) \leq R
\end{align*}
and a nonzero $g$ supported on $\ell$ with $\supp \hat g \subset \N_{R}(X_N)$. We claim $R/N \geq c_{\mb X}/2$, which would imply (\ref{eq:S_large}). 

Suppose $R/N < c_{\mb X}/2$. We show $g = 0$. Set $\mb v = (a,b)$ and $\mb v^{\perp} = (-b, a)$.
Because $g$ is supported on $\ell$, $\hat g$ has constant magnitude on dual lines $\Z\mb v + \mb p$. Indeed,
\begin{align*}
	\hat g(\mb \xi) &= \frac{1}{N}\sum_{\mb v \cdot \mb x = c \pmod N} g(\mb x) e^{\frac{2\pi i}{N} \mb \xi \cdot \mb x} \\ 
	\hat g(\mb \xi + n\mb v) &= \frac{1}{N}\sum_{\mb v \cdot \mb x = c \pmod N} g(\mb x) e^{\frac{2\pi i}{N} n\mb v \cdot \mb x}e^{\frac{2\pi i}{N} \mb \xi \cdot \mb x}  = e^{\frac{2\pi i}{N} nc} \hat g(\mb \xi).
\end{align*}
Let $\mb \xi \in \Z_N^2$ be arbitrary. Let $t \in \R$ be such that $d(t\mb v/N + \mb \xi/N, \mb X) \geq c_{\mb X}$.
Let $n$ be the nearest integer to $t$. Then 
\begin{align*}
	d(n\mb v/N + \mb \xi/N, \mb X) \geq c_{\mb X} - \max\Bigl(\frac{|a|}{N}, \frac{|b|}{N}\Bigr) \geq c_{\mb X}/2.
\end{align*}
By Equation (\ref{eq:nbhd_eq}), since $R/N < c_{\mb X}/2$, $n\mb v + \mb \xi \notin \N_R(X_N)$, so $\hat g(n\mb v + \mb \xi) = 0$ by hypothesis. Thus $\hat g(\mb \xi) = 0$ as well. Since $\mb \xi \in \Z_N^2$ was arbitrary, $g = 0$.
\end{proof}

Now we prove a more general proposition applying to measure zero sets $\mb X, \mb Y$ which don't contain a pair of orthogonal lines. Theorem \ref{thm:2D_FUP} follows directly from this proposition by submultiplicativity. 

\begin{proposition}\label{prop:main_FUP_prop}
Suppose $\mb X, \mb Y \subset \T^2$ are closed and have Lebesgue measure zero. Suppose that for every direction $\mb v = (a,b) \in \R^2 - \{0\}$, $\mb v^{\perp} = (-b, a)$ either $\mb X$ contains no coset $\R \mb v + \mb p$ or $\mb Y$ contains no coset $\R \mb v^{\perp} + \mb p$. Then for large enough $N$, there is no nonzero $f: \Z_N^2 \to \C$ with $\supp f \subset X_N$ and $\supp \hat f \subset Y_N$. 	
\end{proposition}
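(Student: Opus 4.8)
The plan is to run the argument of Proposition \ref{prop:supp_size_cor} symmetrically on both $f$ and $\hat f$, feeding the output of Lemma \ref{lem:main_lemma} on one side into a support-size lower bound, and then closing the loop using the hypothesis that $\mb X$ and $\mb Y$ cannot simultaneously carry orthogonal cosets. Concretely, suppose for contradiction that $f \neq 0$ with $\supp f = S \subset X_N$ and $\supp \hat f = T \subset Y_N$. Apply Lemma \ref{lem:main_lemma} to $\hat f$ (viewed as a function whose Fourier transform is $f(-\cdot)$ up to normalization): we get an irreducible line $\ell = \{ax+by = c\}$ with $\mb v := (a,b)$ coprime integers, $\max(|a|,|b|) \leq R := \lfloor 200|T|^{1/2}\rfloor$, and a nonzero $g$ with $\supp g \subset T \cap \ell$ and $\supp \hat g \subset \N_R(S)$. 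Since $g$ is supported on the line $\ell$ in direction $\mb v^\perp = (-b,a)$, its inverse transform $\check g = h^\vee * f$-type object has constant magnitude along the dual lines $\Z \mb v^\perp + \mb q$ and is nonzero; meanwhile $\supp \check g \subset \N_R(S) \subset \N_R(X_N) \subset \{(x/N, y/N) \in \N_{R/N}(\mb X)\}$ by (\ref{eq:nbhd_eq}).

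The first key step is to observe that, because $\mb X$ has measure zero, a full dual line $\Z \mb v^\perp + \mb q$ (equivalently the coset $\R \mb v^\perp + \mb q$, since the integer multiples of $\mb v^\perp / N$ are $(1/\max(|a|,|b|))$-dense in the continuous coset by Lemma \ref{lem:denseness_fact}) cannot lie in $\N_{R/N}(\mb X)$ for $R/N$ small unless $\mb X$ actually contains a coset $\R \mb v^\perp + \mb p$: if no coset in direction $\mb v^\perp$ lies on $\mb X$, then by Lemma \ref{lem:line_contains_uniformity} every such coset has a point at distance $\geq c_{\mb X}$ from $\mb X$, and repeating the sampling argument from Proposition \ref{prop:supp_size_cor} forces $\check g$ to vanish at a representative of every dual line, hence $\check g = 0$, a contradiction — provided $R/N < c_{\mb X}/2$, i.e. provided $|T| < (c_{\mb X}/400)^2 N^2$. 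So either $|T| \gtrsim_{\mb X} N^2$, which contradicts $T \subset Y_N$ and the measure-zero hypothesis on $\mb Y$ (for $N$ large, $|Y_N|/N^2 \to 0$), or $\mb X$ contains a coset $\R \mb v^\perp + \mb p$ in direction $\mb v^\perp$.

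In the latter case we now invoke the hypothesis: since $\mb X$ contains a coset in direction $\mb v^\perp = (-b,a)$, the set $\mb Y$ contains no coset in direction $(\mb v^\perp)^\perp = \mb v = (a,b)$, up to a sign which does not matter since cosets are the same for $\mb v$ and $-\mb v$. But $g$ is a nonzero function supported on $T \cap \ell$ with $\ell$ a line in direction $\mb v^\perp$; dually, $\check g$ is supported in $\N_R(X_N)$ and is constant-magnitude nonzero on dual lines in direction $\mb v$. Wait — rather, we should run the \emph{second} application symmetrically: the line $\ell$ produced actually lies inside $T \subset Y_N$, and $\ell$ has direction $\mb v^\perp$ (by Lemma \ref{lem:lines_gen_one_elem}, $\ell = \Z(-b,a) + \mb p = \Z \mb v^\perp + \mb p$). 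So $Y_N$ literally contains the discretized line $\ell$ in direction $\mb v^\perp$, which forces (since $\max(|a|,|b|) \leq R$, and using (\ref{eq:nbhd_eq}) with the full-neighborhood convention plus the fact that $\ell$ has $N$ points spread along direction $\mb v^\perp$) that $\N_{R/N}(\mb Y)$ contains a full coset $\R \mb v^\perp + \mb q$; then letting $N \to \infty$ along the relevant subsequence and using compactness (the cosets $\R \mb v^\perp + \mb q_N$ subconverge to a coset in $\overline{\mb Y} = \mb Y$), we conclude $\mb Y$ contains a coset in direction $\mb v^\perp$. Hmm, but $\mb v$ itself is not fixed — it depends on $N$.

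Let me restate the closing argument cleanly, which is the real content of the last paragraph. After applying Lemma \ref{lem:main_lemma} to $\hat f$ we have a direction $\mb v_N = (a_N, b_N)$ (coprime integers) with $\max(|a_N|,|b_N|) \leq 200|T|^{1/2}$; by Lemma \ref{lem:line_contains_uniformity} applied to $\mb Y$, if $\max(|a_N|,|b_N|) > C_{\mb Y}$ then no coset in direction $\mb v_N^\perp$ lies on $\mb Y$ and simultaneously the sampling argument (as in Proposition \ref{prop:supp_size_cor}) shows $S$ must be large, $|S| \gtrsim_{\mb Y} N^2$, contradicting $S \subset X_N$ and $|X_N|/N^2 \to 0$. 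Hence $\max(|a_N|,|b_N|) \leq C_{\mb Y}$, so $\mb v_N$ ranges over a \emph{finite} set of directions independent of $N$; pass to a subsequence on which $\mb v_N \equiv \mb v$ is constant. Now for this fixed $\mb v$: the line $\ell_N \subset Y_N$ in direction $\mb v^\perp$ (from Lemma \ref{lem:main_lemma}) is a genuine line in $Y_N$, which by compactness forces a coset $\R \mb v^\perp + \mb p \subset \mb Y$; and the sampling argument on $\check g$ (using $\supp \check g \subset \N_R(X_N)$, $R/N \to 0$, measure-zero of $\mb X$) forces either $|S| \gtrsim N^2$ (contradiction) or a coset $\R \mb v + \mb p' \subset \mb X$. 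Wait, directions: $\check g$ has constant magnitude on lines in direction $\mb v$ since $g$ lives on a line in direction $\mb v^\perp$ (the inner-product pairing in the displayed computation in Proposition \ref{prop:supp_size_cor} swaps $\mb v \leftrightarrow \mb v^\perp$ appropriately — I'd double-check the exact matching of $\mb v$ and $\mb v^\perp$ here). So we obtain $\R \mb v^\perp + \mb p \subset \mb Y$ and a coset in direction $\mb v$ on $\mb X$, i.e. orthogonal cosets on $\mb X$ and $\mb Y$ — exactly what the hypothesis forbids. Contradiction, so no such $f$ exists for $N$ large. Finally, Theorem \ref{thm:2D_FUP} follows by applying this Proposition to the limiting Cantor sets $\mb X, \mb Y$ (which have measure zero and satisfy the no-orthogonal-cosets hypothesis precisely when (\ref{eq:has_orthogonal_lines}) fails, using Proposition \ref{prop:lines_in_cantor_set} to pass between "line on $\mb X$" and the integer-direction statement), noting $\mc X_k \subset X_{M^k}$ and $\mc Y_k \subset Y_{M^k}$, and then invoking submultiplicativity to upgrade "$\|1_{\mc Y_k}\mc F 1_{\mc X_k}\|_{2\to 2} < 1$ for some $k$" to the exponential decay (\ref{eq:2D_FUP_formula}).

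The main obstacle I anticipate is bookkeeping the two-sided structure correctly: making sure the direction of the line coming out of Lemma \ref{lem:main_lemma} (which constrains $\supp g \subset T$) and the direction along which $\check g$ is forced to be "spread out" (which constrains $\supp \check g \subset \N_R(S)$) are genuinely \emph{orthogonal} to each other, so that the conclusion matches the hypothesis's orthogonal-coset condition, rather than producing two parallel cosets (which would be useless). The other delicate point is the compactness/subsequence step: the direction $\mb v_N$ a priori depends on $N$, and it is essential to first bound $\max(|a_N|,|b_N|)$ by an $N$-independent constant (via Lemma \ref{lem:line_contains_uniformity} applied to whichever of $\mb X, \mb Y$ is relevant) before one can extract a limiting coset; handling the case where that bound fails — which is exactly where the $|S| \gtrsim N^2$ or $|T| \gtrsim N^2$ dichotomy and the $|X_N|/N^2, |Y_N|/N^2 \to 0$ decay (a consequence of measure zero plus dominated convergence) get used — is the part that needs the most care.
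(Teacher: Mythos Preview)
Your overall architecture is right --- apply Lemma~\ref{lem:main_lemma}, split into two cases depending on whether the relevant set carries a coset in the output direction, and use Lemma~\ref{lem:line_contains_uniformity} together with $|X_N|,|Y_N|=o(N^2)$ to close Case~1. The paper does exactly this (applying Lemma~\ref{lem:main_lemma} to $f$ rather than $\hat f$, but that is immaterial).

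The genuine gap is in your Case~2 mechanism. You write that ``the line $\ell_N \subset Y_N$ in direction $\mb v^\perp$ (from Lemma~\ref{lem:main_lemma}) is a genuine line in $Y_N$, which by compactness forces a coset $\R\mb v^\perp + \mb p \subset \mb Y$.'' But Lemma~\ref{lem:main_lemma} does \emph{not} say $\ell \subset T$; it only says $\supp g \subset T \cap \ell$, and $\supp g$ could be a handful of points. So you cannot extract a limiting full coset in $\mb Y$ by compactness, and the orthogonal-pair contradiction never fires. (Your step bounding $\max(|a_N|,|b_N|)$ via ``the sampling argument shows $S$ must be large, $|S|\gtrsim_{\mb Y}N^2$'' is also garbled --- the sampling argument on $\hat g$ constrains $|T|$ via $R=200|T|^{1/2}$ and involves $\mb X$, not $\mb Y$ --- but this is fixable: once you are in Case~2, i.e.\ $\mb X$ contains a coset in direction $\mb v$, Lemma~\ref{lem:line_contains_uniformity} applied to $\mb X$ already forces $\max(|a|,|b|)\le C_{\mb X}$.)

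The paper's Case~2 is completely different and does not use compactness. Having established $\max(|a|,|b|)\le C$ and that (in your normalization) $\mb Y$ contains no coset in direction $\mb v^\perp$, it exploits the fact that $\supp g \subset Y_N \cap \ell$ with $\ell$ in direction $\mb v^\perp$: by Lemma~\ref{lem:line_contains_uniformity} the continuous coset $\R\mb v^\perp + \mb p$ has a point at distance $\ge c$ from $\mb Y$, and since $\max(|a|,|b|)\le C$ this produces an interval $I\subset\Z_N$ of length $\gtrsim (c/C)N$ on which the one-variable function $\tilde g(n)=g(n\mb v^\perp+\mb p)$ vanishes. Then Proposition~\ref{prop:1D_prop} (the 1D multiplier argument) gives $|\supp\hat{\tilde g}|>|I|\gtrsim N$, hence $|\supp\hat g|=N|\supp\hat{\tilde g}|\gtrsim N^2$, which contradicts $\supp\hat g \subset \N_R(X_N)$ and $|\N_{R/N}(\mb X)|\to 0$. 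In short: Case~2 is closed by feeding the one-dimensional Proposition~\ref{prop:1D_prop} into the restriction of $g$ to $\ell$, not by a compactness/subsequence extraction of a coset. Your proposal never invokes Proposition~\ref{prop:1D_prop}, and that is the missing idea.
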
 
The proof involves two cases, see Figure \ref{fig:cases_in_main_FUP_prop}.
\begin{figure}
\begin{subfigure}{0.8\textwidth}
\centering
\includegraphics[width=\linewidth]{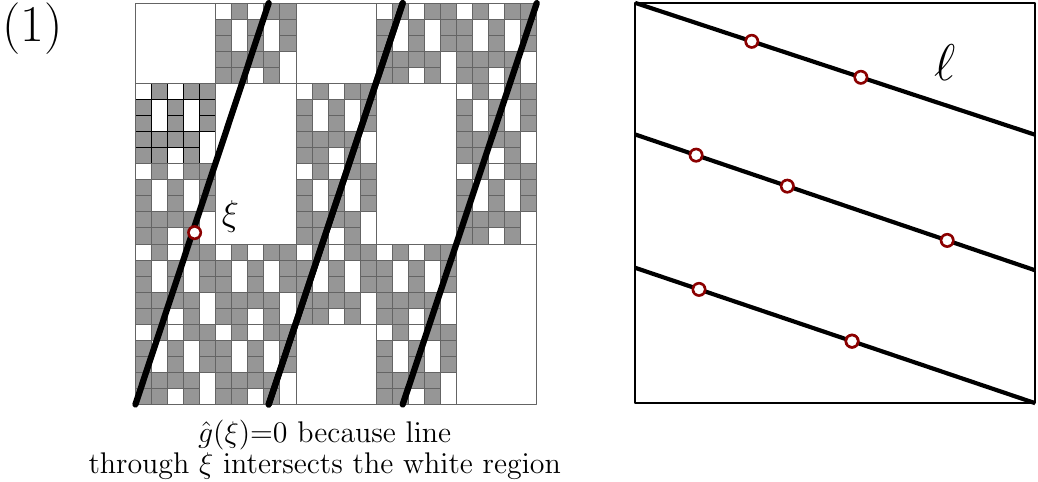}
\end{subfigure}
\\\vspace{0.2in}
\begin{subfigure}{0.8\textwidth}
\centering
\includegraphics[width=\linewidth]{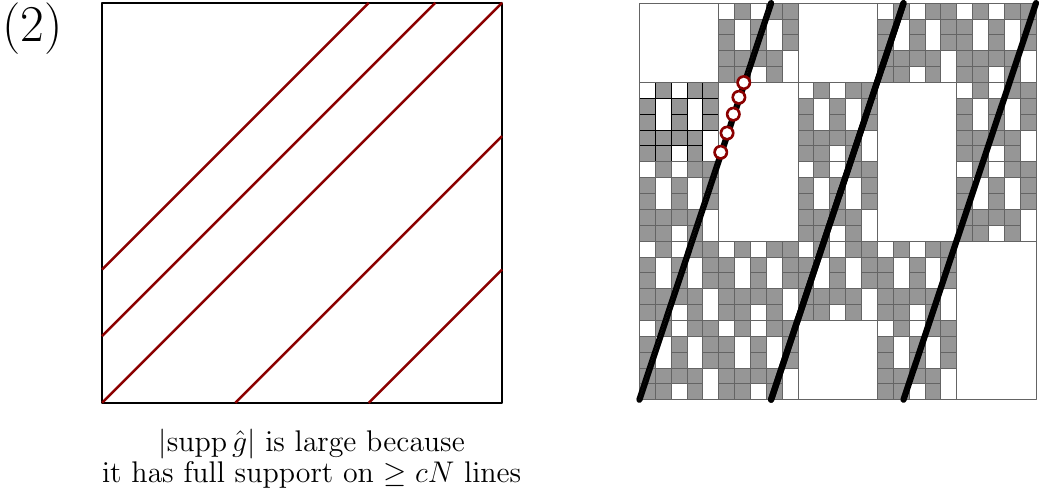}
\end{subfigure}
\caption{The two cases in Proposition \ref{prop:main_FUP_prop} obtain contradictions in different ways.}
\label{fig:cases_in_main_FUP_prop}
\end{figure}

\begin{proof}
First notice that by continuity of measure, 
\begin{equation}\label{eq:measure_to_zero}
	\lim_{r\to 0} |\N_r(\mb X)| = \lim_{r\to 0} |\N_r(\mb Y)| = 0
\end{equation}
where $|\cdot|$ denotes the Lebesgue measure. It follows that
\begin{equation}
	|X_N|, |Y_N| = o(N^2)
\end{equation}
as $N\to \infty$. 

Using the hypothesis and Lemma \ref{lem:line_contains_uniformity}, there is some $c > 0$ such that for every coprime $a,b$, either 
\begin{align}
	\sup_{y \in \R(a,b) + \mb p} d(y, \mb Y) &\geq c \text{ for all $\mb p$, or } \label{eq:y_dist_large} \\
	\sup_{x \in \R(-b,a) + \mb p} d(x, \mb X) &\geq c \text{ for all $\mb p$.} \label{eq:x_dist_large}
\end{align}
There is also some $C > 0$ so that if $\max(|a|, |b|) > C$, then (\ref{eq:y_dist_large}) and (\ref{eq:x_dist_large}) both hold. 

Suppose $\supp f = S \subset X_N$ and $\supp \hat f \subset Y_N$. Apply Lemma \ref{lem:main_lemma} to $f$ to obtain an $R \leq o(N)$, a line 
\begin{align*}
	\ell = \{(x, y)\, :\, ax+by=c\},\quad a,b \text{ coprime},\quad \max(|a|, |b|) \leq R
\end{align*}
and a nonzero $g$ supported on $\ell \cap X_N$ with $\supp \hat g \subset \N_R(Y_N)$. Let $\mb v = (a,b)$, $\mb v^{\perp} = (-b, a)$. 

\par{\textbf{Case 1.}} Suppose (\ref{eq:y_dist_large}) holds. Then we are in the same position as Proposition \ref{prop:supp_size_cor}, and for $N$ large enough we conclude $g = 0$ which is a contradiction.

\par{\textbf{Case 2.}} Suppose (\ref{eq:y_dist_large}) does not hold. Then (\ref{eq:x_dist_large}) holds and $\max(|a|, |b|) \leq C$. 
Choose $\mb p = (p_1, p_2) \in \ell$, so $\ell = \Z\mb v^{\perp} + \mb p$. Write $g(n\mb v^{\perp} + \mb p) = \tilde g(n)$. Then
\begin{align*}
	\hat g(\mb \xi) &= \frac{1}{N}\sum_{n \in \Z_N} \tilde g(n)e^{-\frac{2\pi i}{N} \mb \xi \cdot (n\mb v^{\perp} + \mb p)} \\ 
	&= e^{-\frac{2\pi i}{N} \mb \xi \cdot \mb p} N^{-1}\sum_{n\in \Z_N} \tilde g(n) e^{-\frac{2\pi i}{N} n \mb \xi \cdot \mb v^{\perp}}
\end{align*}
Notice in particular that $\hat g$ only depends on $\mb \xi \cdot \mb v^{\perp}$. By Lemma \ref{lem:lines_gen_one_elem}, for every $d \in \Z_N$ there are $N$ solutions in $\mb \xi$ to $\mb \xi \cdot \mb v^{\perp} = d$. So we may write 
\begin{align*}
	\hat g(\mb \xi) &= \frac{1}{\sqrt{N}}e^{-\frac{2\pi i}{N} \mb \xi \cdot \mb p} \hat {\tilde g}(\mb \xi \cdot \mb v^{\perp}) = \frac{1}{N}\sum_{n\in \Z_N} \tilde g(n) e^{-\frac{2\pi i}{N} n \mb \xi \cdot \mb v^{\perp}} \\ 
	|\hat g(\mb \xi)| &= \frac{1}{\sqrt{N}}|\hat{\tilde g}(\mb \xi \cdot \mb v^{\perp})|
\end{align*}
Thus $|\supp \hat g| = N |\supp \hat {\tilde g}|$. 

Choose $t \in \R$ so that $d(t\mb v^{\perp}/N + \mb p/N, \mb X) \geq c.$. 
Then
\begin{align*}
	d(s\mb v^{\perp}/N + \mb p/N, \mb X) &\geq c - |s-t|\frac{C}{N} \\ 
	&\geq c/2 \quad \text{ for } |s-t| \leq \frac{c}{2C}N.
\end{align*}
If $s$ is an integer satisfying the above and $N > \frac{100}{c}$, we conclude that $s\mb v^{\perp} + \mb p \notin X_N$. 

 Let $I = [t - (c/2C)N, t+(c/2C)N] \cap \Z$. Then $|I| \geq \frac{c}{C} N$ and $\tilde g|_I = 0$. By Proposition \ref{prop:1D_prop}, 
\begin{align*}
	|\supp \hat {\tilde g}| = N|\hat {\tilde g}| \geq \frac{c}{C} N^2.
\end{align*}
On the other hand, $|Y_N| \leq o(N^2)$, leading to a contradiction for large enough $N$. 
\end{proof}


\begin{remark}
Although Proposition \ref{prop:main_FUP_prop} applies to arbitrary fractal sets, Theorem \ref{thm:2D_FUP} only applies to Cantor sets, because we need submultiplicativity in order to prove exponential decay (\ref{eq:2D_FUP_formula}). 
\end{remark}

\begin{remark}
Let us note a quantitative difference between cases 1 and 2 above. In case 1 the coefficients $a,b$ determining the line $\ell$ should have size $o(N)$ in order to obtain a contradiction. In case 2, the cofficients $a,b$ must have size $O(1)$. Lemma \ref{lem:main_lemma} only gives $a,b = o(N)$, so we argue that if $\max(|a|,|b|) > C$ we can use Case 1, and Case 2 only arises when $\max(|a|,|b|) \leq C$. 
\end{remark}

Now we can conclude Theorem \ref{thm:2D_FUP}.

\begin{proof}[Proof of Theorem \ref{thm:2D_FUP}]
Suppose $\mc A$ and $\mc B$ are alphabets satisfying the condition of Theorem \ref{thm:2D_FUP}. The Cantor sets they generate, $\mb X$ and $\mb Y$, satisfy the conditions of Proposition \ref{prop:main_FUP_prop}. Indeed, $\mb X, \mb Y$ have dimension $< 2$, so certainly $|\mb X| = |\mb Y| = 0$. 

Let $\mc X_k, \mc Y_k \subset \Z_N^2$, $N = M^k$, be the $k$th Cantor iterates. Then $\mc X_k \subset X_{N}$, $\mc Y_k \subset Y_N$, where $X_N$ and $Y_N$ are obtained by discretizing $\mb X, \mb Y$ as in \S\ref{sec:disc_of_fractal_set}. By Proposition \ref{prop:main_FUP_prop}, for $N$ large enough there is no $f: \Z_N^2\to \C$ with $\supp f \subset X_N$ and $\supp \hat f \subset Y_N$. Thus for $k$ large enough, there is no $f$ with $\supp f \subset \mc X_k$ and $\supp \hat f \subset Y_k$. For this $k$, 
\begin{align*}
	\|1_{\mc Y_k} \mc F1_{\mc X_k}\|_{2 \to 2} < 1
\end{align*}
and so by submultiplicativity we conclude 
\begin{align*}
	\|1_{\mc Y_k} \mc F1_{\mc X_k}\|_{2 \to 2} \lesssim M^{-k\beta}
\end{align*}
for some $\beta > 0$. 
\end{proof}

\section{Proof of the main Lemma}\label{sec:main_lemma}
In 1965 Lang conjectured \cite{Lang} that if $C$ is an irreducible algebraic curve in $\C^{\times n}$ with infinitely many cyclotomic points---that is, points $(z_1, \ldots, z_n) \in C$ all of which are roots of unity---then $C$ is a translate of a subgroup of $\C^{\times n}$ by a root of unity \cite{GranvilleRudnick}. 

The key ingredient in proving Lemma \ref{lem:main_lemma} is the following Theorem of Ruppert \cite{Ruppert}*{Corollary 5} and Beukers \& Smyth \cite{BeukersSmyth}*{Theorem 4.1}, which can be viewed as a sharp quantitative form of Lang's conjecture in two dimensions.
\begin{theorem}[Ruppert 1993, Beukers-Smyth 2002]\label{thm:beukers_smyth}
Let
\begin{equation}\label{eq:deg_leq_D_poly}
	F(z,w) = \sum_{0 \leq k,l \leq D} a_{kl}z^k w^l{}
\end{equation}
be a polynomial in $\C[z,w]$ with degree at most $D$ in $z$, $w$ seperately. Then $F$ has either at most $22 D^2$ cyclotomic points, or infinitely many. In the latter case $F$ has an irreducible factor 
\begin{equation}\label{eq:poly_good_form}
	z^a w^b - \zeta\quad \text{ or }\quad z^a - \zeta w^b
\end{equation}
 for some root of unity $\zeta$ and coprime integers $a,b$.
\end{theorem}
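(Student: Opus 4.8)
The plan is to recover Theorem~\ref{thm:beukers_smyth} by the argument of Ruppert and of Beukers--Smyth, whose engine is the observation that a cyclotomic point, being a torsion point of the torus $(\C^\times)^2$, is pushed onto the intersection of the curve $C=\{F=0\}$ with a second curve of comparable bidegree, so that a B\'ezout count applies unless $C$ is itself a torsion coset $\{z^aw^b=\zeta\}$. First I would reduce to $F$ irreducible: after clearing any monomial factor (which has no zero in $(\C^\times)^2$), write $F=\prod_i F_i$; the cyclotomic zeros of $F$ are the union of those of the $F_i$, and since $\sum_i\deg_z F_i\le D$, $\sum_i\deg_w F_i\le D$, and $\sum_i(\deg_z F_i)(\deg_w F_i)\le(\sum_i\deg_z F_i)(\sum_i\deg_w F_i)$, a per-factor dichotomy---``$\le 22(\deg_z F_i)(\deg_w F_i)$ cyclotomic zeros, or $F_i$ is of the form (\ref{eq:poly_good_form})''---assembles into the statement for $F$. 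We may also assume $F$ is not a monomial times a binomial of the listed shape, as that is the conclusion; note that an irreducible $F$ cuts out a torsion coset precisely when only two monomials occur, supported on a primitive lattice segment, with the ratio of the two coefficients a root of unity.

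Next, the pairing. Put $\tilde F(z,w)=z^{\deg_z F}w^{\deg_w F}\,\overline F(1/z,1/w)$, where $\overline F$ has complex-conjugated coefficients; after the monomial clearing this again has bidegree $\le(D,D)$ with $\deg_z\tilde F=\deg_z F$, $\deg_w\tilde F=\deg_w F$. If $(\zeta_1,\zeta_2)$ is a cyclotomic zero of $F$, then $\overline{\zeta_i}=\zeta_i^{-1}$ gives $0=\overline{F(\zeta_1,\zeta_2)}=\overline F(\zeta_1^{-1},\zeta_2^{-1})$, so $(\zeta_1,\zeta_2)$ is also a zero of $\tilde F$; hence every cyclotomic zero lies in $\{F=0\}\cap\{\tilde F=0\}$. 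If $F$ and $\tilde F$ are coprime, the bidegree B\'ezout inequality (Theorem~\ref{thm:bezout_theorem}) bounds this intersection by $\deg_z F\cdot\deg_w\tilde F+\deg_z\tilde F\cdot\deg_w F\le 2D^2$, comfortably below $22D^2$, and we are done. If not, then $F$ irreducible with $\deg\tilde F=\deg F$ forces $\tilde F=\lambda F$, i.e.\ $C$ is invariant under $\iota(z,w)=(z^{-1},w^{-1})$.

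The self-inversive case is the crux, since complex conjugation now yields nothing new and one must invoke the multiplicative symmetry $\zeta\mapsto\zeta^k$ with $\gcd(k,\operatorname{ord}\zeta)=1$. A standard reduction first lets us assume $F$ has algebraic coefficients---finitely many of the (algebraic) cyclotomic zeros already determine the finite-dimensional space of bidegree-$(D,D)$ curves through them, a system of linear conditions defined over $\overline\Q$---so the absolute Galois group acts, and a cyclotomic zero of order $n$ keeps its conjugates $(\zeta_1^k,\zeta_2^k)$ on $C$ for $k$ ranging over a subgroup of $(\Z/n)^\times$ of index bounded in terms of the coefficient field. Intersecting $C$ with $\{F(z^k,w^k)=0\}$ (equivalently, pulling $C$ back along the isogeny $[k]$) and running B\'ezout again shows that an $\iota$-invariant $C$ which is not a torsion coset can contain only boundedly many cyclotomic points; as we are in the case of infinitely many, $C$ must be a torsion coset, so $F$ is a monomial times one of the binomials in (\ref{eq:poly_good_form}).

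I expect the main obstacle to be exactly this last step: turning ``$\iota$-invariant with infinitely many cyclotomic points'' into ``torsion coset'' requires the genuine arithmetic of roots of unity, and making the resulting B\'ezout counts uniform---controlling at once the index of the good subgroup of $(\Z/n)^\times$ and the bidegree of $\{F(z^k,w^k)=0\}$---is the bookkeeping responsible for the explicit constant $22$. Ruppert's route avoids the Galois input entirely: from $F$ and its logarithmic derivatives $z\,\partial_z F$, $w\,\partial_w F$ he forms an auxiliary polynomial $R_F$ of bidegree $O(D)$ that vanishes identically precisely when $C$ is a torsion coset, relates the torsion points of $C$ to the zero set $\{R_F=0\}$, and applies B\'ezout to $R_F$ and $F$. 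I would use whichever bookkeeping is cleanest for each alternative---the conjugate-reciprocal pairing for the coprime case, Ruppert's differential polynomial for the degenerate one---to land on the stated bound, leaving B\'ezout to convert the arithmetic input into the quantitative count.
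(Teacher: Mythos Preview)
Your plan diverges from the paper's argument and leaves the decisive step unexecuted. The paper does not split off the conjugate--reciprocal curve $\tilde F$ first; instead it uses a single arithmetic lemma (Lemma~\ref{lem:root_of_unity_conj}): every root of unity $\zeta$ is Galois conjugate over $\Q$ to exactly one of $-\zeta$, $\zeta^2$, $-\zeta^2$. From this one builds seven fixed companion polynomials
\[
F(-z,w),\ F(z,-w),\ F(-z,-w),\ F(\pm z^2,\pm w^2),
\]
each of total degree at most $2D$, and---when $\mc L(F)=\Z^2$---none of which has $F$ as a factor. Every cyclotomic zero of $F$ lies on one of these seven curves, and B\'ezout yields the count $3D^2+4\cdot 2D^2=11D^2$, which becomes $22D^2$ under the bidegree convention of the statement. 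The case analysis in the paper concerns the coefficient field of $F$ (rational, non-abelian, and the intermediate abelian case), not whether $F$ is self-inversive.

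Your opening move with $\tilde F(z,w)=z^{\deg_z F}w^{\deg_w F}\overline F(1/z,1/w)$ exploits only the single Galois element $\zeta\mapsto\zeta^{-1}$. That does dispatch non-self-inversive $F$ with the pleasant bound $2D^2$, but self-inversive curves are the generic case (every $F$ with rational coefficients and palindromic Newton polygon qualifies), so essentially all of the content sits in your ``crux'' paragraph. There you propose intersecting with $\{F(z^k,w^k)=0\}$, but this curve has bidegree $kD$ with $k$ depending on the order $n$ of the torsion point, so B\'ezout gives a bound growing with $n$; what is missing is precisely the uniform input that collapses the range of $k$ to $\{1,2\}$ with sign changes, namely Lemma~\ref{lem:root_of_unity_conj}. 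Without that lemma---or a concrete execution of Ruppert's differential auxiliary, which you only name---the self-inversive case is not closed and the constant $22$ has no source.
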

We note that $z^aw^b - \zeta$ or $z^a - \zeta w^b$ is only irreducible if $a$ and $b$ are coprime integers, which is why that is part of the conclusion. In their paper Beukers \& Smyth actually prove significantly more, they give an algorithm to compute this factor. 
The approach is to find seven polynomials $F_1, \ldots, F_7$ so that every cyclotomic root of $F$ is also a root of some $F_j$, and then apply Bezout's inequality to bound their pairwise intersection, see \S\ref{sec:beukers-smyth-argument} for sketch. In what follows
\begin{equation}
	\deg F = \max_{a_{kl} \neq 0} \max(|k|, |l|),\quad F(z,w) = \sum_{k,l} a_{kl} z^kw^l
\end{equation}
so that (\ref{eq:deg_leq_D_poly}) is the general form of a polynomial with degree $\leq D$.

Recall that we can embed $\T^2 \to \C^{\times 2}$ via
\begin{align*}
	(x, y) \mapsto (e^{2\pi i x}, e^{2\pi i y}). 
\end{align*}
The cyclotomic points in $\C^{\times 2}$ are precisely the image of $(\Q/\Z)^2$. 
For $F(z,w)$ a polynomial, we let 
\begin{align}
	\ZZ(F) &= \{(x,y) \in \T^2\, :\, F(e^{2\pi i x}, e^{2\pi i y}) = 0\},\nonumber \\ 
	\ZZ_N(F) &= \{(x,y) \in \Z_N^2\, :\, F(e^{\frac{2\pi i}{N} x}, e^{\frac{2\pi i}{N} y}) = 0\}. \label{eq:Z_N_eq}
\end{align}
If we view $\Z_N^2$ as the subgroup of $\T^2$ given by
\begin{equation*}
	\Z_N^2 \cong \T_N^2 = \left\{\Bigl(\frac{x}{N}, \frac{y}{N}\Bigr)\in \T^2\, |\, x, y \in \Z\right\} 	
\end{equation*} 
then $\ZZ_N(F) = \ZZ(F) \cap \T_N^2$.  
We say that a polynomial $F$ of the form (\ref{eq:poly_good_form}) \textit{cuts out a line} because 
\begin{equation*}
	\ZZ(F) = \{(x,y) \in \T^2\, :\, ax+by = c\} \text{ or } \ZZ(F) = \{(x,y) \in \T^2\, :\, ax-by=c\},
\end{equation*}
with $a,b \geq 0$ integers and $c \in \Q$. If $c = \frac{c'}{N}$, $c' \in \Z$, then we say $\ell$ cuts out a line in $\Z_N^2$. Conversely, suppose
\begin{align*}
	\ell = \{(x,y) \in \Z_N^2\, :\, ax+by=c \pmod N\}
\end{align*}
is an irreducible line. By Lemma \ref{lem:lines_gen_one_elem}, $a, b$ can be taken as coprime integers. Then
\begin{align*}
	\ell = \ZZ_N(P_{\ell}),\quad P_{\ell}(z,w) = \begin{cases}
		z^aw^b - e^{2\pi i c/N} & a,b \geq 0 \\ 
		z^a - e^{2\pi i c/N}w^{|b|} & a \geq 0,\ b < 0
	\end{cases}
\end{align*}
and $P_{\ell}$ is an irreducible polynomial with $\deg P_{\ell} \leq 2\|\ell\|$. 
Theorem \ref{thm:beukers_smyth} is related to Lemma \ref{lem:main_lemma} because functions $g: \Z_N^2 \to \C$ with $\supp \hat g \subset [0,D] \times [0,D]$ have values given by polynomials at cyclotomic points: 
\begin{equation*}
	g(x,y) = \frac{1}{N}\sum_{0 \leq k, l \leq D} \hat g(k,l) z^k w^l,\quad z = e^{\frac{2\pi i}{N} x}, w = e^{\frac{2\pi i}{N} y}.
\end{equation*}

Lemma \ref{lem:main_lemma} is a quick consequence of the following. We don't try to optimize the constant $200$.

\begin{lemma}\label{lem:polynomials_through_sets}
Let $S \subset \Z_N^2$ be an arbitrary nonempty set. Then there is a polynomial $F^*$ with $\deg F^* < 200|S|^{1/2} - 1$ so that $S - \ZZ_N(F^*)$ is nonempty and lies on an irreducible line $\ell$ with $\|\ell\| \leq 200|S|^{1/2}$. 
\end{lemma}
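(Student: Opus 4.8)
The plan is to build the polynomial $F^*$ greedily by starting from the polynomial $1$ (which vanishes nowhere) and repeatedly multiplying by low-degree factors that kill large chunks of $S$, stopping once what remains of $S$ is confined to a single line. Concretely, I would maintain a polynomial $F$ and track the set $S_F := S \cap \ZZ_N(F)$ of points of $S$ killed so far, together with its complement $S' := S \setminus S_F$. At each stage, if $S'$ still contains two points not lying on a common irreducible line of size $\lesssim |S|^{1/2}$, I want to find a \emph{single} low-degree factor whose zero set in $\Z_N^2$ contains many points of $S'$; multiplying $F$ by it enlarges $S_F$. The degree of $F^*$ is then controlled by summing the degrees of the factors used, and the stopping condition gives exactly the conclusion: $S - \ZZ_N(F^*) = S'$ is nonempty and lies on an irreducible line $\ell$ with $\|\ell\| \leq 200|S|^{1/2}$.

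The key input for producing one good factor is Theorem~\ref{thm:beukers_smyth}. Given a target "budget" $D \sim |S|^{1/2}$, I would argue that one of two things happens: either there is a nonzero polynomial $G$ of degree $\leq D$ vanishing on \emph{all} of $S'$ — this exists by linear algebra as soon as $(D+1)^2 > |S'|$, i.e. when $|S'|$ has dropped below $\sim D^2$ — in which case by Theorem~\ref{thm:beukers_smyth} either $|\ZZ_N(G)| \le |\ZZ(G) \cap (\Q/\Z)^2|$ is finite and bounded by $22D^2$, or $G$ has a factor of the form $(\ref{eq:poly_good_form})$ cutting out a line. Choosing the numerical constant $200$ generously relative to $\sqrt{22}$, the "finite" alternative forces $|S'| \le 22 D^2 < $ (something already contradicting our running count), so we are in the line alternative and can read off the irreducible line $\ell$ with $\deg P_\ell \le 2\|\ell\| \le D$, hence $\|\ell\| \le 200|S|^{1/2}$ as required. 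While $|S'|$ is still large (above $\sim D^2$), I instead pick any polynomial of degree $\leq D$ vanishing on a maximal-size subset of $S'$ — again by counting, such a polynomial kills $\gtrsim D^2$ new points — or more simply use a pigeonhole/averaging argument (e.g. a random low-degree curve, or a product of a few lines through clusters of $S'$) to guarantee each factor removes a definite fraction, say at least one $D$-th, of the remaining points.

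To make the degree bookkeeping clean I would actually avoid the iterative removal and instead do it in essentially one shot: among all polynomials of degree $\le D := \lfloor 200|S|^{1/2}\rfloor - 1$ that do not vanish identically on $S$... no — the cleanest route is the following dichotomy applied once with $D$ chosen so $(D+1)^2 > |S|$, which forces existence of $0 \ne G$, $\deg G \le D$, vanishing on all of $S$ (here we do \emph{not} want that, since then $S - \ZZ_N(G) = \emptyset$). So instead choose $G \ne 0$ of degree $\le D$ with $|S \cap \ZZ_N(G)|$ \emph{maximal} subject to $S \not\subset \ZZ_N(G)$; if the leftover $S' = S \setminus \ZZ_N(G)$ had two points $\mb p_1 \ne \mb p_2$ off every short line, one could multiply $G$ by $P_\ell$ for a line $\ell$ through $\mb p_1$ with small $\|\ell\|$ (such $\ell$ exists — any line through $\mb p_1$ has $\|\ell\| \le N$, but we need $\le D$, so actually we need $\mb p_2$ on it to force smallness) ... this is circular, which is precisely why the iterative greedy version with the Theorem~\ref{thm:beukers_smyth} stopping criterion is the right framework.

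\textbf{Main obstacle.} The delicate point is the degree accounting in the greedy loop: I must ensure that each multiplication by a factor of degree $d_i$ removes enough points of $S'$ that $\sum_i d_i$ stays below $200|S|^{1/2}$, \emph{and} that the final factor produced by Theorem~\ref{thm:beukers_smyth} is an honest single line rather than a union (which is where irreducibility of $(\ref{eq:poly_good_form})$ and the bound $\deg P_\ell \le 2\|\ell\|$ are used). Balancing "each step removes $\gtrsim D$-many or $\gtrsim D^2$-many points" against "total degree $\lesssim D$" is exactly a geometric-series / amortization estimate, and getting the constant down to $200$ (rather than something much larger) is where the slack in $\sqrt{22} \approx 4.7$ versus $200$ gets spent. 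I expect the rest — existence of low-degree interpolating polynomials by dimension counting, translating $\ZZ_N$ statements to and from $\T^2$, and invoking Lemma~\ref{lem:lines_gen_one_elem} to normalize $(a,b)$ to coprime integers — to be routine.
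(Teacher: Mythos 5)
Your proposal correctly identifies the main ingredients --- the linear-algebra interpolation bound, the dichotomy of Theorem~\ref{thm:beukers_smyth}, Bezout's inequality, and a geometric-series degree count --- and the overall shape (multiply together low-degree factors until the leftover sits on one short line) matches the paper's. But the proposal stops exactly where the real work begins, and the difficulty you flag as ``circular'' at the end of your third paragraph is genuine and unresolved. The problem is this: once $|S'|$ is small enough that some nonzero $G$ of degree $\le D$ vanishes on all of $S'$, Theorem~\ref{thm:beukers_smyth} applied to a \emph{reducible} $G$ only tells you that \emph{some} irreducible factor of $G$ cuts out a line; it does not tell you that $S'$ lies on that line. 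The set $S'$ can be spread over several irreducible components of $\ZZ_N(G)$, only one of which need be a line. The paper resolves this with a case analysis on the \emph{minimal-degree} polynomial $F$ vanishing on $S$ (chosen, among those of minimal degree, with the fewest irreducible factors): if $F$ is irreducible and does not cut out a line, then $|S|\le 22(\deg F)^2$ forces $\deg F\gtrsim |S|^{1/2}$, so a curve of degree $\deg F-1$ passes through a fixed fraction ($\ge |S|/40$) of $S$ but --- by minimality of $\deg F$ --- not all of it, and one recurses on the nonempty remainder; if $F=GH$ is reducible, one multiplies by $H$ and recurses on the part of $S$ lying on $\ZZ_N(G)$ but not $\ZZ_N(H)$, which is nonempty by minimality of the number of factors and has size $\le|S|/2$. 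These minimality choices are what guarantee that $S-\ZZ_N(F^*)$ is never accidentally emptied out --- a point your greedy loop does not address.

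Two further gaps: (i) your degree bookkeeping is only asserted; in the paper it is the induction hypothesis applied to a set of size $\le(1-1/40)|S|$ or $\le|S|/2$, giving $200\sqrt{1-1/40}\,|S|^{1/2}+\deg F$ resp.\ $200(|S|/2)^{1/2}+\deg H$, both of which land under $200|S|^{1/2}$ --- the slack in the constant $200$ is consumed precisely by the ratio $\sqrt{1-1/40}$ coming from the $22D^2$ bound, not by $\sqrt{22}$ versus $200$ as you suggest; and (ii) you need a base case ($|S|\le 200$ in the paper, handled by a product of at most $|S|$ vertical-line factors $z-e^{2\pi i x_j/N}$), since the fixed-fraction estimate $(D-1)^2\ge|S|/40$ requires $D\ge 4$. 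As written, the proposal is a plausible plan whose central step --- extracting a single irreducible line from a possibly reducible vanishing polynomial while keeping the leftover nonempty --- is missing.
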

We prove the slightly awkward bound $\deg F^* < 200|S|^{1/2} - 1$ in order to make the proof and application of Lemma \ref{lem:main_lemma} cleaner. 
Before proving this Lemma, it is helpful to consider how it could fail to be true. Consider a quadratic polynomial
\begin{equation*}
	G(z,w) = a+bz+cw+fzw+dz^2+ew^2
\end{equation*}
which does not cut out a line (e.g. $G$ is not of the form $z = w^2$). 
Theorem \ref{thm:beukers_smyth} says that $|\ZZ_N(G)| \leq 44$ for all $N$ (the quadratic polynomial $G$ cannot pass through many cyclotomic points). Ignoring this fact for a moment, it turns out that if for some $G, N$, $|\ZZ_N(G)| > 1200^2$, Lemma \ref{lem:polynomials_through_sets} would fail. 

Let $S = \ZZ_N(G)$. Suppose $F^*$ is a polynomial of degree $\leq 200|S|^{1/2}$ such that $S - \ZZ_N(F^*)$ is nonempty and lies on a line $\ell$ with $\|\ell\| \leq 200|S|^{1/2}$. 
The polynomial $G$ cannot be a component of $F^*$, because that would mean $S \subset \ZZ_N(F^*)$. So by Bezout's inequality (\ref{thm:bezout_theorem}), 
\begin{align*}
	|\ZZ_N(F^*) \cap S| \leq 2 \deg F^* \leq 400|S|^{1/2}.
\end{align*}
If $\ell$ is a line with $\|\ell\| \leq 200|S|^{1/2}$, then
\begin{align*}
	|\ell \cap S| = |\ZZ_N(P_{\ell}) \cap S| \leq 2 \deg P_{\ell} \leq 4\|\ell\| \leq 800|S|^{1/2}
\end{align*}
again by Bezout's inequality. 
Thus if $S - \ZZ_N(F^*)$ lies on such a line $\ell$,
\begin{align*}
	|S| - 800|S|^{1/2} \leq 400|S|^{1/2} \Rightarrow |S| \leq 1200^2
\end{align*}
as claimed. 
Before proving Lemma \ref{lem:polynomials_through_sets} we need another lemma. 
\begin{lemma}\label{lem:poly_through_set_fact}
For every nonempty set $S \subset \C^2$, $D = \lfloor |S|^{1/2} \rfloor$, there is a nonzero polynomial $F(z,w) = \sum_{0 \leq k,l \leq D} a_{kl}z^kw^l$ vanishing on $S$.
\end{lemma}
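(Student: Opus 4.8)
The plan is to prove Lemma \ref{lem:poly_through_set_fact} by a dimension count: the space of polynomials $F(z,w) = \sum_{0 \leq k,l \leq D} a_{kl}z^kw^l$ has dimension $(D+1)^2$ over $\C$, while the condition that $F$ vanish at all points of $S$ is a system of $|S|$ homogeneous linear equations in the coefficients $a_{kl}$. So first I would observe that $(D+1)^2 = (\lfloor |S|^{1/2}\rfloor + 1)^2 > |S|^{1/2} \cdot |S|^{1/2} = |S|$, which gives strictly more unknowns than equations. A homogeneous linear system with more unknowns than equations always has a nonzero solution, so there is a nonzero choice of coefficients $(a_{kl})$ making $F$ vanish on all of $S$, which is exactly the claim.

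There is essentially one thing to be careful about: the inequality $(D+1)^2 > |S|$ must be strict so that the solution space has positive dimension and hence contains a \emph{nonzero} polynomial. With $D = \lfloor |S|^{1/2}\rfloor$ we have $D+1 > |S|^{1/2}$ (since $D \geq |S|^{1/2} - 1$ is not quite enough on its own, but $D+1 > |S|^{1/2}$ holds because $D \geq |S|^{1/2}$ would force $D^2 \geq |S|$, and in the boundary case $|S|$ a perfect square we get $D = |S|^{1/2}$, so $D+1 = |S|^{1/2}+1 > |S|^{1/2}$ anyway). Hence $(D+1)^2 > |S|$ in all cases, and the number of free parameters strictly exceeds the number of linear constraints.

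I don't anticipate any real obstacle here — this is the standard interpolation/counting argument and the only subtlety is the off-by-one bookkeeping in comparing $(D+1)^2$ with $|S|$, which I would state cleanly up front. It is worth noting that this lemma is where the eventual degree bound $\deg F^* \lesssim |S|^{1/2}$ comes from: the polynomial produced has degree at most $D = \lfloor |S|^{1/2}\rfloor$ in each variable. The subsequent Lemma \ref{lem:polynomials_through_sets} will presumably iterate or combine such polynomials (peeling off factors of the form \eqref{eq:poly_good_form} using Theorem \ref{thm:beukers_smyth}, and controlling pairwise intersections by Bezout as in the heuristic preceding this lemma), but Lemma \ref{lem:poly_through_set_fact} itself requires nothing beyond linear algebra.
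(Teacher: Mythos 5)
Your proof is correct and is essentially identical to the paper's: both set up the evaluation map from the $(D+1)^2$-dimensional coefficient space to $\C^{|S|}$ and invoke rank–nullity (equivalently, a homogeneous system with more unknowns than equations) to get a nonzero polynomial, using that $\lfloor |S|^{1/2}\rfloor + 1 > |S|^{1/2}$ forces $(D+1)^2 > |S|$. No further comment is needed.
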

\begin{proof}
Consider the linear map taking 
\begin{equation*}
(a_{kl})_{0 \leq k,l\leq D} \mapsto \left( \sum_{kl} a_{kl} z^kw^l\, :\, (z,w) \in S\right).
\end{equation*}
If $(D+1)^2 > |S|$ then by rank nullity this has a nontrivial kernel, which is our desired polynomial $F$. Thus we may take $D = \lfloor |S|^{1/2} \rfloor$
\end{proof}
The proof of Lemma \ref{lem:polynomials_through_sets} involves four cases, see Figure \ref{fig:poly_through_set_pf_vis}.
\begin{figure}
\includegraphics[width=0.8\linewidth]{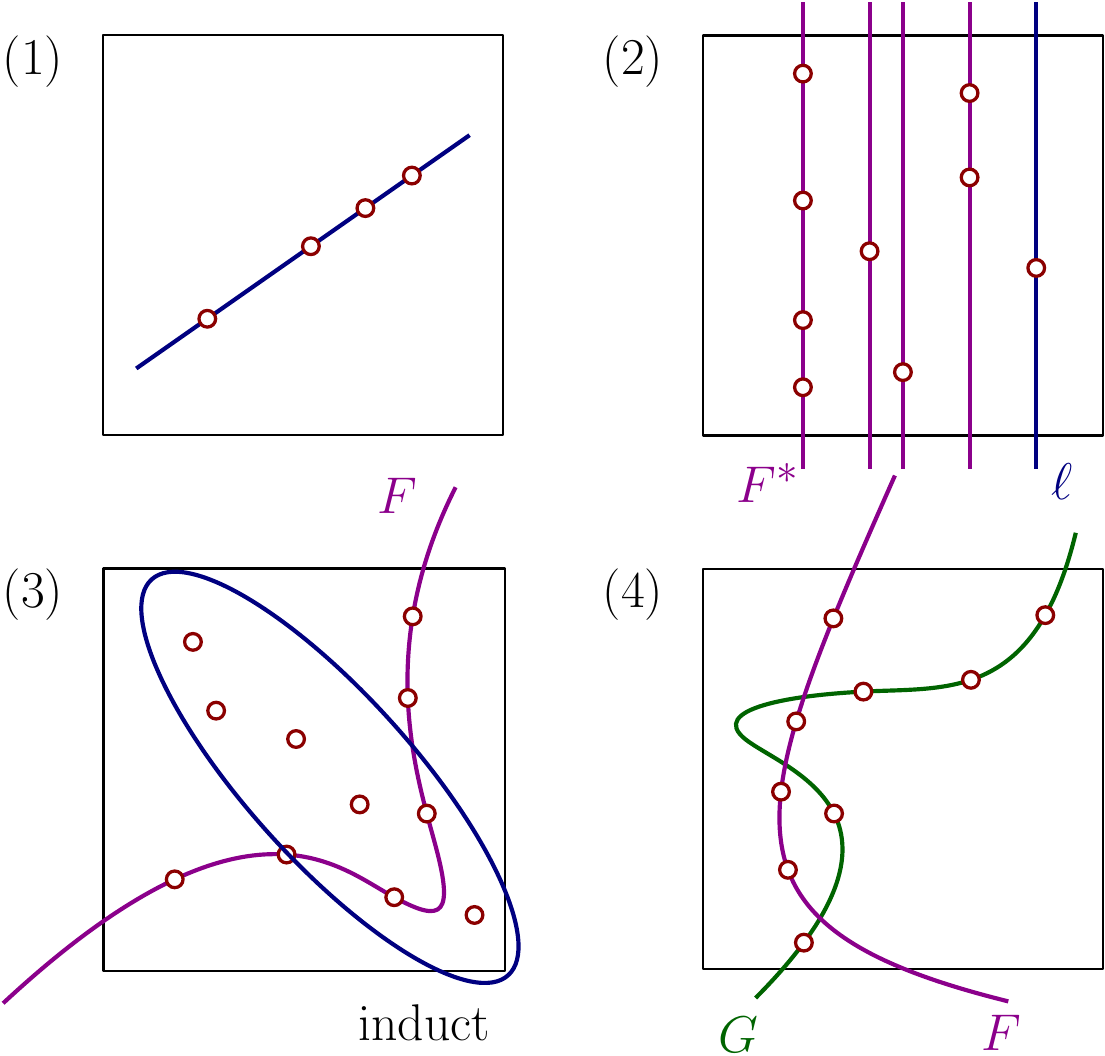}
\caption{Cases in the proof of Lemma \ref{lem:polynomials_through_sets}}.
\label{fig:poly_through_set_pf_vis}
\end{figure}

\begin{proof}[Proof of Lemma \ref{lem:polynomials_through_sets}]
We give a recursive algorithm to find our polynomial $F^*$. Mathematically this is phrased as induction on the size of $S$. For ease of presentation we prove we can take $\deg F^* \leq 200|S|^{1/2}$, but the same argument can be optimized to give $\deg F^* \leq 198|S|^{1/2}$ yielding the claim $\deg F^* < 200|S|^{1/2} - 1$. 

Let $F$ be a polynomial of minimal degree $D$ with $S \subset \ZZ_N(F)$. We have $D \leq |S|^{1/2}$ by Lemma \ref{lem:poly_through_set_fact}. If there are several such polynomials, choose one with the minimal number of irreducible factors.  

\par{\textbf{Case 1}: $F$ cuts out a line $\ell$.} In this case 
\begin{align*}
	\ZZ_N(F) = \Bigl\{(x,y) \in \Z_N^2\, :\, \frac{ax}{N}+\frac{by}{N}=c\Bigr\},\ c \in \Q.
\end{align*}
Because $S$ is nonempty there is some $(x_0, y_0) \in S$ with $c = (ax_0+by_0)/N$. So $F$ cuts out a line $\ell$ in $\Z_N^2$, and 
\begin{align*}
	\ZZ_N(F) &= \ell = \{(x,y) \in \Z_N^2\, :\, ax+by=ax_0+by_0\}, \\ 
	\|\ell\| &\leq \deg F \leq |S|^{1/2}. 
\end{align*}
Thus we are already done---we may take $F^* = 1$, and $S$ already lies on a desired line $\ell$. 

\par{\textbf{Case 2}: $|S| \leq 200$.} Let $S = \{(x_k, y_k) \in \Z_N^2\}$, and $\{x_1, \ldots, x_m\}$ be the distinct $x$-coordinates appearing in $S$. If $m = 1$, we are in case 1. Otherwise, set 
\begin{equation*}
	F^* = (z - e^{2\pi i x_1/N})\dots(z-e^{2\pi i x_{m-1}/N}).
\end{equation*}
Then $\deg F^* < 200 \leq 200|S|^{1/2}$, and 
\begin{align*}
	S - \ZZ_N(F^*) \subset \{x = x_m\}
\end{align*}
lies on a line. 

\par{\textbf{Case 3}: $F$ is irreducible but does not cut out a line}. In this case, $|S| \leq 22 D^2$ by Theorem \ref{thm:beukers_smyth}. Because $|S| \geq 200$, $D \geq 4$. Choose a curve $G$ of degree $D-1$ passing through at least 
\begin{equation*}
	(D-1)^2 \geq \frac{1}{22}|S| (1-1/D)^2 \geq \frac{|S|}{40}
\end{equation*}
points of $S$. Let $A = S\cap \ZZ_N(G)$. Notice $S - A$ is nonempty by the minimality of $D$. Now apply the inductive hypothesis to find a polynomial $H$ passing through all but one line of $S - A$ with $\deg H \leq 200|S-A|^{1/2}$, and set $F^* = GH$. We have 
\begin{equation*}
	\deg G \leq 200\sqrt{|S|(1-1/40)} + D-1 \leq |S|^{1/2}(198 + 1) \leq 200|S|^{1/2}
\end{equation*}
as needed. 

\par{\textbf{Case 4}: $F$ is reducible.} 
Let $F = GH$ where neither $G$ nor $H$ are scalars and $|\ZZ_N(G)| \leq |\ZZ_N(H)|$. 
Let $T = \ZZ_N(G) - \ZZ_N(H)$. Because $\deg H \leq \deg F$ and $H$ has fewer irreducible factors than $F$, $S \not\subset \ZZ_N(H)$, so $T$ is nonempty. 
Using the inductive hypothesis we may find a polynomial $P$ passing through all but one line of the set $T = \ZZ_N(G) - \ZZ_N(H)$ (notice $T$ is nonempty by minimality of the number of irreducible factors). We have $|T| \leq |S|/2$. Set $F^* = HP$. We have
\begin{equation*}
	\deg F^* \leq 200|T|^{1/2} + \deg H \leq 200(|S|/2)^{1/2} + |S|^{1/2} \leq  143|S|^{1/2}
\end{equation*}
as needed. 
\end{proof}

Now we prove Lemma \ref{lem:main_lemma}.

\begin{proof}[Proof of Lemma \ref{lem:main_lemma}]
Let $f: \Z_N^2 \to \C$ have $\supp f = S$. Let $R = \lfloor 200|S|^{1/2}\rfloor$. 
By Lemma \ref{lem:polynomials_through_sets} let, 
\begin{align*}
	F^* &= \sum_{0 \leq k,l < R} a_{k,l} z^kw^l \in \C[z,w] \\
	\ell &= \{(x,y) \in \Z_N^2\, :\, ax+by = c\},\quad \max(|a|, |b|) \leq R
\end{align*}
be such that
\begin{align*}
	A = S - \ZZ_N(F^*)
\end{align*}
is nonempty and lies on $\ell$.
Let $h: \Z_N^2\to \C$ be defined by 
\begin{align*}
h(x,y) = \frac{1}{N}F(e^{\frac{2\pi i}{N} x}, e^{\frac{2\pi i}{N} y}),\quad 
	\hat h(k,l) = \begin{cases}
		a_{k,l} & 0 \leq k,l < R, \\ 
		0 & \text{else}.
	\end{cases}
\end{align*}
Thus $hf$ is nonzero and supported in $\ell$. Also, 
\begin{equation*}
	\supp \widehat{hf} = \supp \hat h * \hat f \subset \supp (1_{[0,R)^2} * 1_{\supp \hat f}) = \N_{R}(\supp \hat f).
\end{equation*}
Setting $g := hf$ we are done.
\end{proof}

\begin{remark}
In order to obtain Theorem \ref{thm:2D_FUP}, it would suffice to replace Theorem \ref{thm:beukers_smyth} with a quantitatively weaker version which says that for $F(z,w)$ a degree $D$ irreducible polynomial not cutting out a line, 
\begin{align*}
	\#\{(\zeta_1, \zeta_2)\, :\, F(\zeta_1, \zeta_2) = 0\} \lesssim_{\varepsilon} D^{2+\varepsilon},\quad \zeta_1, \zeta_2 \text{ cyclotomic}
\end{align*}
for all $\varepsilon > 0$. 
\end{remark}

\section{Loose ends}\label{sec:loose_ends}

\subsection{Submultiplicativity}\label{sec:submul_proof}
We prove the submultiplicativity estimate (\ref{eq:submul}) in two dimensions. 
\begin{proof}
We first recall how Dyatlov proves submultiplicativity for discrete 1D cantor sets in \cite{DyatlovFUPSurvey}*{Lemma 4.6}. The Fourier transform $\mc F: \Z_{N_1N_2} \to \Z_{N_1N_2}$ can be realized as follows. We realize $L^2(\Z_{N_1N_2})$ as $L^2(\mathrm{Mat}_{N_1 \times N_2})$. In this basis, 
\begin{align*}
	\mc F =  \mc F_{col} D \mc F_{row}
\end{align*}
where 
\begin{alignat*}{2}
	(\mc F_{row}U)_{pb} &= \frac{1}{\sqrt{N_2}} \sum_{q=0}^{N_2} e^{-\frac{2\pi i}{N_2} bq} U_{pq}\quad &&\text{ applies the Fourier transform to each row}, \\ 
	(\mc F_{col}U)_{pb} &= \frac{1}{\sqrt{N_1}} \sum_{q=0}^{N_1} e^{-\frac{2\pi i}{N_1} pq} U_{qb}\quad &&\text{ applies the Fourier transform to each column}, \\ 
	(DU)_{pb} &= e^{-\frac{2\pi i}{N} pb} U_{pb}\quad &&\text{ applies a phase shift to each entry}.
\end{alignat*} 
Abstractly, $L^2(\Z_{N_1N_2}) = L^2(\Z_{N_1}) \otimes L^2(\Z_{N_2})$. But 
\begin{equation*}
	\mc F_{\Z_{N_1}} \otimes \mc F_{\Z_{N_2}} = \mc F_{\Z_{N_1} \times \Z_{N_2}} \neq \mc F_{\Z_{N_1N_2}}.
\end{equation*}
The phase shift operator $D$ corrects this issue. We can write 
\begin{align}
	\mc F_{row} = \mathrm{Id} \otimes \mc F_{\Z_{N_2}},\quad \mc F_{col} = \mc F_{\Z_{N_1}} \otimes \mathrm{Id} \nonumber\\ 
	\mc F = (\mc F_{\Z_{N_1}} \otimes \mathrm{Id})\circ D \circ (\mathrm{Id} \otimes \mc F_{\Z_{N_2}}) \label{eq:fourier_comp_eq}. 
\end{align}
In the notation of tensor products, if $N_1 = M^{k}$, $N_2 = M^r$, then 
\begin{align*}
	1_{\mc X_{k+r}} = 1_{\mc X_k} \otimes 1_{\mc X_r},\quad 1_{\mc Y_{k+r}} = 1_{\mc Y_k} \otimes 1_{\mc Y_r}. 
\end{align*}
Because $1_{\mc X_{k+r}}$, $1_{\mc Y_{k+r}}$ commute with $D$, 
\begin{align*}
	1_{\mc Y_{k+r}} \mc F_{k+r} 1_{\mc X_{k+r}} &= (1_{\mc Y_k} \otimes 1_{\mc Y_r})\circ  (\mc F_{\Z_{N_1}} \otimes \mathrm{Id})\circ D \circ (\mathrm{Id} \otimes \mc F_{\Z_{N_2}})\circ (1_{\mc X_k} \otimes 1_{\mc X_r}) \\ 
	&= (1_{\mc Y_k} \mc F_{\Z_{N_1}} \otimes 1_{\mc Y_r})\circ D \circ (1_{\mc X_k} \otimes \mc F_{\Z_{N_2}}1_{\mc X_r}) \\ 
	&= (1_{\mc Y_k} \mc F_{\Z_{N_1}} 1_{\mc X_k} \otimes 1_{\mc Y_r})\circ D \circ (1_{\mc X_k} \otimes 1_{\mc Y_r} \mc F_{\Z_{N_2}}1_{\mc X_r}).
\end{align*}
It follows from the above that 
\begin{align*}
	\|1_{\mc Y_{k+r}} \mc F_{k+r} 1_{\mc X_{k+r}}\|_{2\to 2} \leq \|1_{\mc Y_k} \mc F_{\Z_{N_1}} 1_{\mc X_k}\|_{2\to 2} \|1_{\mc Y_r} \mc F_{\Z_{N_2}} 1_{\mc X_r}\|_{2\to 2} 
\end{align*}
as desired. Written in this way, it is easy to see that the submultiplicativity estimate extends to two dimensions. We have the equation 
\begin{align*}
	\mc F_{\Z_{N_1N_2}^2} = (\mc F_{\Z_{N_1}^2} \otimes \mathrm{Id}) \circ D \circ (\mathrm{Id} \otimes \mc F_{\Z_{N_2}^2})
\end{align*}
where $D$ is a multiplication operator (indeed, this can be seen from writing $\Z_{N_1N_2}^2$ as a product of two copies of $\Z_{N_1N_2}$ and tensoring Equation (\ref{eq:fourier_comp_eq}) with itself) and the rest of the proof goes through verbatim. 
\end{proof}

\subsection{Theorem \ref{thm:2D_FUP} is sharp}\label{sec:thm_sharp}
Suppose $\mc A, \mc B$ are alphabets generating fractal sets $\mb X, \mb Y \subset \T^2$ with 
\begin{align*}
	&\R\mb v + \mb p \subset \mb X\text{ and }\R \mb v^{\perp} + \mb q \subset \mb Y,\\
	&\mb v = (a,b),\ \mb v^{\perp} = (-b, a),\quad \text{$a$ and $b$ coprime integers}.
\end{align*}
 We show $\mc A, \mc B$ do not obey a FUP. This amounts to showing that for infinitely many $k$ (in fact, for all $k$), there exists $f: \Z_N^2\to \C$ with 
\begin{align*}
	\supp f \subset \mc X_k,\quad \supp \hat f \subset \mc Y_k
\end{align*}
where $\mc X_k$, $\mc Y_k$ are defined in Equation (\ref{eq:X_k_cantor_defn}). 

\par{\textbf{Case 1:} $a = 0$ or $b = 0$.}
Assume $(a, b) = (0, 1)$. Then $\mb X$ contains a vertical line and $\mb Y$ contains a horizontal line. It follows that $\mc A$ contains some vertical line $\{x= x_0\}$, $x_0 \in \Z_M$, and $\mc B$ contains a horizontal line $\{y = y_0\}$. Let 
\begin{align*}
	x^{(k)} &= x_0 + Mx_0 + \dots + M^{k-1} x_0,\quad \{(x^{(k)}, y)\, :\, y \in \Z_N\} \subset \mc X_k\\
	y^{(k)} &= y_0 + My_0 + \dots + M^{k-1} y_0,\quad \{(x, y^{(k)})\, :\, y \in \Z_N\} \subset \mc Y_k.
\end{align*}
We have 
\begin{align*}
	\mc F N^{-1/2}1_{x = 0}(\xi, \eta) = N^{-3/2}\sum_{y \in \Z_N} e^{-\frac{2\pi i}{N} y\eta} = N^{-1/2}1_{\eta = 0}
\end{align*}
so 
\begin{align*}
	f = N^{-1/2}e^{2\pi i y^{(k)}} 1_{x = x^{(k)}}
\end{align*}
satisfies 
\begin{align*}
	\supp f = \{x = x^{(k)}\} \subset \mc X_k,\quad \supp \hat f = \{y = y^{(k)}\} \subset \mc Y_k
\end{align*}
as needed.

\par{\textbf{Case 2:} $a,b \neq 0$. }

In this case we claim 
\begin{equation}\label{eq:in_Xk_cond}
\begin{split}
	\mc X_k = \{(x,y) \in \Z_N^2\, :\, (x/M, (x+1)/M) \times (y/M, (y+1)/M) \cap \mb X \neq \emptyset\}, \\ 
	\mc Y_k = \{(x,y) \in \Z_N^2\, :\, (x/M, (x+1)/M) \times (y/M, (y+1)/M) \cap \mb Y \neq \emptyset\}.
\end{split}
\end{equation}
It is clear that if $(x/M, (x+1)/M) \times (y/M, (y+1)/M) \cap \mb X \neq \emptyset$ then $(x,y) \in \mc X_k$. For the other direction, we first note that $(0, 1)^2 \cap \mb X \neq \emptyset$---the only way for this to fail is if $\mc A$ lies on one of the horizontal or vertical lines $x = 0$, $x = M-1$, $y = 0$, $y = M-1$ in which case we are back in case 1. Now if $(x,y) \in \mc X_k$, then $(x, x+1) \times (y, y+1) \cap \mb X \neq \emptyset$ by self similarity of $\mb X$. 

Now, assume without loss of generality that $a,b$ are coprime. We will show that for all $k$, there exists $\mb p^{(k)}, \mb q^{(k)} \in \Z_{M^k}^2$ so that 
\begin{equation}\label{eq:lines_contained}
	\Z\mb v + \mb p^{(k)} \subset \mc X_k \text{ and } \Z \mb v^{\perp} + \mb q^{(k)} \subset \mc Y_k.
\end{equation}
We show it just for $\mc X_k$. By \ref{eq:in_Xk_cond}, we would like to choose $\mb p^{(k)} = (p^{(k)}_1, p^{(k)}_2)$ so that for all $t \in \Z$, 
\begin{align*}
	p^{(k)}_1 + ta &< M^kp_1 + (t+\varepsilon)a < p^{(k)}_1 + ta+1, \\ 
	p^{(k)}_2 + tb &< M^kp_2 + (t+\varepsilon)b < p^{(k)}_2 + tb+1,
\end{align*}
for some small $\varepsilon$. Rearranging, this amounts to
\begin{equation}\label{eq:line_offset_eqs}
\begin{split}
	0 &< (M^kp_1 - p^{(k)}_1) + \varepsilon a < 1, \\ 
	0 &< (M^kp_2 - p^{(k)}_2) + \varepsilon b < 1.
\end{split} 
\end{equation}
To make this true, we select $p^{(k)}_1, p^{(k)}_2$ to be integers so that
\begin{align*}
	M^kp_1 - p^{(k)}_1 &\in \begin{cases}
		[0,1) & \text{ if } a > 0 \\
		(0, 1] & \text{ if } a < 0
	\end{cases},\\
	M^kp_2 - p^{(k)}_2 &\in \begin{cases}
		[0,1) & \text{ if } b > 0 \\
		(0, 1] & \text{ if } b < 0
	\end{cases}.
\end{align*}
In each of these cases (\ref{eq:line_offset_eqs}) will hold, which yields (\ref{eq:lines_contained}). Now, we have 
\begin{align*}
	\mc F 1_{\Z\mb v}(\mb \xi) = \frac{1}{\sqrt{N}}\sum_{t \in \Z_N} e^{-\frac{2\pi i }{N} t\mb v \cdot \mb \xi} = 1_{\mb v \cdot \mb \xi= 0} = 1_{\Z\mb v^{\perp}}
\end{align*}
by Lemma \ref{lem:lines_gen_one_elem}. Thus with $T_{\mb a} f = f(\cdot - \mb a)$ the translation operator, we see that 
\begin{align*}
	f = \mc F^{-1} T_{\mb q^{(k)}} \mc F T_{\mb p^{(k)}} 1_{\Z\mb v}
\end{align*}
satisfies 
\begin{align*}
	\supp f \subset \Z \mb v + \mb p^{(k)} \subset \mc X_k,\quad \supp \hat f \subset \Z \mb v^{\perp} + \mb q^{(k)} \subset \mc Y_k
\end{align*}
contradicting a fractal uncertainty principle. 

\subsection{Proof of Proposition \ref{prop:lines_in_cantor_set}}\label{sec:pf_lines_in_cantor_prop}
Let $\mc A \subset \Z_M^2$ be an alphabet and $\mb X \subset \T^2$ the Cantor set it generates. Let $\mb A \subset \T^2$ be the drawing of $\mc A$.

First we show that a line $\R \mb v + \mb p$ lies on $\mb X$ if and only if $\R \mb v + M^k\mb p$ lies on $\mb A$ for all $k \geq 0$. Recall that
\begin{align*}
	\mb x \in \mb X \text{ if and only if } M^k \mb x \in \mb A \text{ for all } k \geq 0. 	
\end{align*} 
 Now, suppose $\R \mb v + \mb p \subset \mb X$. Then
\begin{align*}
	t\mb v + \mb p \in \mb X \Rightarrow (M^k t)\mb v + M^k\mb p \in \mb X
\end{align*}
so rescaling, $\R \mb v + M^k \mb p \subset \mb X \subset \mb A$. In the reverse direction, suppose $\R \mb v + M^k\mb p \subset \mb A$ for all $k$. Then
\begin{equation*}
	M^k(t\mb v + \mb p) \in \mb A \text{ for all } k \Rightarrow t \mb v + \mb p \in \mb X
\end{equation*}
as needed. Also, by Lemma \ref{lem:denseness_fact}, if $\mb v = (a, b)$ and $\max(|a|, |b|) > M$ then $\R \mb v + \mb p \not\subset \mb A$ for any $\mb p$.

\subsubsection*{More discussion on a procedure for checking lines}
Suppose $\ell = \R \mb v + \mb p$. If $\mb v$ is a multiple of $(1, 0)$ then $\ell$ is a horizontal line, and $\mb X$ can only contain a horizontal line if $\mc A$ does. 

Otherwise, let $\mb v = (a,b)$ with $a, b$ coprime integers, $b \neq 0$. Assume $a, b$ are fixed and $\max(|a|, |b|) \leq M$. 
There is some $\mb p' = (p_0, 0) \in \ell$, so $\ell = \R \mb v + (p_0, 0)$. 
We will turn the question around and consider the closed set
\begin{align*}
	\mb S_{\mb v} = \{s \in \T\, :\, \R \mb v + (s, 0) \subset \mb A\}. 
\end{align*}
The only possible boundary points are those for which $\R \mb v + (s, 0)$ intersects a point of the form $(j/M, k / M) \in \T^2$. If $t\mb v + (s, 0) = (j/M, k/M)$ then we can compute $s$ as
\begin{align*}
	s = \frac{jb-ka+ Mr}{Mb} \text{ for some } r \in \Z,\text{ so } s = \frac{c}{Mb} \text{ for some $c \in \Z$}.
\end{align*}
Now we can write $\mb S_{\mb v}$ as a union of intervals,
\begin{align*}
	S_{\mb v} &= \left\{s_j \in \{0,\ldots, Mb - 1\}\, :\, \Bigl[\frac{s_j}{Mb}, \frac{s_j+1}{Mb}\Bigr] \subset \mb S_{\mb v}\right\}, \\ 
	\mb S_{\mb v} &= \bigcup_{s_j\in S_{\mb v}} \Bigl[\frac{s_j}{Mb}, \frac{s_j+1}{Mb}\Bigr].
\end{align*}
Given the alphabet $\mb A$ and $\mb v = (a, b)$, one can efficiently compute the finite set $S_{\mb v} \subset \Z_{Mb}$. It is then a combinatorial question whether or not there exists $x \in \T$ so that $M^k x \in \mb S_{\mb v}$ for all $k$. It would be interesting to find an algorithm to answer this question.

\subsection{FUP implies spectral gap for bakers maps}\label{sec:proof_application_bakers}
We would like to show that the results in \cite{DyatlovJinBakersMaps}*{\S2} hold for two dimensional bakers maps, in particular Proposition \ref{prop:dyatlov_jin_FUP_specgap} (\cite{DyatlovJinBakersMaps}*{Proposition 2.6}). We prove here that \cite{DyatlovJinBakersMaps}*{Proposition 2.3} holds in 2D. The deduction of Proposition 2.4 from Proposition 2.3 is the same in 2D, and the proofs of Proposition 2.5 and 2.6 go through verbatim. 

In what follows we use the $\ell^{\infty}$ distance on $\T^2$ as in (\ref{eq:linf_dist_T2}). Let $\Phi$ be the expanding map
\begin{align*}
	&\Phi = \Phi_{M, \mc A} : \bigsqcup_{\mb a \in \mc A} \Bigl(\frac{a_1}{M}, \frac{a_1+1}{M}\Bigr)\times\Bigl(\frac{a_2}{M}, \frac{a_2+1}{M}\Bigr) \to (0,1)^2,\\
	&\Phi(x,y) = (Mx - a_1, My - a_2),\quad (x, y) \in \Bigl(\frac{a_1}{M}, \frac{a_1+1}{M}\Bigr) \times \Bigl(\frac{a_2}{M}, \frac{a_2+1}{M}\Bigr).
\end{align*}
For each $\varphi: \T^2 \to \R$ define
\begin{align*}
	\varphi_N \in \ell^2(\Z_N^2),\quad \varphi_N(\mb j) = \varphi(\mb j / N). 
\end{align*}
The function $\varphi_N$ defines a multiplication operator as well as a Fourier multiplier $\varphi_N^{\mc F} = \mc F_N^* \varphi_N \mc F_N$. 
\begin{proposition}[Propagation of singularities]\label{prop:propagation_singularities}
Assume that $\varphi, \psi: \T^2 \to [0,1]$ and for some $c > 0$, $0 \leq \rho < 1$, 
\begin{equation}
	d(\Phi(\supp \psi \cap \Phi^{-1}(\supp \chi)), \supp \varphi) \geq cN^{-\rho}. 
\end{equation}
Then
\begin{align}
	\|\varphi_N B_N \psi_N\|_{2 \to 2} = \mc O(N^{-\infty}),\label{eq:prop_sing_mult} \\ 
	\|\varphi_N^{\mc F} B_N \psi_N^{\mc F}\|_{2 \to 2} = \mc O(N^{-\infty}), \label{eq:prop_sing_fourier_mult}
\end{align}
where $\mc O(N^{-\infty})$ means decay faster than any polynomial, with constants depending only on $c, \rho, \chi$. In particular, these hold when 
\begin{equation}
	d(\supp \psi, \Phi^{-1}(\supp \varphi)) \geq cN^{-\rho}.
\end{equation}
\end{proposition}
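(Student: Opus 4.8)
The plan is to mimic the standard propagation-of-singularities estimate from Dyatlov--Jin \cite{DyatlovJinBakersMaps}*{Proposition 2.3}, tracking how each ingredient changes when $\Z_N$ becomes $\Z_N^2$ and $(0,1)$ becomes $(0,1)^2$. First I would expand $B_N = \sum_{\mb a \in \mc A} B_N^{\mb a}$ with $B_N^{\mb a} = \mc F_N^* \Pi_{\mb a}^* \chi_{N/M} \mc F_{N/M} \chi_{N/M} \Pi_{\mb a}$, so that it suffices to bound each $\|\varphi_N B_N^{\mb a} \psi_N\|_{2\to 2}$ separately (there are $|\mc A| = O(1)$ terms). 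Writing out $\varphi_N B_N^{\mb a} \psi_N$ as an integral kernel in $\Z_N^2$, the $\mc F_N^*$ and $\mc F_{N/M}$ factors produce oscillatory phases $e^{\frac{2\pi i}{N}(\cdots)}$ whose stationary-phase analysis is governed by the location of $\supp \varphi$ relative to the image of $\supp \psi$ under the branch of $\Phi$ determined by $\mb a$; the cutoffs $\chi_{N/M}\circ\Pi_{\mb a}$ restrict to the cell indexed by $\mb a$ and compose with $\Phi^{-1}(\supp\chi)$. The hypothesis $d(\Phi(\supp\psi \cap \Phi^{-1}(\supp\chi)), \supp\varphi) \geq cN^{-\rho}$ with $\rho < 1$ means the relevant phase has no stationary point on the support of the symbol, with a lower bound on the gradient of order $N^{1-\rho} \gg 1$; repeated non-stationary-phase (summation by parts, i.e. the discrete analogue of integrating by parts) then yields decay faster than any power of $N$, which is exactly $\mc O(N^{-\infty})$.

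The key steps, in order, are: (1) reduce to a single $\mb a$; (2) compute the Schwartz kernel $K_{\mb a}(\mb j, \mb j')$ of $\varphi_N B_N^{\mb a}\psi_N$ explicitly as a double sum over $\Z_{N}^2 \times \Z_{N/M}^2$ with an explicit phase function $\Psi(\mb j, \mb j', \mb m, \mb n)$; (3) observe that on the support of the amplitude (which forces $\mb j/N \in \supp\varphi$, $\mb j'/N \in \supp\psi$, and the intermediate variable in $\Phi^{-1}(\supp\chi)$ up to $O(1/N)$ fuzz), the separation hypothesis forces $|\nabla_{\text{phase var}} \Psi| \gtrsim c N^{-\rho}$ measured in the appropriate rescaled coordinates, equivalently $\gtrsim cN^{1-\rho}$ before rescaling; (4) apply the non-stationary phase lemma from \cite{DyatlovJinBakersMaps} (their Lemma on discrete oscillatory sums, which applies verbatim in any dimension) to get $|K_{\mb a}(\mb j,\mb j')| = \mc O(N^{-\infty})$ uniformly; (5) conclude the operator norm bound via Schur's test, since the kernel is supported on a set of size $O(N^4)$ and each entry is $\mc O(N^{-\infty})$. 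Step (6): the Fourier-multiplier statement \eqref{eq:prop_sing_fourier_mult} follows by conjugating, using that $B_N$ intertwines position and momentum cutoffs through the baker's-map structure in the same way as in 1D — this is the same bookkeeping as in \cite{DyatlovJinBakersMaps} and goes through unchanged. The final sentence (the "in particular") is immediate: $d(\supp\psi, \Phi^{-1}(\supp\varphi)) \geq cN^{-\rho}$ implies the hypothesis because $\Phi$ is expanding (Lipschitz constant $M$ on each branch for $\Phi$, hence $\Phi^{-1}$ is a contraction), so a separation downstairs transfers to a separation upstairs after applying $\Phi$, at worst losing the harmless factor $M$.

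The main obstacle I expect is bookkeeping rather than a new idea: correctly setting up the two-dimensional phase function and verifying that the separation hypothesis, stated in terms of $\Phi$ and the $\ell^\infty$ distance on $\T^2$, translates into a genuine lower bound on the gradient of the discrete phase uniformly over the amplitude's support — in particular handling the $O(1/N)$ discretization errors (the difference between $\mb j/N$ lying in a support versus within $1/N$ of it) and checking they are swallowed by $cN^{-\rho}$ once $N$ is large, which uses $\rho < 1$. The 1D argument of Dyatlov--Jin does all of this carefully; the 2D version is the same computation with vectors in place of scalars and $M\times$ expansion in each coordinate, so I would present it as "the proof of \cite{DyatlovJinBakersMaps}*{Proposition 2.3} applies mutatis mutandis, replacing $\Z_N$ by $\Z_N^2$ and scalar phases by inner products $\mb m \cdot \mb n$," and spell out only the kernel computation and the gradient lower bound, which are the two places where the dimension genuinely enters.
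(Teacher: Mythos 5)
Your proposal follows essentially the same route as the paper: expand $B_N=\sum_{\mb a}B_N^{\mb a}$, write the kernel of $\varphi_N B_N^{\mb a}\psi_N$ explicitly, use the support conditions on $\varphi,\psi,\chi$ together with the separation hypothesis to show the oscillatory sum $\tilde A_{\mb j\mb k}$ has frequency bounded below by $cN^{-\rho}$, apply the discrete nonstationary-phase lemma of Dyatlov--Jin, and obtain \eqref{eq:prop_sing_fourier_mult} by conjugating with $\mc F_N$. The paper's proof is exactly this "mutatis mutandis" adaptation of \cite{DyatlovJinBakersMaps}*{Proposition 2.3}, so your plan is correct and matches it.
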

The proof is almost identical to that in \cite{DyatlovJinBakersMaps}.
\begin{proof}
We have 
\begin{align*}
	\varphi_N B_N \psi_N u(\mb j) &= \sum_{\mb a \in \mc A} \sum_{\stackrel{0 \leq k_1, k_2 \leq N/M - 1}{\mb k = (k_1, k_2)}} A^{\mb a}_{\mb j\mb k} u\Bigl(\mb k + \mb a\frac{N}{M}\Bigr),  \\
	A^{\mb a}_{\mb j \mb k} &= \frac{M}{N^2} \varphi\Bigl(\frac{\mb j}{N}\Bigr) \exp\Bigl(\frac{2\pi i}{M} \mb a \cdot \mb j\Bigr) \chi\Bigl(\mb k \frac{M}{N}\Bigr) \psi\Bigl(\frac{\mb k}{N} + \frac{\mb a}{M}\Bigr) \tilde A_{\mb j \mb k}, \\ 
	\tilde A_{\mb j \mb k} &= \sum_{\stackrel{0 \leq m_1, m_2 \leq N/M - 1}{\mb m = (m_1, m_2)}} \exp\Bigl(\frac{2\pi i}{N} \mb m \cdot (\mb j - \mb k M)\Bigr) \chi\Bigl(\mb m \frac{M}{N}\Bigr).
\end{align*}
We can write 
\begin{align*}
	\tilde A_{\mb j \mb k} = \sum_{\mb m \in \Z_N^2} \exp\Bigl(\frac{2\pi i}{N} \mb b\cdot \mb m\Bigr)\chi_1\Bigl(\frac{\mb m}{N}\Bigr),\quad \mb b L= \mb j - \mb k M,\quad \chi_1(\mb x) = \chi(M\mb x). 
\end{align*}
We have $A^{\mb a}_{\mb j\mb k} = 0$ unless 
\begin{equation*}
	\frac{\mb j}{N} \in \supp \varphi,\quad \frac{\mb k}{N} + \frac{\mb a}{M} \in \supp \psi,\quad \mb k \frac{M}{N} = \Phi(\frac{\mb k}{N} + \frac{\mb a}{M}) \in \supp \chi. 
\end{equation*}
It follows that $d\Bigl(\frac{\mb b}{N}, 0\Bigr) \geq cN^{-\rho}$, so by the method of nonstationary phase \cite{DyatlovJinBakersMaps}*{Lemma 2.2}, we see $\max_{\mb a, \mb j, \mb k} |A^{\mb a}_{\mb j \mb k}| = \mc O(N^{-\infty})$ and (\ref{eq:prop_sing_mult}) follows. Equation (\ref{eq:prop_sing_fourier_mult}) is a consequence, as 
\begin{align*}
	\psi_N^{\mc F} B_N \varphi_N^{\mc F} = \mc F_N^*\overline{(\varphi_N B_N \psi_N)}^* \mc F_N
\end{align*}
and the Fourier transform is unitary. 
\end{proof}

\section{Acknowledgements}
Thanks to Larry Guth and Peter Sarnak for helpful discussions. Many thanks to Semyon Dyatlov for detailed and helpful comments, and for useful conversations along the way. 

\appendix
\section{Sketch of the proof of Theorem \ref{thm:beukers_smyth}}\label{sec:beukers-smyth-argument}

In this section we will try to illustrate the main ideas of Beukers \& Smyth's proof of Theorem \ref{thm:beukers_smyth} as directly as possible. In what follows the degree of a polynomial is
\begin{align*}
	\deg F = \max_{a_{kl} \neq 0} (k+l),\quad F(z,w) = \sum_{k,l} a_{kl} z^kw^l \in \C[z,w]
\end{align*}
which is different from \S\ref{sec:main_lemma}. We will use the notation $Z_N(F) \subset \Z_N^2$ as in (\ref{eq:Z_N_eq}).

\subsection{Bezout's inequality}

We first state Bezout's theorem. 
\begin{theorem}\label{thm:bezout_theorem}
Let $F, G \in \C[z,w]$ be coprime irreducible polynomials with degrees $D, E$ which are not multiples of each other, 
\begin{align*}
	F = \sum_{0 \leq k+l \leq D} a_{kl} z^kw^l,\quad G = \sum_{0 \leq k+l \leq E} b_{kl} z^kw^l. 
\end{align*}
Then 
\begin{align*}
	|\{(z,w) \in \C^2\, :\, F(z,w) = G(z,w) = 0\}| \leq DE.
\end{align*}
If intersections are taken in $\C \mathbb{P}^2$ and counted with multiplicity, then this is an equality. 
\end{theorem}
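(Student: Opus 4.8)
The plan is to prove the inequality by the classical resultant argument and to recover the equality in $\C\mathbb{P}^2$ (counted with multiplicity) by homogenizing and invoking the Hilbert-polynomial form of projective Bezout. Since $F$ and $G$ are irreducible and not scalar multiples of one another, they have no common factor in $\C[z,w]$; consequently the common zero set $V(F,G) \subset \C^2$ is finite, because any positive-dimensional component would be cut out by an irreducible polynomial dividing both $F$ and $G$. Write $V(F,G) = \{p_1, \dots, p_m\}$.

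First I would apply a generic linear change of coordinates of the form $z \mapsto z + \mu w$ so that simultaneously (i) the coefficient of $w^D$ in the transformed $F$ and of $w^E$ in the transformed $G$ are nonzero constants --- this coefficient is the value of the top-degree form of $F$ (resp. $G$) at $(\mu,1)$, a nonzero polynomial in $\mu$, so only finitely many $\mu$ are excluded --- and (ii) the points $p_i$ acquire pairwise distinct first coordinates, which again rules out only finitely many $\mu$ since the $p_i$ are finite in number. After this normalization, form the resultant $R(z) = \mathrm{Res}_w\bigl(F(z,\cdot), G(z,\cdot)\bigr) \in \C[z]$ via the $(D+E)\times(D+E)$ Sylvester matrix. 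Coprimality of $F,G$ gives $R \not\equiv 0$, and a standard weighted-degree count on the Sylvester determinant (treating $F$ as having total degree $D$ and $G$ total degree $E$) gives $\deg R \le DE$. Because the leading coefficients in $w$ are now nonzero constants, $z_0$ is a root of $R$ exactly when $F(z_0,\cdot)$ and $G(z_0,\cdot)$ share a root in $w$, i.e. exactly when some $p_i$ lies above $z_0$; by (ii) at most one $p_i$ lies above each such $z_0$. Hence $m \le \#\{\text{roots of } R\} \le \deg R \le DE$.

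For the sharp statement in $\C\mathbb{P}^2$, I would homogenize $F$ and $G$ to forms $\tilde F, \tilde G \in \C[x_0,x_1,x_2]$ of degrees $D$ and $E$. Coprimality (after checking $\tilde F, \tilde G$ still have no common component) makes $\tilde F, \tilde G$ a regular sequence, so the Koszul complex $0 \to S(-D-E) \to S(-D)\oplus S(-E) \to S \to S/(\tilde F, \tilde G) \to 0$ is exact; computing Hilbert polynomials term by term shows that the Hilbert polynomial of $S/(\tilde F, \tilde G)$ is the constant $DE$, which equals $\sum_{p} i_p(\tilde F, \tilde G)$, the total intersection number counted with multiplicities. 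I expect the main obstacle to be the genericity bookkeeping in the inequality --- producing a single $\mu$ that simultaneously fixes the leading coefficients, separates the finitely many intersection points, and keeps the degree bound on $R$ intact --- together with the commutative-algebra inputs (finiteness of $V(F,G)$, and the regular-sequence property underlying the Hilbert-polynomial computation). These are all standard, but the Hilbert-polynomial step is the heaviest ingredient and is what forces one to leave the purely elementary setting.
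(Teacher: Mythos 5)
The paper does not prove Theorem \ref{thm:bezout_theorem} at all: it is stated in the appendix as classical background and used as a black box (only the inequality $|\V(F)\cap\V(G)|\le DE$ is ever invoked, in the proof sketch of Lemma \ref{lem:seven_polys} and in the discussion after Lemma \ref{lem:polynomials_through_sets}). So there is no in-paper argument to compare against; what you have written is a self-contained proof of the cited classical fact.

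Your argument is correct and standard. For the affine inequality: irreducibility plus non-proportionality gives coprimality, hence finiteness of $V(F,G)$; a single generic shear $z\mapsto z+\mu w$ does simultaneously arrange that the top-degree forms of $F$ and $G$ are nonzero at $(\mu,1)$ and that the finitely many intersection points have distinct first coordinates, since each condition excludes only finitely many $\mu$; and the weighted-degree count on the Sylvester determinant (writing $F=\sum_j f_j(z)w^j$ with $\deg f_j\le D-j$, and likewise for $G$) gives $\deg_z \mathrm{Res}_w(F,G)\le DE$ exactly because the shear preserves total degree while making the $w$-degree equal to the total degree. For the projective equality, the one point you rightly flag --- that $\tilde F,\tilde G$ remain coprime --- follows because the homogenization of an irreducible polynomial is irreducible (no factor of $x_0$ can appear), so $\tilde F,\tilde G$ form a regular sequence and the Koszul resolution gives Hilbert polynomial $\binom{t+2}{2}-\binom{t-D+2}{2}-\binom{t-E+2}{2}+\binom{t-D-E+2}{2}=DE$; since the intersection scheme is zero-dimensional this constant is the length of $S/(\tilde F,\tilde G)$ in large degrees, i.e.\ the sum of the local intersection multiplicities. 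This is more than the paper needs, but it is a complete and correct proof of the statement as given.
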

We denote by $\V(F)$, $\V(G) \subset \C^2$ the zero sets of $F$ and $G$. Then Bezout's inequality can be written 
\begin{equation}\label{eq:bezout_ineq_eq}
	|\V(F) \cap \V(G)| \leq DE.
\end{equation}

\subsection{Setup for Theorem \ref{thm:beukers_smyth}}

To prove Theorem \ref{thm:beukers_smyth}, it is more convenient to work with Laurent polynomials $F \in \C[z,w,z^{-1},w^{-1}]$. Like polynomials in two variables, Laurent polynomials in two variables also enjoy unique factorization up to units and satisfy a version of Bezout's inequality. From this perspective, the factors $z^a - \zeta w^b$ can be written as $z^aw^{-b} - \zeta$, so we can just look for factors of the form $z^aw^b - \zeta$. 

Beukers \& Smyth make the following reduction. For $F = \sum_{kl} a_{kl}z^kw^l \in \C[z,w,z^{-1},w^{-1}]$ a Laurent polynomial, let $\mc L(F)$ be the sublattice of $\Z^2$ generated by 
\begin{align*}
	\{(k,l) - (k', l')\, :\, a_{kl}, a_{k',l'} \neq 0\}. 
\end{align*}
Notice that if $F = z^aw^b - \zeta$, then $\mc L(F) = \Z(a,b)$ has rank one. If $F = z^a - \zeta w^b$, then $\mc L(F) = \Z(a, -b)$. More generally, if $\mc L(F)$ has rank one then $F$ can be written as a function of $z^aw^b$ and one can reduce to the one variable case. If $\mc L(F)$ has rank two but is not all of $\Z^2$, one can change variables within the class of Laurent polynomials to reduce to the case where $\mc L(F) = \Z^2$. Rather than fully explain this, we will just prove Theorem \ref{thm:beukers_smyth} in the case where $F$ is a genuine polynomial and $\mc L(F) = \Z^2$. 

Here is part of Lemma 1 from \cite{BeukersSmyth}. 
\begin{lemma}\label{lem:root_of_unity_conj}
If $\zeta$ is a root of unity, then it is Galois conjugate to exactly one of $-\zeta, \zeta^2, -\zeta^2$. 
\end{lemma}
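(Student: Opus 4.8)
The plan is to translate this statement about Galois conjugacy into an elementary statement about multiplicative orders, and then run a short case analysis. The key classical fact I would use is that two roots of unity are Galois conjugate over $\Q$ precisely when they have the same multiplicative order: the conjugates of a primitive $n$-th root of unity $\zeta$ under $\Gal(\Q(\zeta)/\Q)\cong(\Z/n\Z)^{\times}$ are exactly the $\zeta^{j}$ with $\gcd(j,n)=1$, i.e.\ the other primitive $n$-th roots of unity. So, writing $n$ for the order of $\zeta$, it suffices to show that exactly one of $-\zeta$, $\zeta^{2}$, $-\zeta^{2}$ again has order $n$.

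First I would handle the case $n$ odd. Then $\gcd(2,n)=1$, so $\zeta^{2}$ has order $n$ and is a conjugate of $\zeta$; on the other hand $-1$ has order $2$, which is coprime to $n$, so $-\zeta$ and $-\zeta^{2}$ have order $\mathrm{lcm}(2,n)=2n\neq n$ and are not conjugates of $\zeta$. Hence exactly one of the three, as desired.

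Next I would handle $n$ even. Here $-1=\zeta^{n/2}$, so $-\zeta=\zeta^{1+n/2}$ and $-\zeta^{2}=\zeta^{2+n/2}$ are genuine powers of $\zeta$, and ``being a conjugate of $\zeta$'' becomes ``the exponent is coprime to $n$''. Since $n$ is even, the exponent $2$ of $\zeta^{2}$ is not coprime to $n$, so $\zeta^{2}$ is never a conjugate. The exponents $1+n/2$ and $2+n/2$ differ by $1$, so exactly one of them is odd; the even one is not coprime to $n$, while for the odd one, say $e$, any common divisor $d$ of $e$ and $n$ divides $2e-n\in\{2,4\}$ and is odd, hence $d=1$, so $\gcd(e,n)=1$. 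Thus exactly one of $-\zeta$, $-\zeta^{2}$ is a conjugate of $\zeta$, and $\zeta^{2}$ is not, giving again exactly one of the three. Combining the two cases proves the lemma.

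The argument uses nothing beyond the Galois theory of cyclotomic fields and divisibility bookkeeping, so I do not expect a genuine obstacle; the only point requiring a little care is verifying that the relevant exponent is coprime to $n$ in the even case, but the observation that a common divisor must divide $2e-n$ makes this immediate.
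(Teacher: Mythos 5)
Your proof is correct, and it follows the standard route: the paper itself does not prove this lemma but simply quotes it as part of Lemma~1 of Beukers--Smyth, whose argument is the same order-counting one you give (conjugacy over $\Q$ is equivalent to equality of multiplicative orders, by irreducibility of the cyclotomic polynomials). The only cosmetic difference is that the usual presentation splits the even case into $n\equiv 2\pmod 4$ (where $-\zeta^2$ is the conjugate) and $n\equiv 0\pmod 4$ (where $-\zeta$ is), whereas your observation that exactly one of the exponents $1+n/2$, $2+n/2$ is odd, together with the $d\mid 2e-n$ divisibility check, handles both subcases uniformly. All edge cases ($\zeta=\pm1$, where two of the three listed elements coincide) are covered by your argument since the orders alone decide conjugacy.
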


Now we partially prove a lemma covering the relevant portions of \cite{BeukersSmyth}*{\S3}. We follow them directly. 

\begin{lemma}\label{lem:seven_polys}
Let $F \in \C[z,w]$ be an irreducible polynomial with $\mc L(F) = \Z^2$. Then there are seven other polynomials $F_1, \ldots, F_7$ none of which have $F$ as a component, and such that if $(z,w)$ is a cyclotomic point ($z^N = w^N = 1$ for some $N$) with $F(z,w) = 0$, then $F_j(z,w) = 0$ for some $1 \leq j \leq 7$. We may take 
\begin{align*}
	\deg F_1 &= \deg F_2 = \deg F_3 = \deg F, \\
	\deg F_4 &= \deg F_5 = \deg F_6 = \deg F_7 = 2\deg F. 
\end{align*}
\end{lemma}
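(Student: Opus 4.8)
The plan is to exploit the classical fact that a root of unity $\zeta$ is Galois conjugate to one of $-\zeta$, $\zeta^2$, $-\zeta^2$ (Lemma \ref{lem:root_of_unity_conj}), and to combine this with the fact that $F$ is defined over $\C$ — here one first reduces to the case where $F$ has \emph{algebraic} coefficients, since the cyclotomic zero set only depends on finitely many coefficients and the whole question can be descended to a number field. So I would begin by assuming $F \in K[z,w]$ for a number field $K$, normalized (say, primitive with algebraic integer coefficients).

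The core construction is as follows. Suppose $(z,w)$ is cyclotomic with $F(z,w) = 0$; say $z,w$ lie in $\Q(\zeta_N)$. Let $\sigma$ be a Galois automorphism of $\overline{\Q}/K(\text{something})$; applying $\sigma$ to the identity $F(z,w)=0$ gives $F^{\sigma}(z^{\sigma}, w^{\sigma}) = 0$, where $F^{\sigma}$ is $F$ with coefficients acted on by $\sigma$. The key trick of Beukers–Smyth is to choose $\sigma$ to be (a lift of) the $q$-power Frobenius-type substitution $\zeta_N \mapsto \zeta_N^q$ for a suitable exponent $q \in \{-1, 2, -2\}$, so that $z^{\sigma} = z^q$ and $w^{\sigma} = w^q$ up to controlled roots of unity; Lemma \ref{lem:root_of_unity_conj} guarantees that for one of these three choices of $q$ one can instead take $\sigma$ to fix the coefficient field appropriately, i.e. $F^{\sigma} = F$. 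I would carry this out by: (i) enlarging $K$ if necessary so that it is Galois and contains all conjugates of the coefficients; (ii) for each of $q = -1, 2, -2$, considering the polynomial $G_q(z,w) := F(z^q, w^q)$ (cleared of denominators to a genuine polynomial of degree $\le 2\deg F$ — this is where the factor of $2$ and the degrees $2\deg F$ for $F_4,\dots,F_7$ come from), together with the three polynomials $F(\zeta z, w)$, $F(z,\zeta w)$ type modifications of degree exactly $\deg F$ that absorb the "up to a root of unity" ambiguity; (iii) checking, via Lemma \ref{lem:root_of_unity_conj} and the Galois action, that every cyclotomic zero of $F$ must be a zero of one of these seven polynomials. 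The bookkeeping of exactly which seven polynomials appear, and verifying each does not have $F$ as a component, is the substance of Beukers–Smyth §3; the non-divisibility of $F$ into $F_j$ uses the irreducibility of $F$ together with $\mathcal L(F) = \Z^2$ (if $F \mid F(z^q,w^q)$ with $F$ irreducible and $\mathcal L(F)=\Z^2$, a Newton-polygon / support argument forces $F$ to cut out a line, contradicting that we are in the generic case).

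I expect the main obstacle to be precisely step (iii): pinning down the correct seven polynomials and proving in each case that $F$ is not a factor. The "$F^\sigma = F$" step is delicate because Galois conjugation acts on the coefficients of $F$ and on the coordinates $z,w$ simultaneously, and one has to split the automorphism group into the part fixing $K$ and the part acting on $\zeta_N$; Lemma \ref{lem:root_of_unity_conj} is exactly what lets one trade a troublesome conjugate of $\zeta$ for $\zeta^{\pm1}, \zeta^{\pm 2}$, but arranging this uniformly over all possible cyclotomic zeros (with varying $N$) requires care. Once the seven polynomials are in hand with the stated degrees, the theorem follows immediately by Bézout: each cyclotomic zero of $F$ lies in $\V(F) \cap \V(F_j)$ for some $j$, and $\sum_j \deg F \cdot \deg F_j = \deg F(3\deg F + 4 \cdot 2\deg F) = 11 (\deg F)^2 \le 11 D^2$, and accounting for the fact that $\mathcal L(F)$ need not be $\Z^2$ in general (the reduction step at the start loses another factor of $2$) gives the stated $22D^2$.

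Thus in summary: first reduce to algebraic coefficients and to $\mathcal L(F) = \Z^2$; then for $q \in \{-1,2,-2\}$ build $F(z^q,w^q)$ and its root-of-unity twists; use Lemma \ref{lem:root_of_unity_conj} to show every cyclotomic zero of $F$ kills one of these seven polynomials; check none of them is divisible by $F$ using irreducibility and a support/Newton-polygon argument; and conclude via Bézout. The degree arithmetic $3 \cdot 1 + 4 \cdot 2 = 11$ per factor, doubled by the lattice reduction, yields $22D^2$.
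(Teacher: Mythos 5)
Your overall strategy is the same as the paper's (and Beukers--Smyth's): produce three auxiliary polynomials of degree $D$ and four of degree $2D$ from Lemma \ref{lem:root_of_unity_conj}, rule out $F$ as a factor using $\mc L(F)=\Z^2$, and finish with Bezout. But two of your steps, as written, do not work. First, the substitution set $q\in\{-1,2,-2\}$ is wrong: Lemma \ref{lem:root_of_unity_conj} says $\zeta$ is conjugate to one of $-\zeta,\ \zeta^2,\ -\zeta^2$, not to one of $\zeta^{-1},\zeta^2,\zeta^{-2}$. Indeed $\zeta\mapsto\zeta^{-1}$ is complex conjugation, which is \emph{always} an automorphism of $\Q(\zeta_N)$ and carries no information; moreover $F(z^{-1},w^{-1})$ cleared of denominators has degree up to $2D$, not $D$, so your degree bookkeeping $3\cdot D+4\cdot 2D=11D^2$ would not come out. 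The correct seven polynomials are the sign twists $F(-z,w),F(z,-w),F(-z,-w)$ (degree $D$) and $F(\pm z^2,\pm w^2)$ (degree $2D$): writing a cyclotomic zero as $(\zeta^a,\zeta^b)$ with $a,b$ coprime, the conjugate $\pm\zeta^{e}$ ($e\in\{1,2\}$) of $\zeta$ produces the point $((\pm 1)^a\zeta^{ea},(\pm 1)^b\zeta^{eb})$, and coprimality of $a,b$ means the signs are not both $+1$ trivially, landing you on one of the seven.

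Second, the assertion that ``one can take $\sigma$ to fix the coefficient field appropriately, i.e.\ $F^\sigma=F$'' is exactly the hard point and cannot be arranged uniformly: the argument that every conjugate of $\zeta$ is again a root of $f(\zeta):=F(\zeta^a,\zeta^b)$ requires $f$ to have rational (or at least suitably Galois-stable) coefficients. This forces a genuine case split on the field of definition of $F$ --- $F\in\Q[z,w]$; $F$ defined over a nontrivial abelian extension; $F$ not defined over $\Q^{\mathrm{ab}}$ at all --- and in the last case one does not use the seven polynomials but a single conjugate $F^\sigma$ with $\sigma\in\Gal(\C/\Q^{\mathrm{ab}})$ moving some coefficient. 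Your sketch collapses these cases into one and defers the verification to the reference. Finally, the non-divisibility check is not a Newton-polygon argument for $F_4,\dots,F_7$: the support/parity argument (via $\mc L(F)=\Z^2$) only shows $F\nmid F(\pm z,\pm w)$; for $F(\pm z^2,\pm w^2)$ one argues that if $F$ divided it then so would all four distinct irreducible sign twists of $F$, forcing degree $\ge 4D>2D$, a contradiction. These are the concrete pieces your outline is missing.
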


It follows directly from Bezout's inequality (\ref{eq:bezout_ineq_eq}) that 
\begin{align*}\label{eq:beukerssmythconclusion}
	\ZZ_N(F) &\subset \bigcup_{j=1}^7 \ZZ_N(F) \cap \ZZ_N(F_j) \quad \text{ for all } N,\\ 
	|\ZZ_N(F)| &\leq 3D^2 + 8D^2 = 11D^2\quad \text{ for all } N.
\end{align*}
\begin{remark}
In Theorem \ref{thm:beukers_smyth} we state the bound $22D^2$ rather than $11D^2$ because we allow terms of the form $z^Dw^D$ which has degree $2D$. The bound is $22D^2$ rather than $11(2D)^2 = 44D^2$ because the \textit{Newton polytope} of $F$ has volume $\leq D^2$, so \cite{BeukersSmyth}*{Theorem 4.1} gives the sharper bound of $22D^2$. 
\end{remark}

\subsection{Proof sketch of some special cases of Lemma \ref{lem:seven_polys}}

In the proof we split into cases depending on whether or not $F$ can be defined over an abelian extension of $\Q$. The hardest case is when $F$ is defined in some nontrivial abelian extension of $\Q$---there are a few subcases involved. We prove Lemma \ref{lem:seven_polys} in the two easier cases where $F$ has coefficients in $\Q$, and where $F$ is not defined over any abelian extension.

First, multiply $F$ be a constant so one of its coefficients are rational. 
\par{\textbf{Case 1:} $F \in \Q[z,w]$.}
We take 
\begin{align*}
	F_1 = F(-z, w),\quad F_2 = F(z, -w),\quad F_3 = F(-z, -w) \\ 
	F_4 = F(z^2, w^2),\quad F_5(-z^2, w^2),\quad F_6(z^2, -w^2),\quad F_7(-z^2,-w^2).
\end{align*}
We must show that if $F(z,w) = 0$ is a cyclotomic point, $F_j(z,w) = 0$ for some $j$. Let $\zeta$ be a root of unity and $z = \zeta^a$, $w = \zeta^b$, $a,b$ coprime. Then $f(\zeta) = F(\zeta^a, \zeta^b)$ is a polynomial in $\zeta$ with rational coefficients. Thus every conjugate of $\zeta$ is also a root of $f$. By Lemma \ref{lem:root_of_unity_conj} exactly one of $\{-\zeta, \zeta^2, -\zeta^2\}$ is conjugate to $\zeta$, so one of 
\begin{align*}
	(-\zeta^a, \zeta^b), (\zeta^a, -\zeta^b), (-\zeta^a, -\zeta^b), (\zeta^{2a}, \zeta^{2b}), (-\zeta^{2a}, \zeta^{2b}), (\zeta^{2a}, -\zeta^{2b}), (-\zeta^{2a}, -\zeta^{2b})
\end{align*}
is also a zero of $F$ as needed. It remains to show that $F$ is not a component of any $F_j$. Because they arise from a linear change of variables of $F$, $F_1,F_2,F_3$ are irreducible. 
If $F_1$ is a linear multiple of $F$, then all nonzero $a_{kl}$ must have the same parity for $k$. Thus $\mc L(F)$ would span a proper sublattice of $\Z^2$ contradicting our assumption. Similar arguments show that $F_2$ and $F_3$ are not linear multiples of $F$, and because they have the same degree, $F$ is not a component. If $F$ were a component of $F_4$ then $F(z^2,w^2) = F(z,w)G(z,w)$, so $F(z^2,w^2) = F_1(z,w) G(-z,w)$, and $F_1$ is a component of $F_4$ as well. An analagous argument shows $F_2, F_3$ are components as well. This would imply that $\deg F_4 \geq \deg F F_1F_2F_3 \geq 4D$ using the fact that $F_1, F_2, F_3, F_4$ are all distinct irreducibles. But $\deg F_4 = 2D$, a contradiction. A similar argument shows $F$ is not a factor of $F_5, F_6, F_7$. 

\par{\textbf{Case 2:} the coefficients of $F$ do not lie in any abelian extension of $\Q$}. 
This case is easier. Let $\sigma \in \Gal(\C / \Q^{ab})$ be an automorphism of $\C$ which fixes $\Q^{ab}$ and does not fix the coefficients of $F$. Here $\Q^{ab}$ is the maximal abelian extension of $\Q$, which is the composite of all the cyclotomic extensions $\Q[e^{2\pi i /N}]$. Let 
\begin{align*}
	F^{\sigma} = \sum_{kl} \sigma(a_{kl}) z^kw^l. 
\end{align*}
For $z,w$ a cyclotomic root of $F$, $\sigma(z) = z$ and $\sigma(w) = w$, so 
\begin{align*}
	F^{\sigma}(z,w) = \sigma(F(z,w)) = 0. 
\end{align*}
Thus the cyclotomic points of $F$ are contained in $V(F) \cap V(F^{\sigma})$. But $F^{\sigma}$ is not a multiple of $F$, because some coefficient of $F$ (the rational one) is fixed by $\sigma$ and another must be different. Thus $V(F) \cap V(F^{\sigma}) \leq D^2$. 

\section{Higher dimensions and continuous FUP}\label{sec:higher_dim_continuous}

\subsection{Results from a new method}

It seems difficult to use the ideas in the present paper to prove a discrete FUP in $d\geq 3$ dimensions. We would need a higher dimensional analogue of Theorem \ref{thm:beukers_smyth} with very strong bounds that are currently unavailable. 

However, after this work was completed the author \cite{Cohen2023} proved a fractal uncertainty principle for sets $\mb X \subset \R^d$ that avoid lines in a quantitative sense called \textit{line porosity}. The core of the latter paper involves constructing plurisubharmonic functions, and the methods are completely different from those used here---there is no arithmetic input. Using the new work we can prove the following higher dimensional result for discrete Cantor sets.

\begin{theorem}\label{thm:discrete_FUP_higher_dim}
Suppose $\mc A, \mc B \subsetneq \Z_M^d$ are alphabets with drawings $\mb X, \mb Y \subset \T^d$. If $\mb Y$ does not contain any lines, then $\mc X_k, \mc Y_k$ satisfy 
\begin{equation*}
	\| 1_{\mc Y_k} \mc F 1_{\mc X_k} \|_{2\to 2} \lesssim M^{-k\beta}
\end{equation*}
for some $\beta > 0$. 
\end{theorem}

The more recent work has a few advantages. We don't need self-similarity, the result applies in any dimension, and most importantly, we move from the model setting $\Z_N^d$ to the physically relevant domain $\R^d$. 

On the other hand, Theorem \ref{thm:2D_FUP} gives a precise condition involving pairs of orthogonal lines which is currently unavailable in the continuous setting: Theorem \ref{thm:discrete_FUP_higher_dim} needs one of the Cantor sets to avoid all lines. It is an interesting challenge to improve the main result of \cite{Cohen2023} so the condition involves pairs of orthogonal subspaces.

\subsection{Statement of higher dimensional continuous FUP}

For $\mb x \in \R^d$, let $\B_{R}(\mb x)$ be the radius $R$ ball about $\mb x$. 

\begin{definition} 
Let $\nu \leq 1/3$.
\begin{itemize}
	\item A set $\mb X \subset \R^d$ is \textit{$\nu$-porous on balls} from scales $\alpha_0$ to $\alpha_1$ if for every ball $\B$ of diameter $\alpha_0 < R < \alpha_1$ there is some $\mb x \in  \B$ such that $\B_{\nu R}(\mb x) \cap \mb X = \emptyset$.

	\item A set $\mb X$ is \textit{$\nu$-porous on lines from scales $\alpha_0$ to $\alpha_1$} if for all line segments $\tau$ with length $\alpha_0 < R < \alpha_1$, there is some $\mb x \in \tau$ such that $\B_{\nu R}(\mb x) \cap \mb X = \emptyset$.
\end{itemize}
\end{definition}
We are ready to state the main theorem of \cite{Cohen2023}.

\begin{theorem}\label{thm:continuous_FUP_higher_dim}
Let $\nu>0$ and assume that
\begin{itemize}
    \item $\mb X \subset [-1, 1]^d$ is $\nu$-porous on balls from scales $h$ to $1$, and 
    \item $\mb Y \subset [-h^{-1},h^{-1}]^d$ is $\nu$-porous on lines from scales $1$ to $h^{-1}$. 
\end{itemize}
Then there exist $\beta, C > 0$ depending only on $\nu$ and $d$ such that for all $f \in L^2(\R^d)$ 
\begin{equation}\label{eq:higher_dim_FUP_estimate}
    \supp \hat f \subset \mb Y\, \Longrightarrow\, \| f 1_{\mb X}\|_2 \leq C\, h^{\beta} \| f \|_2.
\end{equation}
\end{theorem}

To prove Theorem \ref{thm:discrete_FUP_higher_dim} we first show that the drawing of a Cantor set avoiding lines is porous on lines, and then prove a discrete FUP using continuous FUP. 

\subsection{Line porosity for self similar Cantor sets}

In this section $\mb x \in [0,1]^d$ denotes a point in $\R^d$ and $\mbbar x \in \T^d$ denotes the image in the torus. Similarly for sets $\mb Y \subset [0,1]^d$ and $\mbbar Y \subset \T^d$. 

\begin{definition}
Let $\mbbar X \subsetneq \T^d$ be a closed set. We say $\mbbar X$ is a self-similar Cantor set at level $M$ if $M\cdot \mbbar X = \mbbar X$, where 
\begin{equation*}
	M\cdot \mbbar X = \{M\mbbar x\, :\, \mbbar x \in \mbbar X\}.
\end{equation*}
\end{definition}
In particular, if $\mc X_k$ is a sequence of Cantor sets in $\Z_{M^k}$, then the drawing $\mbbar X \subset \T^d$ is a self-similar Cantor set. 

We first prove that if a Cantor set does not contain any lines, then it also does not contain any line segments. By a \textit{line in $\T^d$}, we mean an irreducible one dimensional closed coset. By a line segment $\bar \tau \subset \T^d$, we mean the image of a line segment in $\R^d$. 

\begin{lemma}\label{lem:cantor_not_contain_line_segments}
Let $\mbbar Y \subset \T^d$ be a self-similar Cantor set which contains no lines. Then $\mbbar Y$ also does not contain any line segments $\bar \tau$. 
\end{lemma}
\begin{proof}
Suppose by way of contradiction that $\bar \tau \subset \T^d$ is a line segment with $\bar \tau \subset \mbbar Y$. 
Let $\bar \tau$ point in direction $\mbhat v \in S^{d-1}$, and let 
\begin{equation*}
    C_{\mbhat v} = \mathrm{cl}\, \{t\bar{\mbhat v}\, :\, t \in \R\} \subset \T^d
\end{equation*}
be the closure of the geodesic based at the origin and pointing in direction $\mbhat v$. The set $C_{\mbhat v}$ is a closed subgroup which contains at least one torus line. Choose $\mb x_0$ in the interior of $\bar{\tau}$.
Select a subsequence $\{k_j\}_{j\geq 0}$ such that $M^{k_j} \mbbar x_0 \to \mbbar x_0' \in \T^d$. For any $t \in \R$, 
\begin{equation*}
    M^{k_j} (\mbbar x_0 + M^{-k_j} t \bar{\mbhat v}) \to \mbbar x_0' + t \bar{\mbhat v} \in \mbbar x_0' + C_{\mbhat v}. 
\end{equation*}
For large enough $j$, $M^{k_j}(\mbbar x_0 + M^{-k_j} t \bar{\mbhat v})\in  \mbbar Y$, and because $\mbbar Y$ is closed, we see $\mbbar x_0' + C_{\mbhat v} \subset \mbbar Y$ contradicting our assumption.
\end{proof}

We prove if a Cantor set does not contain lines then it is porous on lines. 
\begin{lemma}\label{lem:cantor_set_line_porous}
Suppose that $\mbbar Y \subset \T^d$ is a self-similar Cantor set which does not contain any lines. Then for some $\nu > 0$, $\mb Y\subset [0,1]^d$ is $\nu$-porous on lines from scales $0$ to $1$.
\end{lemma}
\begin{proof}
Let $\mbbar Y \subset \T^d$ be a Cantor set which does not contain any lines. 
We show by a compactness argument that for some $c_0 > 0$, every line segment $\bar \tau$ with length $1$ has some $\mbbar x \in \bar \tau$ such that $d(\mbbar x, \mbbar Y) \geq c_0$. Suppose by way of contradiction that this is not the case. Then there is a sequence $\bar \tau_j$ of unit line segments such that $\max_{\mbbar x \in \bar \tau_j} d(\mbbar x, \mbbar Y) \leq c_j$ where $c_j \to 0$. The space of unit line segments in $\T^d$ is compact, so there is some line segment $\bar \tau$ which is a limit of these, and it follows that $\bar \tau \subset \mbbar Y$ contradicting Lemma \ref{lem:cantor_not_contain_line_segments}.

Now let $\tau \subset \R^d$ be a line segment of length $0 < R < 1$. We would like to show there is some $\mb x \in \tau$ such that $d(\mb x, \mb Y) \geq \nu R$. The torus metric is stronger than the ambient $\R^d$ metric, so it suffices to show that there is some $\mbbar x \in \mbbar \tau$ such that $d(\mbbar x, \mbbar \tau) \geq \nu R$. Let $j\geq 0$ be the smallest integer so that $M^j R \geq 1$. Because $M^j \cdot \bar \tau$ is a line segment with length $\geq 1$, there is some $\mbbar x \in \bar \tau$ such that $d(M^j \mbbar x, \mb Y) \geq c_0$. So by self similarity $d(\mbbar x, \mb Y) \geq M^{-j} c_0 \geq \frac{c_0}{M} R$ and $\mb Y$ is $\nu$-porous on lines from scales $0$ to $1$ with $\nu = \frac{c_0}{M}$. 
\end{proof}

\subsection{Proof of Theorem \ref{thm:discrete_FUP_higher_dim}}

We roughly follow Dyatlov and Jin's argument in \cite{DyatlovJinDolgopyat}*{Proposition 5.8}. We state a general proposition which allows us to prove discrete fractal uncertainty from continuous fractal uncertainty. 
We will need the locally constant property from Fourier analysis, which we explain in a certain form now. Construct a $w \in C^{\infty}(\R^d)$ by setting $\widehat w$ to be a smooth bump function with $\widehat w = 1$ on $B_1$ and $\supp \widehat w \subset B_2$. Then 
\begin{equation*}
    |w(\mb x)| \lesssim_{m,d} \langle \mb x \rangle^{-m}\quad \text{for all $m \geq 0$}. 
\end{equation*}
Moreover, if $f \in L^2(\R^d)$ is a function with $\supp \hat f \subset B_N$ then 
\begin{equation*}
    f = f * w_N,\quad w_N(\mb x) = N^d w(N\mb x). 
\end{equation*}
In particular we have the pointwise bound 
\begin{equation}\label{eq:locally_const_pointwise_bd}
    |f(\mb x)| \lesssim N^{d/2}\| f(\cdot) w(N(\cdot - \mb x)) \|_2.
\end{equation}
Let $\mc X, \mc Y \subset \Z_N^d$ be sets. Let 
\begin{equation}\label{eq:point_sets_for_mod}
\begin{split}
    X &= N^{-1}\cdot \{\mb x \in \{0,\ldots,N-1\}^d\, :\, \mbbar x \in \mc X\}, \\ 
    Y &= \{\mb y \in \{0,\ldots,N-1\}^d\, :\, \mbbar y \in \mc Y\}.
\end{split}
\end{equation}
Here is the main proposition connecting discrete and continuous FUP. 
\begin{proposition}\label{prop:estimate_2to2_discrete}
Let $\mc X, \mc Y \subset \Z_N^d$ and $X, Y \subset \R^d$ be as above. For any $10/N < r < 1/10$ and $m > 0$ we have
\begin{equation}\label{eq:discrete_2to2_norm_estimate}
    \| 1_{\mc X}\, \mc F\, 1_{\mc Y} \|_{2\to 2} \lesssim_{d,m} \| 1_{X + B_r}\, \mc F\, 1_{Y + B_{1/4}} \|_{2\to 2} + (Nr)^{-m}.
\end{equation}
\end{proposition}
\begin{proof}
Let $u \in L^2(\Z_N^d)$ have $\supp \hat u \subset \mc Y_k$. We will construct an auxiliary function $f \in L^2(\R^d)$ based on $u$. 

Let $\chi \in C_0^{\infty}(\R^d)$ be a bump function supported in $B_{1/4}$. We can design $\chi$ so that 
\begin{align}
 |\chi^{\vee}(\mb x)| &\geq 1 && \text{for $\mb x \in [-10,10]^d$}, \\ 
 \| \chi \|_2 &\leq C_d. 
\end{align}
Let $f$ be given by 
\begin{equation*}
    \hat f(\xi) = \sum_{\mb \xi' \in \{0,\ldots, N-1\}^d} \hat u(\xi') \chi(\xi - \xi'). 
\end{equation*}
We have 
\begin{equation*}
    \| f \|_2^2 = \| u \|_2^2 \| \chi \|_2^2 \lesssim \| u\|_2^2.
\end{equation*}
Notice that for $\mb x \in X$,
\begin{equation*}
    f(\mb x) = N^{d/2}\, \chi^{\vee}(\mb x)\, u(\overline{N \mb x})
\end{equation*}
so
\begin{equation*}
    \| u\, 1_{\mc X_k} \|_{L^2(\Z_N^d)}^2 \lesssim N^{-d} \sum_{\mb x \in X} |f(\mb x)|^2. 
\end{equation*}
If we let
\begin{equation*}
    \tilde w(\mb x) = \Bigl(\sum_{\mb x' \in X} |w(N(\mb x - \mb x'))|^2\Bigr)^{1/2}
\end{equation*}
by (\ref{eq:locally_const_pointwise_bd}), $|f(\mb x)|^2 \lesssim N^d \| w(N(x-x')) f \|_2^2$, so summing over $\mb x \in X$ we find
\begin{align*}
    \sum_{\mb x \in X} |f(\mb x)|^2 &\lesssim N^d \| f\, \tilde w\|_2^2, \\ 
    \| u 1_{\mc X_k}\|_2^2 &\lesssim \| f \tilde w \|_2^2. 
\end{align*}
Using the fact that $X$ is an $N^{-1}$-separated set, 
\begin{align*}
    |\tilde w(\mb x)|^2 &= \sum_{\mb x' \in X} |w(N(\mb x - \mb x'))|^2 \lesssim_m \sum_{\mb x' \in X} (1+ N |\mb x - \mb x'|)^{-m}\\
    &\lesssim \sum_{N^{-1} \leq 2^j \leq 10} (1+N\, 2^j)^{-m}\, |X \cap B_{2^j}(\mb x)| \lesssim \sum_{2^j \geq \max(N^{-1}, d(\mb x, X))} (N2^j)^{d-m}\\
    &\lesssim (1+N\,d(\mb x, X))^{d-m}
\end{align*}
for $m$ large enough. Thus for any $r > N^{-1}$ and $m\geq 0$,
\begin{equation*}
    |\tilde w(\mb x)| \lesssim_{m,d} 1_{X + B_r}(\mb x) + (Nr)^{-m}.
\end{equation*}
Because $\supp \hat f \subset Y + B_{1/4}$, 
\begin{align*}
    \| u 1_{\mc X_k} \|_2 &\lesssim \| f\, 1_{X+B_r} \|_2 + (Nr)^{-m} \| f \|_2 \\  &\lesssim (\| 1_{X + B_r}\, \mc F\, 1_{Y+B_{1/4}}\|_{2\to 2} + (Nr)^{-m}) \| u \|_2
\end{align*}
giving (\ref{eq:discrete_2to2_norm_estimate}).
\end{proof}

Now we prove the FUP for arithmetic Cantor sets that avoid lines. 

\begin{proof}[Proof of Theorem \ref{thm:discrete_FUP_higher_dim}]
Let $\mc X_k$ and $\mc Y_k$ be a sequence of Cantor iterates such that the drawing $\mbbar Y \subset \T^d$ does not contain any lines. Let $N = M^k$. Let $X_k \subset [0,1]^d$ and $Y_k \subset [0,N]^d$ be the corresponding point sets as in (\ref{eq:point_sets_for_mod}). By choosing $r = N^{\varepsilon - 1}$ in Proposition \ref{prop:estimate_2to2_discrete}, we have for any $\varepsilon > 0$ the estimate
\begin{equation*}
    \| 1_{\mc X_k}\, \mc F\, 1_{\mc Y_k} \|_{2\to 2} \lesssim \| 1_{X_k + B_{r}}\, \mc F\, 1_{Y_k + B_{1/4}}\|_{2\to 2} + N^{-\varepsilon}.
\end{equation*}
Letting $\mb X, \mb Y \subset [0,1]^d$ be the drawings of these Cantor sets, we have
\begin{align}
    X_k \subset \mb X + [-N^{-1}, N^{-1}]^d,\quad Y_k \subset N\cdot \mb Y + [-1,1].
\end{align}
Thus
\begin{itemize}
    \item The set $\mb X$ is $\nu$-porous on balls from scales $0$ to $1$. So $X_k + B_{N^{\varepsilon-1}}$ is $\nu$-porous on balls from scales $2N^{\varepsilon-1}$ to $1$. 
    \item By Lemma \ref{lem:cantor_set_line_porous}, the set $\mb Y$ is $\nu$-porous on lines from scales $0$ to $1$. So the set $Y_k + B_{1/4}$ is $\nu$-porous on lines from scales $\sqrt{d}/4$ to $N$.
\end{itemize}
In the above, the value of $\nu$ changes from line to line.
Split up $[-N-1,N+1]^d$ into a disjoint union of $\lesssim N^{\varepsilon d}$ many cubes $Q \in \mc Q$ that have side length $N^{1-\varepsilon}$. By Theorem \ref{thm:continuous_FUP_higher_dim}, there is $\beta = \beta(\nu, d) > 0$ so that
\begin{equation*}
    \| 1_{X_k + B_r}\, \mc F\, 1_{(Y_k + B_{1/4}) \cap Q} \|_{2\to 2} \lesssim N^{-(1-\varepsilon)\beta}.
\end{equation*}
Summing this over all the boxes $Q\in \mc Q$, we have
\begin{equation*}
    \| 1_{X_k + B_r}\, \mc F\, 1_{Y_k + B_{1/4}} \|_{2\to 2} \lesssim N^{-\beta+\varepsilon (d+\beta)}. 
\end{equation*}
Choose $\varepsilon > 0$ small enough that the exponent is negative and apply Proposition \ref{prop:estimate_2to2_discrete} to obtain
\begin{equation*}
    \| 1_{\mc X_k}\, \mc F \, 1_{\mc Y_k} \|_{2\to 2} \leq CN^{-\beta'}
\end{equation*}
for some $\beta' > 0$. 
\end{proof}

\bibliographystyle{agsm}
\bibliography{references}

\end{document}